\newtheorem{theorem}{Theorem}[section]
\newtheorem{lemma}[theorem]{Lemma}
\newtheorem{corollary}[theorem]{Corollary}
\newtheorem{conjecture}[theorem]{Conjecture}
\numberwithin{equation}{section}
\let\eps=\varepsilon
\def\deltab{\delta} 
\def\rsets{{\mathcal{S}_r(n)}} 
\def\({\bigl(}   \def\){\bigr)}
\def\abs#1{\mathopen|#1\mathclose|} \let\card=\abs
\def\Abs#1{\bigl|#1\bigr|} 
\def\Hrd{{H_r(\boldsymbol{d})}}
\def\calHrd{{\mathcal{H}_r(\boldsymbol{d})}}
\def\dvec{{\boldsymbol{d}}}
\def\xvec{{\boldsymbol{x}}}
\def\yvec{{\boldsymbol{y}}}
\def\zvec{{\boldsymbol{z}}}
\def\thetavec{{\boldsymbol{\theta}}}
\def\deltavec{\boldsymbol{\delta}}
\def\gammavec{\boldsymbol{\gamma}}
\def\zerovec{\boldsymbol{0}}
\def\Xvec{\boldsymbol{X}}
\def\Zvec{\boldsymbol{Z}}
\def\W{\varGamma}
\newcommand{\medtilde}{\protect\accentset{\sim}}
\def\dfrac#1#2{\lower0.15ex\hbox{\large$\frac{#1}{#2}$}}
\def\E{{\mathbb{E}}}
\def\nicebreak{\vskip0pt plus50pt\penalty-300\vskip0pt plus50pt }
\def\st{\mathrel{:}}
\def\nfrac#1#2{{\textstyle\frac{#1}{#2}}}
\def\dfrac#1#2{\lower0.15ex\hbox{\large$\frac{#1}{#2}$}}
\let\originalleft\left
\let\originalright\right
\renewcommand{\left}{\mathopen{}\mathclose\bgroup\originalleft}
\renewcommand{\right}{\aftergroup\egroup\originalright}
\definecolor{byzantine}{rgb}{0.74, 0.2, 0.64}
\definecolor{forestgreen}{rgb}{0.13, 0.55, 0.13}
\def\V{\operatorname{\mathbb{V\!}}}
\def\deltamax{\delta_{\mathrm{max}}}
\def\diag{\operatorname{diag}}
\def\tr{\operatorname{tr}}
\def\Prob{\mathbb{P}} \let\Pr=\Prob
\def\Bin{\operatorname{Bin}}
\def\calQ{\mathcal{Q}}
\def\calR{\mathcal{R}}
\def\calD{\mathcal{D}}
\def\calB{\mathcal{B}}
\def\calT{\mathcal{T}}
\def\frakW{\mathfrak{W}}
\def\thetavec{\boldsymbol{\theta}}
\def\dvec{\boldsymbol{d}}
\def\betavec{\boldsymbol{\beta}}
\def\betavecstar{\boldsymbol{\beta}^\ast}
\def\Var{\operatorname{Var}}
\def\Cov{\operatorname{Cov}}
\def\Reals{{\mathbb{R}}}
\def\Complexes{{\mathbb{C}}}
\def\Naturals{{\mathbb{N}}}
\def\Integers{{\mathbb{Z}}}
\def\norm#1{\mathopen\|#1\mathclose\|}
\def\Norm#1{\bigl\|#1\bigr\|}
\def\maxnorm#1{\norm{#1}_{\rm max}}
\def\svec{\boldsymbol{s}}
\def\trans{^{\mathrm{t}}}
\def\Dmin{D_{\rm min}}
\def\Dmax{D_{\rm max}}
\title{Degree sequences of sufficiently dense\\ random uniform hypergraphs}
\author{
Catherine Greenhill\thanks{Research supported by the Australian Research Council, Discovery Project DP190100977.} \\
\small School of Mathematics and Statistics\\[-0.8ex]
\small UNSW Sydney\\[-0.8ex]
\small NSW 2052, Australia\\
\small \tt c.greenhill@unsw.edu.au\\
\and
Mikhail Isaev\footnotemark[\value{footnote}]\\
\small School of Mathematics\\[-0.8ex]
\small Monash University\\[-0.8ex]
\small Vic 3800, Australia\\
\small\tt mikhail.isaev@monash.edu 
\and
Tam\'{a}s Makai\footnotemark[\value{footnote}]\\
\small School of Mathematics and Statistics\\[-0.8ex]
\small UNSW Sydney\\[-0.8ex]
\small NSW 2052, Australia\\
\small \tt t.makai@unsw.edu.au\\
\and
Brendan D. McKay\footnotemark[\value{footnote}]\\
\small Department of Computing\\[-0.8ex]
\small Australian National University\\[-0.8ex]
\small Canberra, ACT 2601, Australia\\
\small\tt brendan.mckay@anu.edu.au
}
\date{16 May 2022}
\begin{document}

\maketitle

\begin{abstract}
We find an asymptotic enumeration formula for the number of simple $r$-uniform hypergraphs with a given degree sequence, when the number of edges is sufficiently large.
The formula is given in terms of the solution of a system of equations.
We give sufficient conditions on the degree
sequence which guarantee existence of a solution to this system. 
Furthermore, we solve the system
and give an explicit asymptotic formula when the degree sequence is close to regular.
This allows us to establish several properties of the degree sequence of
a random $r$-uniform hypergraph with a given number of edges.
More specifically, we compare the degree sequence of a random $r$-uniform
hypergraph with a given number edges to certain models involving sequences of binomial or
hypergeometric random variables conditioned on their sum.
\end{abstract}

\section{Introduction}\label{s:intro}

Hypergraphs are useful for modelling relationships between objects in
a complex discrete system, and can offer improvements over graph
models in areas such as ecology~\cite{golubski}, quantum computing~\cite{morimae}
and computer vision~\cite{purkait}.
A hypergraph $H=(V,E)$ consists of a finite set $V$ 
of vertices and a finite set $E$ of edges, where each edge
is a subset of the vertex set.  Here edges do not contain
repeated vertices, and there are no repeated edges.
A hypergraph is $r$-\emph{uniform} if every edge contains $r$ vertices.
We present an asymptotic enumeration formula for the number of $r$-uniform
hypergraphs with a specified degree sequence, where the degree of
a vertex is the number of edges containing it.
Our formula holds for $3\leq r\leq\frac12 n$ and
$nr^4\log n\ll d\leq \frac12\binom{n-1}{r-1}$, where $d$ is the average degree, under very weak
restrictions on how much the degrees can vary. By symmetry, the
ranges obtained by complementing the edge set and/or complementing
each edge are also covered.
Using this formula, we
establish some results on the degree sequence of a random $r$-uniform
hypergraph with a given number of edges, verifying a conjecture of
Kam{\v c}ev, Liebenau and Wormald~\cite{KLW} for our parameter range.

To be more precise, we must introduce some notation.
Let $[a]$ denote the set $\{1,2,\ldots, a\}$ for any positive integer $a$. 
For infinitely many natural numbers $n$, let
$r(n)$ satisfy $3\leq r(n)\leq n-3$ and let
$\dvec(n) = \(d_1(n),\ldots, d_n(n)\)$ be a sequence
of positive integers.  We simply write $r$ for $r(n)$, and similarly
for other notation.
We assume that for infinitely many $n$, 
\begin{equation}
\label{parity}
r \, \text{ divides } \, \sum_{j\in [n]} d_j.
\end{equation}
All asymptotics in the paper are as $n$ tends to
infinity, along values for which (\ref{parity}) holds.
Define $\calHrd$ to be the
set of simple $r$-uniform hypergraphs with vertex set
$V=\{1,2,\ldots, n\}$ and degree sequence $\dvec$.
Write $e(\dvec):= \frac{1}{r} \sum_{j\in [n]} d_j$ for the number of edges
and $d:= d(\dvec)= \frac{1}{n} \sum_{j\in [n]} d_j$ for the average degree.

Our first aim is to find an asymptotic expression for $\Hrd = |\calHrd|$ 
for degree sequences~$\dvec$ which are neither too dense nor too sparse.

Our approach to hypergraph enumeration is based on the complex-analytical method. 
The answer is expressed in terms of high-dimensional integrals  resulting from Fourier 
inversion applied to a multivariable generating function.  Then, these integrals are 
approximated using multidimensional variants of the saddle-point method; see Section~\ref{s:main-proof} for more details. In the context of combinatorial enumeration, this 
method was pioneered by McKay and Wormald in 1990~\cite{McKW90}. Since then, many other 
applications of this method have appeared; see for example~\cite{CGM,correlation-immune,MM}, 
and the many results cited in \cite{mother}.
In particular, Kuperberg, Lovett and Peled~\cite{Kuperberg} prove an asymptotic formula for the number of $r$-uniform $d$-regular hypergraphs on $n$ vertices which holds 
when the number of edges in the hypergraph and
its complement are each at least $n^c$ (which implies that $r>c$)
for some sufficiently large constant~$c$ which is not identified explicitly.

Recently, Isaev and McKay~\cite{mother} developed a general theory  based on complex 
martingales for estimating the high-dimensional integrals which arise from the 
complex-analytical method. In this paper, we apply tools from~\cite{mother} in the 
hypergraph setting.

For a survey of enumeration results for graphs with given degrees,
see Wormald~\cite{W-survey}. Here we discuss only $r$-uniform hypergraphs
with $r\geq 3$.
Dudek, Frieze, Ruci{\' n}ski and {\v S}ileikis \cite{DFRS} gave an asymptotic formula 
for the number of $d$-regular
$r$-uniform hypergraphs on $n$ vertices when $r\geq 3$ is constant, 
assuming that $d=o(n^{1/2})$. 
Building on~\cite{BG},
Blinovsky and Greenhill~\cite[Corollary~2.3]{BG2} gave an asymptotic formula for 
$\Hrd$ that holds when
the maximum degree $d_{\max}$ satisfies $r^4 d_{\max}^3 = o(nd)$.
These results were obtained using the switching method.

By adapting the `degree switching and contraction mapping' approach of~\cite{LW1,LW2},
Kam{\v c}ev, Liebenau and Wormald~\cite[Theorem~1.2]{KLW}
proved that the degree sequence of a 
randomly chosen $r$-uniform hypergraph with $m$ edges is closely related to a 
random vector with entries chosen from suitable independent binomial
distributions, conditioned on the entries of the vector having sum $nd$.
More precisely, they prove that the ratio of the probabilities of a particular
vector $\dvec$ in these two models is well-approximated by a 
simple function of $r$ and~$\dvec$.
We will restate their theorem as Theorem~\ref{thm:NAN1.2} below.
This result holds under some assumptions, namely that the degrees do not
vary too much, the edge size is not too large and the average degree is
at most a sufficiently small constant times $\frac 1r\binom{n-1}{r-1}$.
Kam{\v c}ev, Liebenau and Wormald also considered sparse degree sequences
in~\cite[Theorem~1.3]{KLW},
which subsumes the enumeration results of~\cite{BG,DFRS}.

Our second aim is to apply our enumeration formula to study the degree
sequence of random uniform hypergraphs with given degrees.  In particular,
we prove a companion result to~\cite[Theorem~1.2]{KLW} which allows larger edge
size, more edges and more variation between the degrees, when the average degree
is large enough.  Furthermore, we verify (for our range of parameters)
a conjecture made in~\cite{KLW}, showing
that vectors of independent hypergeometric random variables, conditioned on
having sum $nd$, closely match the degree sequence of a random
uniform hypergraph with $nd/r$ edges almost everywhere.

\subsection{Notation, assumptions and our general results}\label{s:assumptions}

Define the density $\lambda$ as a function of $n$, $r$ and the average degree
$d$ by
\begin{equation}\label{eq:density}
d = \lambda\, \binom{n-1}{r-1}.
\end{equation}

Write $\rsets$ to denote the set of all subsets of $[n]$ of size $r$.  
Given a vector $\betavec = (\beta_1,\ldots, \beta_n)\in\Reals^n$,
for all $W\in \rsets$ define
 \begin{equation}
\label{eq:lambdaW-def}
 	 \lambda_W(\betavec) := \frac{e^{\sum_{j\in W} \beta_j}}{1+e^{\sum_{j\in W} \beta_j}}. 
 \end{equation}
Note that $\lambda_W(\betavec)$ is the probability that the edge $W$ appears in the
\emph{$\beta$-model} for hypergraphs with given degrees,
see for example~\cite{stasi}.
Let $\lambda(\betavec)$ be the average values of the $\lambda_W(\betavec)$; that is,
\[
  \lambda(\betavec)  := \binom{n}{r}^{\!-1}\! \!\sum_{W\in \rsets} \lambda_W(\betavec).
\]
Observe that $\lambda_W(\betavec),\, \lambda(\betavec) \in (0,1)$.

\bigskip

Define the positive symmetric $n\times n$ matrix $A(\betavec)=(a_{jk})$ as follows: 
\begin{equation}\label{eq:A-def}
   a_{jk} :=  \begin{cases}
                         \,\dfrac{1}{2}\, \displaystyle\sum_{W\ni j} 
                                           \lambda_W(\betavec)(1-\lambda_W(\betavec)),
                         & \text{~for $j=k\in [n]$}; \\[2.5ex]
                         \,\dfrac{1}{2}\, \displaystyle\sum_{W\supset \{j,k\}}
                                           \!\!\lambda_W(\betavec)(1-\lambda_W(\betavec)),
                         & \text{~ for $j,k\in [n],\,\, j\neq k$}.
                  \end{cases}
\end{equation}
We use $|M|$ to denote the determinant of a matrix $M$. 

Let $\betavecstar\in\Reals^n$ be a solution to the system of equations 
\begin{equation}
	\label{exact}
	\sum_{W\ni j} \lambda_W(\betavecstar)  = d_j  \quad \text{ for $j\in [n]$.}
\end{equation}
Summing~\eqref{exact} over $j\in [n]$ gives 
\begin{equation}\label{eq:dslacomp}
 d = \frac{1}{n}\sum_{j\in [n]}d_j= \frac{r}{n} \sum_{W\in\rsets}\!\! \lambda_W(\betavecstar) 
 =\lambda(\betavecstar) \binom{n-1}{r-1}. 
\end{equation}
This shows that $\lambda(\betavecstar)$ equals the density $\lambda$ defined
in (\ref{eq:density}).  
Similarly, if we write $\lambda_W$ or $A$ without 
argument, we always mean that the argument is $\betavecstar$.

Our main enumeration result is the following.

\begin{theorem}
\label{thm:main}
Let $\dvec =\dvec(n)= (d_1,\ldots, d_n)$ be a degree sequence. 
Suppose that $r=r(n)$ satisfies $3\leq r\leq n-3$ and
\begin{equation}\label{mainineq}
    r^3 (n-r)^3 \log{n} \ll \lambda(1-\lambda) n \binom{n}{r}.
\end{equation}
Further assume that $\betavecstar=(\beta_1^\ast,\ldots, \beta_n^\ast)$ is a solution of 
\emph{(\ref{exact})} such that
\begin{equation}
\label{beta-range}
   \max_{j,k\in [n]} |\beta_j^\ast-\beta_k^\ast| = O\left(\frac{n}{r(n-r)}\right).
\end{equation}
	Let 
$\lambda_W=\lambda_W(\betavecstar)$ be defined as in \emph{(\ref{eq:lambdaW-def})},
for all $W\in \rsets$, and let
$A=A(\betavecstar)$ be defined as in \emph{(\ref{eq:A-def})}.
Then
\begin{align*}
\Hrd = \frac{r \(1+ O(\eps)\)}{2^n\, \pi^{n/2} \, |A|^{1/2}} \, \prod_{W\in \rsets} \bigl( \lambda_W^{-\lambda_W}\, (1-\lambda_W)^{-(1-\lambda_W)} \bigr)
\end{align*}
where
\[ \eps:=  \frac{r(n-r)n}{\lambda(1-\lambda)\binom{n}{r}} +
   \frac{\log^9{n}}{n^2}
  \biggl(\frac{r^3(n-r)^3}{\lambda(1-\lambda)\binom{n}{r}}\biggr)^{\!3/2} \!
   +n^{-\Omega(\log{n})} = o\( (\log n)^{-1}\).\]
The implicit constant in the $O(\eps)$ term depends only on the implicit
constant in \emph{(\ref{beta-range})}. 
\end{theorem}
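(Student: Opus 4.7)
My plan is to express $\Hrd$ as an $n$-dimensional Fourier integral and apply the complex saddle-point machinery of~\cite{mother}.

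\textbf{Integral representation and saddle point.} Coefficient extraction from the generating function $\prod_{W\in\rsets}(1+\prod_{j\in W}z_j)$, after substituting $z_j = e^{\beta_j^\ast + i\theta_j}$, yields
\[
\Hrd = \frac{e^{-\sum_j d_j \beta_j^\ast}}{(2\pi)^n} \int_{[-\pi,\pi]^n} G(\thetavec) \, d\thetavec,
\]
where $G(\thetavec) := e^{-i\sum_j d_j \theta_j} \prod_{W\in\rsets}\! \bigl(1 + e^{\sum_{j \in W}(\beta_j^\ast + i\theta_j)}\bigr)$. The condition $\nabla \log G(\zerovec)=\zerovec$ is exactly~\eqref{exact}, and the Hessian at $\thetavec=\zerovec$ equals $-2A$ with $A$ as in~\eqref{eq:A-def}. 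Combining $G(\zerovec) = \prod_W (1-\lambda_W)^{-1}$ with $e^{-\sum_j d_j \beta_j^\ast} = \prod_W \lambda_W^{-\lambda_W}(1-\lambda_W)^{\lambda_W}$ (the latter from $\sum_j d_j \beta_j^\ast = \sum_W \lambda_W \log(\lambda_W/(1-\lambda_W))$, which follows from~\eqref{exact}) produces the product $\prod_W \lambda_W^{-\lambda_W}(1-\lambda_W)^{-(1-\lambda_W)}$ in the claimed formula. The extra factor of $r$ arises because the parity condition~\eqref{parity} makes $G$ invariant under $\thetavec \mapsto \thetavec + (2\pi/r)\boldsymbol{1}$, so $[-\pi,\pi]^n$ contains $r$ translates of the saddle, each contributing equally.

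\textbf{Gaussian evaluation in a central box.} I would restrict to a box $\calB$ around $\zerovec$ (together with its $r-1$ translates) whose diameter is chosen so that the cubic Taylor remainder of $\log G$ is negligible. The assumption~\eqref{beta-range} keeps every $\lambda_W$ within a constant factor of $\lambda$, so $A$ is spectrally comparable to the explicit matrix arising in the regular case, and this combined with~\eqref{mainineq} guarantees both that $A$ is well-conditioned and that the Gaussian mass concentrates strictly inside $\calB$. On $\calB$ the approximation $G(\thetavec)\approx G(\zerovec)\,e^{-\thetavec\trans A \thetavec}$ holds with cubic error controlled by sums of $\lambda_W(1-\lambda_W)|\psi_W|^3$, where $\psi_W:=\sum_{j\in W}\theta_j$; this produces the first two terms of $\eps$. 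The resulting Gaussian integral evaluates to $\pi^{n/2}/|A|^{1/2}$, which multiplied by the $r$ translates yields the claimed factor $r/(2^n \pi^{n/2} |A|^{1/2})$.

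\textbf{Tail bound: the main obstacle.} The substantive step is
\[
\int_{[-\pi,\pi]^n \setminus \calB'} |G(\thetavec)| \, d\thetavec \;=\; n^{-\Omega(\log n)} \, G(\zerovec),
\]
where $\calB'$ is the union of the $r$ boxes around the equivalent saddles. Using the pointwise identity $|G(\thetavec)/G(\zerovec)|^2 = \prod_W\bigl(1 - 4\lambda_W(1-\lambda_W)\sin^2(\psi_W/2)\bigr)$, this reduces to an anticoncentration estimate: for $\thetavec\notin\calB'$, a positive fraction of the edge-sums $\psi_W$ must be bounded away from $0 \pmod{2\pi}$. I would split the tail into an intermediate annulus (handled by extending the quadratic approximation with an explicit quartic correction) and an outer region (handled by a Diophantine/combinatorial argument on the incidence map $\thetavec \mapsto (\psi_W)_{W\in\rsets}$: if $\thetavec$ is genuinely far from the periodicity lattice, a random $W \in \rsets$ has $\psi_W \bmod 2\pi$ spread out, giving geometric decay across the $\binom{n}{r}$ factors). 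This is where~\eqref{mainineq} gets used in its strongest form and where the techniques of~\cite{mother} must be adapted from graphs to the $r$-uniform incidence structure; the final term of $\eps$ absorbs this contribution.
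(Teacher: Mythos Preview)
Your outline is correct and follows the paper's approach almost exactly: the same integral representation with the $r$ equivalent saddles from the shift $\thetavec\mapsto\thetavec+(2\pi/r)\boldsymbol{1}$, the Gaussian evaluation in a central region via~\cite[Theorem~4.4]{mother} (restated as Theorem~4.1 in the paper), and a two-stage tail bound consisting of a quadratic modulus estimate on an intermediate shell (Lemma~2.2) together with a combinatorial argument on the outer region exploiting pairs $W_1,W_2$ with $W_1\mathbin{\triangle}W_2=\{a,b\}$ (Lemma~2.3). One point to sharpen: your phrase ``cubic error controlled by sums of $\lambda_W(1-\lambda_W)|\psi_W|^3$'' reads as a pointwise bound, which would be too weak since that sum can grow like $n^{1/2}$ on the relevant box; what actually produces the first term of~$\eps$ is that the Gaussian expectation of the cubic correction vanishes by parity and its pseudovariance is $O(nr^2/d)$, which the paper extracts via Isserlis' theorem --- this cancellation is the real content of the central estimate and is implicit in your invocation of~\cite{mother} but not in your wording.
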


The enumeration problem has two natural symmetries:  given a hypergraph, we may replace every edge by its complement, or we may take the complement of the edge set.  These symmetries show that for a given
degree sequence $\dvec$,
\begin{equation}
\Hrd = H_{n-r}(\dvec') = H_r(\medtilde\dvec) = H_{n-r}(\medtilde\dvec')
\label{H-symmetries}
\end{equation}
where
\begin{equation}
\label{eq:d-symmetries}
\begin{aligned}
\dvec' &:= \bigl(e(\dvec)-d_1,\ldots, e(\dvec)-d_n\bigr),\\
\medtilde\dvec &:= \left(\binom{n-1}{r-1}-d_1,\ldots, \binom{n-1}{r-1}-d_n\right),\\
\medtilde\dvec' &:= \left(\binom{n-1}{r}- e(\dvec) + d_1,\ldots, \binom{n-1}{r}- e(\dvec) + d_n\right).
\end{aligned}
\end{equation}
Using these symmetries, we may assume that 
\[
r\leq n/2 \quad \text{ and } \quad e(\dvec) \leq \dfrac{1}{2}\binom{n}{r}.
\]
When these inequalities are both satisfied, we say that $(r,\dvec)$ belongs to the 
\emph{first quadrant}.

The conditions in Theorem~\ref{thm:main} are invariant under these two symmetries.
It is true, but not obvious, that the  asymptotic formula for
$H_r(\dvec)$ is also invariant under these symmetries.
We prove this in Lemma~\ref{lem:symmetries} below.

\begin{lemma}
\label{lem:symmetries}
Suppose that $\betavecstar$ is a solution to \emph{(\ref{exact})}.
Let $\betavec'$, $\medtilde\betavec$, $\medtilde\betavec'$ be vectors
with entries $\beta'_j$, $\medtilde\beta_j$, $\medtilde\beta'_j$ defined in
the fourth row of Table~\ref{t:symmetries} for all $j\in [n]$.
Then $\betavec'$, $\medtilde\betavec$, $\medtilde\betavec'$ are solutions of \emph{(\ref{exact})} 
for the degree sequences $\dvec'$, $\medtilde\dvec$ and
$\medtilde\dvec'$ defined in \emph{(\ref{eq:d-symmetries})}, respectively.
Furthermore, the following relationships hold:
\begin{gather*} 
\lambda_{V\setminus W}(\betavec') = \lambda_W,\quad
   \lambda_{W}(\medtilde\betavec) = 1-\lambda_W,\quad
   \lambda_{V\setminus W}(\medtilde\betavec') = 1-\lambda_W\quad\quad
   \text{for all $W\in\rsets$};
\\[1ex]
 |A(\betavec')| = \Bigl(\frac{n-r}{r}\Bigr)^2\, |A(\betavecstar)|,\quad
   |A(\medtilde\betavec)| =  |A(\betavecstar)|,\quad
    |A(\medtilde\betavec')| = \Bigl(\frac{n-r}{r}\Bigr)^2\, |A(\betavecstar)|;
\\[1ex]
 \abs{\beta'_j-\beta'_k} = \abs{\medtilde\beta_j-\medtilde\beta_k}
= \abs{\medtilde\beta'_j-\medtilde\beta'_k} = \abs{\beta^\ast_j-\beta^\ast_k}\quad\quad
\text{for all $j,k\in[n]$}.
\end{gather*}
\end{lemma}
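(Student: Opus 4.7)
The plan is to treat all three symmetries in parallel, with direct computations from the explicit formulas for $\beta'_j$, $\medtilde\beta_j$, $\medtilde\beta'_j$ supplied by Table~\ref{t:symmetries}.  From the required identities $\lambda_{V\setminus W}(\betavec') = \lambda_W$, etc., one can read off that $\medtilde\beta_j = -\beta^\ast_j$ and that $\beta'_j$, $\medtilde\beta'_j$ are obtained from $-\beta^\ast_j$ and $\beta^\ast_j$ by shifting by a common constant (which must be $\frac{1}{n-r}\sum_k\beta^\ast_k$) so that the sum identities $\sum_{j\in V\setminus W}\beta'_j = \sum_{j\in W}\beta^\ast_j$ and $\sum_{j\in V\setminus W}\medtilde\beta'_j = -\sum_{j\in W}\beta^\ast_j$ hold for every $W\in\rsets$.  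I would first verify these three $\lambda$-identities by plugging into the logistic map in (\ref{eq:lambdaW-def}): each one collapses to the corresponding linear identity among the $\beta$-sums.

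Given the $\lambda$-identities, checking that $\betavec'$, $\medtilde\betavec$, $\medtilde\betavec'$ solve (\ref{exact}) for the correct degree sequences is a short summation.  For instance $\sum_{W\ni j}\lambda_W(\medtilde\betavec) = \binom{n-1}{r-1} - d_j = \medtilde d_j$; and, using $\sum_W\lambda_W = e(\dvec)$ from (\ref{eq:dslacomp}),
\[
\sum_{W'\ni j,\,|W'|=n-r}\lambda_{W'}(\betavec') \;=\; \sum_{W\not\ni j}\lambda_W \;=\; e(\dvec) - d_j \;=\; d'_j,
\]
with the $\medtilde\dvec'$ case combining both manipulations.  The pairwise-difference claims then follow at once, since each new vector differs from $\pm\betavecstar$ by a global constant.

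The determinant identities are the substantive content of the lemma.  The case $A(\medtilde\betavec)=A(\betavecstar)$ is immediate from $(1-\lambda_W)\lambda_W = \lambda_W(1-\lambda_W)$.  The other two cases are identical in effect: both amount to reindexing edges as $W'=V\setminus W$ with unchanged weights $\lambda_W(1-\lambda_W)$.  Writing $\vec a = (a_{11},\ldots,a_{nn})\trans$ and $T = \tfrac12\sum_W\lambda_W(1-\lambda_W) = \tfrac{1}{r}\sum_j a_{jj}$, inclusion--exclusion over edges containing or avoiding $j$ (and, for off-diagonals, $\{j,k\}$) yields the rank-two update
\[
A(\betavec') \;=\; A + T\,\mathbf{1}\mathbf{1}\trans - \mathbf{1}\vec a\trans - \vec a\mathbf{1}\trans
  \;=\; A + [\mathbf{1}\ \vec a]\begin{pmatrix}T & -1\\ -1 & 0\end{pmatrix}[\mathbf{1}\ \vec a]\trans .
\]
The combinatorial key is the row-sum identity $A\mathbf{1}=r\vec a$, which holds because each edge through $j$ contributes to exactly $r-1$ off-diagonals and one diagonal.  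This gives $A^{-1}\vec a = \tfrac1r\mathbf{1}$, hence $\vec a\trans A^{-1}\vec a = T$ and $\mathbf{1}\trans A^{-1}\vec a = \vec a\trans A^{-1}\mathbf{1} = n/r$.  The matrix determinant lemma for rank-two updates then reduces $|A(\betavec')|/|A|$ to a $2\times 2$ determinant involving the scalar $q:=\mathbf{1}\trans A^{-1}\mathbf{1}$, and a short expansion shows that $q$ cancels, leaving exactly $((n-r)/r)^2$.  The $\medtilde\betavec'$ case follows by the same argument.

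The main obstacle is precisely this last cancellation.  A priori, $|A(\betavec')|/|A|$ could depend on the scalar $q=\mathbf{1}\trans A^{-1}\mathbf{1}$, which has no clean closed form and which one has no direct control over; its disappearance is forced only by the identity $A\mathbf{1}=r\vec a$, which must be noticed in order to evaluate two of the four entries of the $2\times 2$ matrix that arises from the matrix determinant lemma.  Once that is in hand, the rest of the lemma is a sequence of routine verifications.
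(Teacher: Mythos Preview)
Your argument is correct.  The verification of the $\lambda$-identities, the degree equations, and the pairwise-difference claims proceeds exactly as in the paper.  The one substantive divergence is in the determinant identity $|A(\betavec')| = \bigl(\frac{n-r}{r}\bigr)^2|A(\betavecstar)|$.

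The paper establishes this by a conjugation rather than a rank-two update: writing $A(\betavecstar)=\sum_{W\in\rsets}\lambda_W(1-\lambda_W)\,\varXi_W$ with $\varXi_W=\chi_W\chi_W\trans$, one checks that $(I-\tfrac1rJ)\chi_W=-\chi_{V\setminus W}$ and hence $(I-\tfrac1rJ)\,\varXi_W\,(I-\tfrac1rJ)=\varXi_{V\setminus W}$ for every $W\in\rsets$.  Summing gives $(I-\tfrac1rJ)\,A(\betavecstar)\,(I-\tfrac1rJ)=A(\betavec')$, and the determinant identity follows from $|I-\tfrac1rJ|=-\frac{n-r}{r}$.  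This is shorter and gives the stronger conclusion that $A(\betavec')$ and $A(\betavecstar)$ are congruent via an explicit matrix, bypassing any need to invoke invertibility of~$A$ or to track the auxiliary scalar $q=\mathbf 1\trans A^{-1}\mathbf 1$.  Your route via the matrix determinant lemma is entirely valid---the cancellation of $q$ that you flag really does occur, forced by $A\mathbf 1=r\vec a$---but it is the more computational of the two, and the ``obstacle'' you identify is an artefact of the method rather than of the problem.
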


\medskip

For the reader's convenience, in Table~\ref{t:symmetries}
we summarise information about our parameters under these symmetries.
\medskip

\begin{table}[ht!]
\renewcommand{\arraystretch}{1.2}
\begin{center}
\begin{tabular}{|c|c|c|c|}
\hline
 $H_r(\dvec)$ & $H_{n-r}(\dvec')$ & $H_r(\medtilde\dvec)$ & $H_{n-r}(\medtilde\dvec')$ \\
\hline
$d_j$ & $e(\dvec)-d_j$ & $\binom{n-1}{r-1} - d_j$ & $\binom{n-1}{r} - e(\dvec) + d_j$\\
\hline
 $d$ & $\frac{n-r}{r} d$ & $\frac{1-\lambda}{\lambda} d$ & $\frac{(1-\lambda)}{\lambda} \frac{(n-r)}{r} d$\\
\hline
 $\beta_j^\ast$ & $\frac{1}{n-r} \left(\sum_{k\in [n]} \beta_k^\ast \right) - \beta_j^\ast$
    & $-\beta_j^\ast$ & $\beta_j^\ast - \frac{1}{n-r} \left(\sum_{k\in [n]} \beta_k^\ast \right)$\\
\hline
 $\lambda_W$ & $\lambda_{V\setminus W}(\betavec') = \lambda_W$ & $\lambda_W(\medtilde\betavec) = 1-\lambda_W$ &
           $\lambda_{V\setminus W}(\medtilde\betavec') = 1- \lambda_W$\\
\hline
$|A(\betavecstar)|$ & $\left(\frac{n-r}{r}\right)^2\, |A(\betavecstar)|$ &
 $|A(\betavecstar)|$ & $\left(\frac{n-r}{r}\right)^2\, |A(\betavecstar)|$ \\
\hline
\end{tabular}
\caption{This table shows how the degrees, average degree, solution to (\ref{exact}), values of the
lambda parameters with $W\in \rsets$, and determinant of the matrix, behave under the symmetries. 
\label{t:symmetries}}
\end{center}
\end{table}

It follows from (\ref{H-symmetries}) and Lemma~\ref{lem:symmetries} that it suffices to prove
Theorem~\ref{thm:main} when $(r,\dvec)$ belongs to the first quadrant.
In this case, using \eqref{eq:dslacomp}
the assumptions of Theorem~\ref{thm:main} become
\begin{equation}\label{eq:assumptions}
  3\le r\le \dfrac12 n,~~ nr^4 \log n \ll d \leq \dfrac{1}{2} \binom{n-1}{r-1} 
  ~~ \text{and} ~ \max_{j,k\in [n]} |\beta_j^\ast - \beta_k^\ast| = O\(r^{-1}\),
\end{equation}
and the error term becomes
\[ O\left(\frac{nr^2}{d} + \frac{r^{6} n \log^9 n}{d^{3/2}} + n^{-\Omega(\log n)}\right).\]
Here we use the fact that $\lambda(1-\lambda)\binom{n}{r}$ is a lower bound on the number of edges
of any hypergraph in $\mathcal{H}_r(\dvec)$ and its complement.
The following lemma provides sufficient conditions on $r$ and $\dvec$ which guarantee
the existence of solutions to (\ref{exact}).

\bigskip

\begin{lemma}
\label{lem:degree-sufficient} 
Let $(r,\dvec)$ belong to the first quadrant.
Assume that there exists $\varDelta \geq 0$  such that for all $j \in [n]$,
\[
	d e^{-\varDelta/r} \leq d_j \leq d e^{\varDelta/r}.
\]
Further, assume that one of the following two conditions hold:
\begin{itemize}\itemsep=0pt
\item[\emph{(i)}] $\varDelta \leq \varDelta_0$ for some sufficiently small constant $\varDelta_0>0$;
	\item [\emph{(ii)}] $ r d = o(1) \binom{n-1}{r-1}$, $r = o(n)$, and $\varDelta = \Theta(1)$.
\end{itemize}
Then  there exists  $\betavecstar$ satisfying \emph{(\ref{exact})} such that\/
$
	\max_{j,k\in [n]} |\beta^\ast_j-\beta^\ast_k|= O(\varDelta/r). 
$
\end{lemma}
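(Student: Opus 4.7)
My plan is to prove existence of $\betavecstar$ via a Banach-fixed-point argument anchored at the uniform vector $\beta_0\mathbf{1}$, where $\beta_0:=\tfrac{1}{r}\log\tfrac{\lambda}{1-\lambda}$. At this point every $\lambda_W$ equals $\lambda$, so $F_j(\beta_0\mathbf{1}):=\sum_{W\ni j}\lambda_W(\beta_0\mathbf{1})=d$ for every $j\in[n]$, and the Jacobian of $F$ at $\beta_0\mathbf{1}$ is $2A_0=\lambda(1-\lambda)(aI+bJ)$ with $a:=\binom{n-2}{r-1}$ and $b:=\binom{n-2}{r-2}$; on the mean-zero subspace $V:=\{v\in\Reals^n:\mathbf{1}^t v=0\}$ this Jacobian acts as multiplication by $\lambda(1-\lambda)a$. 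Writing $\betavec=(\beta_0+\gamma)\mathbf{1}+\eta$ with $\eta\in V$ and $\gamma\in\Reals$, I would split \eqref{exact} into the pair-wise system in $\eta$ (solved for each fixed~$\gamma$ by contraction in~$V$) and the scalar total-degree equation $\sum_j F_j=nd$ (solved for $\gamma$ by the intermediate value theorem).

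For the contraction, define $\tilde T_\gamma:V\to V$ sending $\eta$ to the projection onto~$V$ of $(2A_0)^{-1}\bigl(\dvec-F((\beta_0+\gamma)\mathbf{1}+\eta)+2A_0\eta\bigr)$. Since $\mathbf{1}^t(\dvec-d\mathbf{1})=0$, the linear piece satisfies $\tilde T_0(\zerovec)_j=(d_j-d)/(\lambda(1-\lambda)a)$ with $\|\tilde T_0(\zerovec)\|_\infty=O(\Delta/r)$, using $|d_j-d|\leq d(e^{\Delta/r}-1)$ together with $\lambda\leq\tfrac12$ and $r\leq\tfrac{n}{2}$. The critical step is to bound the nonlinear remainder $R(\eta):=F(\beta_0\mathbf{1}+\eta)-F(\beta_0\mathbf{1})-2A_0\eta$ sharply; a direct $\|\cdot\|_\infty$ bound loses a factor of~$r$ (the worst-case $|s_W(\eta)|\leq r\|\eta\|_\infty$ is tight), so I would instead estimate pair differences via the bijection $W\leftrightarrow W':=(W\setminus\{j\})\cup\{k\}$ between $\{W\in\rsets:j\in W,\,k\notin W\}$ and $\{W\in\rsets:k\in W,\,j\notin W\}$, together with the mean-value theorem applied to $\sigma(x):=e^x/(1+e^x)$:
\[
R_j(\eta)-R_k(\eta)=\sum_{\substack{W\in\rsets\\ j\in W,\,k\notin W}}\bigl[\sigma'(r\beta_0+\xi_W)-\sigma'(r\beta_0)\bigr](\eta_j-\eta_k),
\]
where $\xi_W$ lies between $s_W(\eta)$ and $s_{W'}(\eta)$ and $|\xi_W|\leq r\|\eta\|_\infty$. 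The sharp bound $|\sigma'(r\beta_0+\xi)-\sigma'(r\beta_0)|=O(\lambda(1-\lambda)|\xi|)$ on the bounded interval $|\xi|=O(1)$ then yields $|R_j(\eta)-R_k(\eta)|=O\bigl(\lambda(1-\lambda)\,a\,r\|\eta\|_\infty\bigr)|\eta_j-\eta_k|$ on the ball $B_\rho:=\{\eta\in V:\|\eta\|_\infty\leq\rho\}$ with $\rho=c\Delta/r$. Applying $(2A_0)^{-1}$ gives the contraction constant $O(r\rho)=O(c\Delta)$, which under condition~(i) is $<\tfrac12$ provided $\Delta_0$ is small enough, so Banach's theorem produces a unique $\eta^*(\gamma)\in B_\rho$ at which $\dvec-F((\beta_0+\gamma)\mathbf{1}+\eta^*(\gamma))$ is a constant vector.

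The scalar shift $\gamma^*$ is then determined by the total-degree equation $\sum_j F_j((\beta_0+\gamma^*)\mathbf{1}+\eta^*(\gamma^*))=nd$, which has a unique solution $\gamma^*=O(\Delta/r)$ by strict monotonicity of the LHS in $\gamma$ (its total derivative equals $r^2\sum_W\sigma'(\cdot)>0$, the chain-rule contribution from $\eta^*(\gamma)\in V$ vanishing because $(2A\mathbf{1})_k$ is independent of~$k$) and by the IVT. Since $\dvec-F$ is constant (from the fixed-point property) with zero sum (from the total-degree condition), it vanishes, so $\betavec^*:=(\beta_0+\gamma^*)\mathbf{1}+\eta^*(\gamma^*)$ satisfies \eqref{exact}, with $\max_{j,k}|\beta_j^*-\beta_k^*|=\max_{j,k}|\eta_j^*-\eta_k^*|\leq 2\|\eta^*\|_\infty\leq 2\rho=O(\Delta/r)$.

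For case~(ii), where $r\lambda=o(1)$, $r=o(n)$, and $\Delta=\Theta(1)$, the same scheme applies in the multiplicative parametrization $x_j:=e^{\beta_j-\beta_0}$: since $\sigma(x)=e^x(1+o(1))$ in the sparse range $e^x\ll 1$, \eqref{exact} becomes the monotone positive system $(1+o(1))\sum_{W\ni j}\prod_{k\in W}x_k=d_j/\bigl(\lambda\binom{n-1}{r-1}\bigr)$, to which a Brouwer-type fixed-point argument on the box $\{\xvec:e^{-c\Delta/r}\leq x_j\leq e^{c\Delta/r}\}$ applies, with the multiplicative spread of the $d_j$ matching the spread imposed on $\xvec$. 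The main obstacle throughout is the sharp pair-norm control of $R$ in the contraction step: without the telescoping cancellation from the bijection $W\leftrightarrow W'$ (which crucially uses the identity $s_W-s_{W'}=\eta_j-\eta_k$, independent of~$W$), the self-map condition would force $\Delta r=O(1)$ rather than $\Delta=O(1)$, failing for large~$r$.
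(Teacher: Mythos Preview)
Your approach differs substantively from the paper's. The paper invokes a Newton--Kantorovich style lemma (Lemma~\ref{l:Kowa}): it fixes a starting point $\betavec^{(0)}$, bounds $\norm{\Psi(\betavec^{(0)})}_\infty$, and then bounds $\norm{J^{-1}(\betavec)}_\infty$ uniformly on the resulting ball using the heavy matrix estimate of Lemma~\ref{lem:diaggeneral-r}. For case~(i) the starting point is your $\beta_0\mathbf{1}$; for case~(ii) it is the explicit approximate solution $\beta_j^{(0)}=\log d_j-\tfrac1r\log S$, which is shown to satisfy \eqref{exact} up to $o(d/r)$. Your Banach scheme for case~(i), with the pair-bijection bound on $R_j(\eta)-R_k(\eta)$, is more elementary: it sidesteps the Jacobian-inverse machinery and gives the contraction constant $O(r\rho)=O(\varDelta)$ directly. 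That is a genuine gain.

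There are, however, two gaps. First, in the scalar step your claim that the chain-rule contribution from $\eta^*(\gamma)$ vanishes is incorrect: the derivative of $\sum_j F_j$ involves the actual Jacobian $2A(\betavec)$ at $\betavec=(\beta_0+\gamma)\mathbf{1}+\eta^*(\gamma)$, and $(2A(\betavec)\mathbf{1})_k=r\sum_{W\ni k}\lambda_W(1-\lambda_W)$ is \emph{not} constant in $k$ once $\eta^*\ne\zerovec$. The argument is salvageable (the defect is $O(r\rho)$ relative to the main term, so approximate monotonicity plus an estimate $\abs{\sum_jF_j-nd}=O(ndr^2\rho^2)$ at $\gamma=0$ still yields $\gamma^*=O(\varDelta/r)$ via the IVT), but as written it is wrong. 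Second, your treatment of case~(ii) is not a proof. When $\varDelta=\Theta(1)$ your contraction constant $O(\varDelta)$ is no longer small, so the case~(i) argument does not transfer; the paper compensates by starting at a point that is already an $o(r^{-1})$-approximate solution. Your multiplicative reformulation and appeal to Brouwer on the box $\{e^{-c\varDelta/r}\le x_j\le e^{c\varDelta/r}\}$ need an explicit self-map verification and control of the $1+o(1)$ correction, neither of which you supply; in particular the right-hand side you wrote, $d_j/\bigl(\lambda\binom{n-1}{r-1}\bigr)=d_j/d$, is off by the factor $\binom{n-1}{r-1}$.
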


Uniqueness is a feature of similar situations~\cite[Section 3.3.4]{Bishop}, but we have not found
a proof of uniqueness in our case in the literature.  For completeness we provide
a short proof. 

\begin{lemma}\label{lem:unique}
For a given degree sequence $\dvec$, the solution $\betavecstar$ to \eqref{exact}
 is unique if it exists.
\end{lemma}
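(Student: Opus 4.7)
The plan is to realise the system \eqref{exact} as the critical point equation of a strictly convex function on $\Reals^n$, from which uniqueness follows immediately.

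First I would introduce the potential function
\[
   F(\betavec) := \sum_{W\in\rsets} \log\bigl(1+e^{\sum_{j\in W}\beta_j}\bigr) - \sum_{j\in[n]} d_j\,\beta_j.
\]
A direct differentiation gives
$\partial F/\partial \beta_j = \sum_{W\ni j}\lambda_W(\betavec)-d_j$,
so the equations \eqref{exact} are exactly $\nabla F(\betavecstar)=\zerovec$. A second differentiation shows that the Hessian of $F$ equals $2A(\betavec)$, where $A(\betavec)$ is the matrix defined in \eqref{eq:A-def} (with the sums of $\lambda_W(1-\lambda_W)$ on the diagonal and over $W\supset\{j,k\}$ off the diagonal).

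Next I would check that $A(\betavec)$ is positive definite for every $\betavec\in\Reals^n$. For any $\xvec\in\Reals^n$, collecting contributions edge-by-edge gives the identity
\[
   \xvec\trans A(\betavec)\,\xvec = \tfrac12 \sum_{W\in\rsets} \lambda_W(\betavec)\bigl(1-\lambda_W(\betavec)\bigr)\Bigl(\sum_{j\in W}x_j\Bigr)^{\!2}.
\]
Since $\lambda_W(\betavec)\in(0,1)$ for every $W$, the right-hand side is zero only if $\sum_{j\in W}x_j=0$ for all $W\in\rsets$. Comparing any two $r$-subsets that differ in exactly one element shows that $x_j=x_k$ for all $j,k\in[n]$; combining this with $\sum_{j\in W}x_j=0$ then forces $\xvec=\zerovec$ (using $r\geq 1$ and $n\geq r+1$, which holds because $r\leq n-3$). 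Hence $A(\betavec)\succ 0$ and $F$ is strictly convex on $\Reals^n$.

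Finally, a strictly convex $C^2$ function on $\Reals^n$ has at most one critical point: if $\betavecstar$ and $\betavecstar{}'$ were two distinct solutions, the restriction $g(t):=F\bigl(\betavecstar+t(\betavecstar{}'-\betavecstar)\bigr)$ would be strictly convex in $t$, yet satisfy $g'(0)=g'(1)=0$, contradicting the strict monotonicity of $g'$. This proves the lemma. There is no real obstacle here; the only point that requires a moment's thought is the non-degeneracy of the quadratic form $\xvec\trans A(\betavec)\xvec$, which reduces to the elementary observation that the characteristic vectors of $r$-subsets span $\Reals^n$ whenever $2\le r\le n-1$.
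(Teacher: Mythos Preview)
Your proof is correct and takes a genuinely different route from the paper's. The paper argues by contradiction in the \emph{probability} coordinates: assuming two solutions $\betavec',\betavec''$, it interpolates linearly between the corresponding edge probabilities, $\xi_W(y)=(1-y)\lambda_W(\betavec')+y\lambda_W(\betavec'')$, and shows that the one-variable entropy $S(y)=\sum_W \bigl(-\xi_W\log\xi_W-(1-\xi_W)\log(1-\xi_W)\bigr)$ is strictly concave yet has $S'(0)=S'(1)=0$, which is impossible. Your argument works instead in the \emph{parameter} coordinates, recognising~\eqref{exact} as the first-order condition for the log-partition function $F(\betavec)=\sum_W\log(1+e^{\sum_{j\in W}\beta_j})-\sum_j d_j\beta_j$ and checking that its Hessian $2A(\betavec)$ is positive definite. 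The two arguments are convex duals of one another: your $F$ is precisely $\log\bigl((2\pi)^n P_r(\betavec)\bigr)$ from~\eqref{eq:Prb-def}, so your approach dovetails naturally with the saddle-point set-up of Section~\ref{s:main-proof} and reuses the quadratic-form identity for $A(\betavec)$ that already appears later in the paper. The paper's approach has the minor advantage of reducing everything to a single real variable and avoiding any discussion of the rank of $A(\betavec)$, but yours is arguably the more standard exponential-family argument and is equally short.
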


Even though~\eqref{exact} doesn't have an explicit solution in general,
we can evaluate the formula
in Theorem~\ref{thm:main} accurately if we have a sufficiently
precise estimate of~$\betavecstar$.
Stasi, Sadeghi, Rinaldo, Petrovi{\' c} and Fienberg~\cite{stasi}
stated without proof a generalization of an algorithm of \cite{Chat2011} that gives geometric
convergence to $\betavecstar$ if it exists.
Though we didn't use the iteration from~\cite{stasi}, we will demonstrate how
the precision to which an estimate of~$\betavecstar$ satisfies~\eqref{exact}
can be used to validate the corresponding estimate of $H_r(\dvec)$.
Our example will be degree sequences that are not far from regular,
which will allow us to investigate the degree sequences of random hypergraphs.

For $j\in[n]$ define $\delta_j:=d_j-d$.
Define $\deltavec:=(\delta_1,\ldots,\delta_n)$ 
and $\deltamax:=\max\{\norm{\deltavec}_\infty,1\}$.
Also define $R_t:=\sum_{j=1}^n \delta_j\trans$ for $t\ge 0$ and note that
$R_1=0$.

Recall the definition of $\lambda$ from (\ref{eq:density}).
We will find it convenient to write some quantities in terms of the
parameter~$Q$, which is invariant under the symmetries of~\eqref{H-symmetries}:
\[
     Q := (1-\lambda)(n-r)\,d
         = \lambda(1-\lambda)\,\frac{r(n-r)}n\binom nr.
\]
We continue to use the error term of Theorem~\ref{thm:main},
which in terms of $Q$ is
\begin{equation}
\label{eq:eps}
     \eps  = \frac{r^2(n-r)^2}{Q} + \frac{r^6(n-r)^6\,\log^9 n}{n^{7/2}Q^{3/2}}
           + n^{-\Omega(\log n)}.
\end{equation}
Our criterion for being ``near-regular'' is 
\begin{equation}\label{nearreg}
  \deltamax = O(Q^{3/5} n^{-3/5} ),
\end{equation}
which in the first quadrant is equivalent to $\deltamax = O(d^{3/5})$.

\medskip

\begin{theorem}\label{thm:nearreg}
  If\/ $3\leq r\leq n-3$ and assumptions~\eqref{mainineq} and~\eqref{nearreg}
  hold, then
  \begin{align*}
 \Hrd &=\biggl(\frac{r(n-r)(n-1)^{n-1}}{2^n\, \pi^n\, Q^n}\biggr)^{\! 1/2}
\( \lambda^\lambda (1-\lambda)^{1-\lambda} )^{-\binom nr} \\
 &{\qquad}\times
 \exp\biggl( -\frac{(n-1)\,R_2}{2Q}  + \frac{n^2\, R_2}{4Q^2}
    + \frac{(1-2\lambda)(n-2r)n\,R_3}{6Q^2} - \frac{n^3\,R_4}{12Q^3} + O(\hat\eps)
 \biggr),
\end{align*}
where $\hat\eps:=\eps+ \deltamax n^{3/5}Q^{-3/5}$ and $\eps$ is defined in \emph{(\ref{eq:eps})}.
\end{theorem}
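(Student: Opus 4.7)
The strategy is to apply Theorem~\ref{thm:main} and evaluate its formula explicitly in the near-regular regime. This requires a sufficiently accurate explicit ansatz $\hat\betavec$ approximating the solution $\betavecstar$ of~\eqref{exact}, together with a Taylor expansion of the determinant and entropy factors in the resulting formula.

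Set $\beta_0 := r^{-1}\log(\lambda/(1-\lambda))$, the common value of $\beta^\ast_j$ in the strictly regular case. Writing $\beta^\ast_j = \beta_0 + \gamma_j$ and Taylor expanding $\sigma(x_0+y) := e^{x_0+y}/(1+e^{x_0+y})$ around $x_0 := r\beta_0$ in the variable $y := \sum_{k\in W}\gamma_k$, substitution into~\eqref{exact} yields at leading order $\delta_j = \lambda(1-\lambda)\binom{n-2}{r-1}\gamma_j = Q\gamma_j/(n-1)$, so $\gamma_j \approx (n-1)\delta_j/Q$. One then refines this to an ansatz $\hat\beta_j$ consisting of a constant plus linear and quadratic corrections in $\delta_j$ that match~\eqref{exact} up to a small residual. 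Existence of $\betavecstar$ in this regime follows from Lemma~\ref{lem:degree-sufficient}(i) (the smallness condition on the degree spread is implied by~\eqref{nearreg} together with the first-quadrant assumptions), and uniqueness from Lemma~\ref{lem:unique}. The Jacobian of the left-hand side of~\eqref{exact} equals $2A$, which has smallest eigenvalue of order $Q/(n-1)$ in this regime; combined with a bound on the residual at $\hat\betavec$, this forces $\|\hat\betavec - \betavecstar\|_\infty$ to be small enough that substituting $\hat\betavec$ for $\betavecstar$ in the formula of Theorem~\ref{thm:main} introduces only an error absorbed by $O(\hat\eps)$.

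Next, evaluate the formula. For the determinant, in the strictly regular case $A = (a-b)I + bJ$ with $a-b = Q/(2(n-1))$ and $a + (n-1)b = rQ/(2(n-r))$, so $|A|_{\text{reg}} = rQ^n/\bigl(2^n(n-1)^{n-1}(n-r)\bigr)$. Combining with the prefactor $r/(2^n\pi^{n/2}|A|^{1/2})$ of Theorem~\ref{thm:main} exactly reproduces the claimed factor $\bigl(r(n-r)(n-1)^{n-1}/(2^n\pi^n Q^n)\bigr)^{1/2}$, while near-regular corrections to $|A|$ contribute further terms to the exponent. For the entropy product, write $\prod_W\lambda_W^{-\lambda_W}(1-\lambda_W)^{-(1-\lambda_W)} = \exp\bigl(\sum_W H(\lambda_W)\bigr)$ with $H(x) := -x\log x - (1-x)\log(1-x)$; expand $\lambda_W = \lambda + \lambda(1-\lambda)\sigma_W + \tfrac12\lambda(1-\lambda)(1-2\lambda)\sigma_W^2 + \cdots$ in $\sigma_W := \sum_{k\in W}\gamma_k$; insert into the Taylor series of $H$ around $\lambda$; sum over $W\in\rsets$ using the identities $\sum_{W\ni j}1 = \binom{n-1}{r-1}$, $\sum_{W\supseteq\{j,k\}}1 = \binom{n-2}{r-2}$ (and analogous ones for triples); and group the result by the power sums $R_t := \sum_j\delta_j^t$. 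Combining with the determinant correction produces exactly $-(n-1)R_2/(2Q) + n^2R_2/(4Q^2) + (1-2\lambda)(n-2r)nR_3/(6Q^2) - n^3R_4/(12Q^3)$ in the exponent.

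The main obstacle is the bookkeeping in the expansions above: each coefficient of $R_t$ collects contributions from several orders of the Taylor expansions of $H$ and of $\sigma$, together with corrections to $|A|$ beyond the regular leading order. Showing that these contributions combine into exactly the four claimed coefficients requires careful matching, in particular of the two $R_2$-terms arising from different sources. The near-regular hypothesis~\eqref{nearreg} is calibrated precisely so that the truncation error past $R_4$, together with the transfer error from $\hat\betavec$ to $\betavecstar$, both fit into $\hat\eps = \eps + \deltamax n^{3/5}Q^{-3/5}$.
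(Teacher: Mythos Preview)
Your approach mirrors the paper's: work in the first quadrant, produce an explicit ansatz for $\betavecstar$ (the paper's Lemma~\ref{betavalue}), expand the entropy product (Lemma~\ref{LL}) and the determinant (Lemma~\ref{detvalue}) around the regular case, and assemble the pieces (Corollary~\ref{cor:nearreg}). Two points need tightening.

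First, the ansatz in Lemma~\ref{betavalue} is carried to \emph{cubic} order in $\delta_j$ (together with an $R_2$-dependent constant), not merely linear and quadratic as you write; this extra order is what makes the residual $\norm{\varPhi(\bar\gammavec)-\deltavec}_\infty$ small enough that, after the Newton--Kantorovich step (Lemma~\ref{l:Kowa}), the transfer error feeds into $O(\hat\eps)$ and the $R_3$, $R_4$ coefficients emerge correctly in Lemma~\ref{LL}.

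Second, you invoke Lemma~\ref{lem:degree-sufficient}(i) and other first-quadrant estimates, but the theorem is stated for all $3\le r\le n-3$ and all $\lambda\in(0,1)$. The paper closes this by verifying that once the first-quadrant formula of Corollary~\ref{cor:nearreg} is rewritten in terms of the symmetric quantity $Q$, it is invariant under the two symmetries of Lemma~\ref{lem:symmetries} (in particular $R_3\mapsto -R_3$ and $(1-2\lambda)(n-2r)\mapsto -(1-2\lambda)(n-2r)$ cancel), so the first-quadrant computation suffices. Your outline omits this reduction.
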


\subsection{Degree sequences of random uniform hypergraphs}\label{ss:near-reg}

Assumption~\eqref{nearreg} is weak enough to include
the degree sequences of random hypergraphs with high probability.
Following the notation of Kam\v cev, Liebenau and Wormald~\cite{KLW},
we define three probability spaces of integer vectors.
Formulas will be given in Section~\ref{s:degree-models}.
\begin{itemize}
  \item $\calD_r(n,m)$ is the probability space of degree sequences of
       uniformly random $r$-uniform hypergraphs with $n$ vertices and
       $m$ edges.
  \item $\calB_r(n,m)$ is the result of conditioning $n$ independent
      binomial variables $\Bin(\binom{n-1}{r-1},p)$ on having sum~$nd$.
      (This distribution is independent of~$p$.)
   \item Note that each component of $\calD_r(n,m)$ has a hypergeometric
      distribution. $\calT_r(n,m)$ is the result of conditioning $n$ independent
      copies of that distribution on having sum $nd$.
\end{itemize}

The most important previous result on the near-regular case was obtained
by Kam\v cev, Liebenau and Wormald~\cite{KLW}. All the overlap between
\cite[Theorem~1.2]{KLW} and Theorem~\ref{thm:main} occurs in
Theorem~\ref{thm:nearreg}, so we restate their theorem here.

\begin{theorem}[{\cite[Theorem.~1.2]{KLW}}]\label{thm:NAN1.2}
Fix $\varphi\in(\frac49,\frac12)$.
For all sufficiently small $c>0$ and every $C>0$, suppose that $3\le r<c n^{1/4}/\log n$,
$r^3 d^{1-3\varphi}<c$ and $\log^C n \ll d < \frac cr\binom{n-1}{r-1}$.
Let $\dvec$ be a degree sequence with mean $d$ and $\deltamax\le d^{1-\varphi}$.
Then
\[
   \Prob_{\calD_r(n,m)}(\dvec) = \Prob_{\calB_r(n,m)}(\dvec) \,
      \exp\biggl(  \frac{r-1}{2} - \frac{(r-1)R_2}{2(1-\lambda)(n-r)d} 
          + O(\eta) \biggr),
\]
where 
\begin{align*}
 \eta &:= \begin{cases}
  \,\displaystyle\frac{\log^2 n}{\sqrt{n}} + \frac{d^{2-4\varphi}}{n} + d^{1-3\varphi}, & \text{ if $r=3$;}\\[2ex]
  \,\displaystyle\frac{r^2\log^2 n}{\sqrt{n}} + (\lambda n + r)r^2 d^{1-3\varphi}, & \text{ if $r\geq 4$.}
\end{cases}\end{align*}
\end{theorem}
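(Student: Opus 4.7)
The plan is to derive the ratio $\Prob_{\calD_r(n,m)}(\dvec)/\Prob_{\calB_r(n,m)}(\dvec)$ by computing each probability explicitly. For the numerator, the hypothesis $\deltamax\le d^{1-\varphi}$ together with the parameter range implies $\deltamax=O(d^{3/5})=O(Q^{3/5}n^{-3/5})$, so Theorem~\ref{thm:nearreg} provides a closed formula for $H_r(\dvec)$; dividing by $\binom{N}{m}$ with $N=\binom{n}{r}$, estimated via Stirling, cancels the large factor $(\lambda^\lambda(1-\lambda)^{1-\lambda})^{-N}$. For the denominator, take $p=\lambda$ (the result is independent of $p$) and write
\[
   \Prob_{\calB_r(n,m)}(\dvec) = \frac{\lambda^{nd}(1-\lambda)^{nM-nd}\,\prod_{j=1}^n\binom{M}{d_j}}{\Prob[\Bin(nM,\lambda)=nd]},\qquad M:=\binom{n-1}{r-1},
\]
then estimate the denominator by a local central limit theorem and each $\binom{M}{d_j}$ by Stirling.

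The key analytic step is expanding $\log\binom{M}{d_j}=MH(\lambda+\delta_j/M)-\tfrac12\log(2\pi d_j(M-d_j)/M)+O(d^{-1})$, where $H(p)=-p\log p-(1-p)\log(1-p)$, via Taylor expansion in $\delta_j/M$ to fourth order. The linear term vanishes because $\sum_j\delta_j=0$, while the quadratic, cubic and quartic contributions produce exactly the power sums $R_2,R_3,R_4$ already appearing in Theorem~\ref{thm:nearreg}. Using $M\lambda(1-\lambda)=Q/(n-r)$, the quadratic contribution to $\sum_j\log\binom{M}{d_j}$ is $-(n-r)R_2/(2Q)$; since this enters the denominator of the ratio, it contributes $+(n-r)R_2/(2Q)$ to $\log\bigl(\Prob_{\calD_r}/\Prob_{\calB_r}\bigr)$, which combines with the $-(n-1)R_2/(2Q)$ coming from Theorem~\ref{thm:nearreg} to yield exactly $-(r-1)R_2/(2(1-\lambda)(n-r)d)$. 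The remaining non-exponential prefactors collapse to $\bigl((n-1)/(n-r)\bigr)^{(n-1)/2}=\exp\bigl(\tfrac{r-1}{2}+O(r^2/n)\bigr)$, producing the explicit constant in the target formula. The $R_3$- and $R_4$-terms in the two expansions match up to residuals of order $r^2R_3/Q^2$ and $n^3R_4/Q^3$, which under $\deltamax\le d^{1-\varphi}$ are absorbed into $O(\eta)$.

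The principal obstacle is obtaining a local CLT estimate for $\Prob[\Bin(nM,\lambda)=nd]$ of sufficient precision to recover the $r^2\log^2 n/\sqrt n$ term in $\eta$: the naive LCLT gives only relative error $O(1/\sqrt{nM})$, so one needs an Edgeworth-type refinement with careful lattice correction as $r$ varies, plus a separate argument for $r=3$ where the interaction between the edge density and the near-regular tolerance $d^{1-3\varphi}$ behaves differently. Everything else reduces to routine Stirling bookkeeping on the small scale $\deltamax/d\le d^{-\varphi}$, whose $k$-th order remainders control the $R_k$ error estimates.
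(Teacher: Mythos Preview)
There is a fundamental gap: Theorem~\ref{thm:NAN1.2} is not proved in this paper at all.  It is quoted verbatim from Kam\v cev, Liebenau and Wormald~\cite{KLW}, whose proof uses the degree-switching and contraction-mapping technique of~\cite{LW1,LW2}, not the complex-analytic saddle-point method developed here.  Your plan cannot recover the theorem as stated because its key input, Theorem~\ref{thm:nearreg}, requires assumption~\eqref{mainineq}, which in the first quadrant reads $d\gg n r^4\log n$.  The hypotheses of Theorem~\ref{thm:NAN1.2} only demand $d\gg\log^C n$, so for the entire range $\log^C n\ll d\le n r^4\log n$ (which is non-empty already for $r=3$) Theorem~\ref{thm:nearreg} simply does not apply and you have no formula for $H_r(\dvec)$.

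What you have actually sketched is the paper's proof of its \emph{own} companion result, Theorem~\ref{thm:BvsD}: compare your outline with the two-line proof given there, which says precisely ``use Theorem~\ref{thm:nearreg} for $H_r(\dvec)$ and Lemma~\ref{binstirling} for the other factors''.  That theorem carries the stronger density hypothesis $d\gg r^4 n\log n$ and produces the prefactor $\bigl((n-1)/(n-r)\bigr)^{(n-1)/2}$ rather than $e^{(r-1)/2}$; the paper remarks that these agree only when $r=o(n^{1/2})$.  Your identification of the ``principal obstacle'' as an Edgeworth-type LCLT for the binomial is therefore misplaced: the binomial LCLT is routine (handled by Lemma~\ref{binstirling}), and the genuine obstruction to reaching the small-$d$ regime of~\cite{KLW} from this paper's tools is that the integral estimate behind Theorem~\ref{thm:main} breaks down once~\eqref{mainineq} fails.
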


The conditions of Theorem~\ref{thm:NAN1.2} allow for much lower
average degree than Theorem~\ref{thm:main},
but at the cost of stricter upper bounds on the edge size, the number of
edges, and the variation between the degrees.

As can be seen, the relation between $\calD_r(n,m)$ and $\calB_r(n,m)$
becomes rapidly more distant as~$r$ increases.  Theorem~\ref{thm:nearreg} would allow
a statement for all~$r$, but we prefer a statement that is more easily
compared to Theorem~\ref{thm:NAN1.2}.
Note that our formula agrees with the expression given in
Theorem~\ref{thm:NAN1.2} if $r=o(n^{1/2})$, since then
 $((n-1)/(n-r))^{(n-1)/2} \sim e^{(r-1)/2}$.

\begin{theorem}\label{thm:BvsD}
Suppose that $3\le r\le cn$ and $0<\lambda < c$ for some fixed $c<1$.
If $d\gg r^4n\log n$ and $\deltamax=O(d^{3/5})$
then
\[
   \Prob_{\calD_r(n,m)}(\dvec) = \Prob_{\calB_r(n,m)}(\dvec)\;
      \Bigl(\frac{n-1}{n-r}\Bigr)^{\!(n-1)/2}
       \!\! \exp\biggl( - \frac{(r-1)R_2}{2(1-\lambda)(n-r)d} 
       + O(\bar\eps) \biggr).
\]
where $\bar\eps:=\eps+ \deltamax d^{-3/5}$ and $\eps$ is defined in \emph{(\ref{eq:eps})}.
\end{theorem}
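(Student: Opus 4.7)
The plan is to compute both probabilities explicitly and take their ratio. Since $\calD_r(n,m)$ is uniform over the $\binom{\binom{n}{r}}{m}$ hypergraphs with $m = nd/r$ edges, we have $\Prob_{\calD_r(n,m)}(\dvec) = \Hrd/\binom{\binom{n}{r}}{m}$. For the binomial model, since the sum of independent $\Bin(N,p)$ variables (with $N := \binom{n-1}{r-1}$) is $\Bin(nN,p)$, the factors involving $p$ cancel in the conditional probability, giving
\[
\Prob_{\calB_r(n,m)}(\dvec) = \frac{\prod_{j\in [n]}\binom{N}{d_j}}{\binom{nN}{nd}}.
\]
The required ratio is therefore $\Hrd\,\binom{nN}{nd}\bigl/\bigl(\binom{\binom{n}{r}}{m}\prod_j\binom{N}{d_j}\bigr)$.

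I would apply Theorem~\ref{thm:nearreg} to $\Hrd$: the hypothesis $d \gg r^4 n\log n$ with $0<\lambda<c<1$ gives~\eqref{mainineq}, and $\deltamax = O(d^{3/5})$ matches~\eqref{nearreg} in the first quadrant, which we are in since $r\leq cn$ and $\lambda<c<1/2$ for $c$ small enough (other ranges reduce to this one by Lemma~\ref{lem:symmetries}). Stirling's formula for the remaining binomials gives $\log\binom{K}{k} = KH(k/K)-\tfrac12\log\bigl(2\pi k(K-k)/K\bigr)+O(1/k+1/(K-k))$, where $H(y)=-y\log y-(1-y)\log(1-y)$. Writing $d_j = d+\delta_j$ and Taylor-expanding $\log\binom{N}{d_j}$ around $d$, the first-order term vanishes since $R_1 = 0$; summing over $j$ gives
\[
\sum_{j\in[n]}\log\binom{N}{d_j} = nNH(\lambda)-\tfrac n2\log\bigl(2\pi N\lambda(1-\lambda)\bigr) + \frac{R_2}{2N}H''(\lambda) + \frac{R_3}{6N^2}H'''(\lambda) + \frac{R_4}{24N^3}H^{(4)}(\lambda) + \cdots,
\]
with additional subleading $R_k$ contributions from the $-\tfrac12\log$ Stirling factor.

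Combining the four resulting expressions, the entropy terms $\binom{n}{r}H(\lambda)$ (from Theorem~\ref{thm:nearreg}) and $\binom{n}{r}H(\lambda)$ (from $\log\binom{\binom{n}{r}}{m}$) cancel, and likewise $nNH(\lambda)$ (from $\log\binom{nN}{nd}$) cancels with $nNH(\lambda)$ (from $\sum_j\log\binom{N}{d_j}$), using $\binom{n}{r}=nN/r$. The Gaussian and polynomial prefactors, including $\sqrt{r(n-r)(n-1)^{n-1}/(2^n\pi^n Q^n)}$ from Theorem~\ref{thm:nearreg} with $Q=(1-\lambda)(n-r)d$, simplify algebraically to $\bigl(\tfrac{n-1}{n-r}\bigr)^{(n-1)/2}$. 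For the $R$-terms: $H''(\lambda)=-1/(\lambda(1-\lambda))$ yields a binomial contribution $-R_2/(2(1-\lambda)d) = -(n-r)R_2/(2Q)$ inside $\sum_j\log\binom{N}{d_j}$; with its sign in the ratio this combines with $-(n-1)R_2/(2Q)$ from Theorem~\ref{thm:nearreg} as $-(n-1)R_2/(2Q)+(n-r)R_2/(2Q)=-(r-1)R_2/(2(1-\lambda)(n-r)d)$, as advertised.

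The main obstacle is showing that all subleading corrections are absorbed into $O(\bar\eps)$. The terms $\tfrac{n^2 R_2}{4Q^2}$, $\tfrac{(1-2\lambda)(n-2r)nR_3}{6Q^2}$ and $-\tfrac{n^3R_4}{12Q^3}$ from Theorem~\ref{thm:nearreg} must be matched against the analogous contributions from the binomial Taylor expansion coming from $H'''(\lambda)=(1-2\lambda)/(\lambda(1-\lambda))^2$ and $H^{(4)}(\lambda)$ together with the $-\tfrac12\log$ Stirling correction; a key algebraic identity driving the cancellation is $(n-2r)n-(n-r)^2=-r^2$ for the $R_3$ residual, with an analogous simplification for $R_4$. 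The resulting residuals are then bounded using $|R_k|\leq n\deltamax^k$ together with $\deltamax=O(d^{3/5})$ and $d\gg r^4 n\log n$, which are the parameter regimes where the statement was designed to hold.
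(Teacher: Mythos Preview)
Your proposal is correct and follows essentially the same route as the paper. The paper writes the ratio exactly as you do,
\[
   \frac{\Prob_{\calD_r(n,m)}(\dvec)}{\Prob_{\calB_r(n,m)}(\dvec)}
   = \frac{\binom{nN}{nd}\,H_r(\dvec)}{\binom{\binom nr}{m}\prod_{j=1}^n \binom{N}{d_j}},
\]
applies Theorem~\ref{thm:nearreg} to $H_r(\dvec)$, and handles the three binomial factors via its Lemma~\ref{binstirling}, which is precisely a packaged version of the Stirling-plus-Taylor expansion you outline (expand $B(K,x)$ to fourth order in $x$ with explicit coefficients in~$\lambda$). Your identification of the leading $R_2$ combination and of the $R_3$ near-cancellation via $(n-2r)n-(n-r)^2=-r^2$ is exactly what drives the computation; the remaining residuals in $R_2/d^2$, $R_3/d^2$, $R_4/d^3$ are bounded just as you indicate. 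One small remark: your aside about reducing to the first quadrant ``for $c$ small enough'' is unnecessary and slightly off, since $c<1$ is given rather than chosen; but this is harmless because Theorem~\ref{thm:nearreg} already holds for all $3\le r\le n-3$, and the hypotheses $r\le cn$, $\lambda<c<1$ are exactly what make the error terms in Lemma~\ref{binstirling} controllable.
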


As noted in~\cite{KLW}, one can expect $\calT_r(n,m)$ to be a
better match to $\calD_r(n,m)$, especially for large edge sizes. 
We prove this for the full range of our parameters.

 \begin{theorem}\label{thm:DvsT}
  If\/ $3\leq r\leq n-3$ and assumptions~\eqref{mainineq} and~\eqref{nearreg}
  hold, then
\begin{align*}
   \Prob_{\calD_r(n,m)}(\dvec) &= \Prob_{\calT_r(n,m)}(\dvec)\;
      \Bigl(\frac{n-1}{n}\Bigr)^{\!(n-1)/2}
       \! \exp\biggl( \frac{R_2}{2Q} 
       + O(\hat\eps)\biggr), \\
   &= \Prob_{\calT_r(n,m)}(\dvec)\,
        \exp\biggl( -\frac 12 + \frac{R_2}{2Q} + O(n^{-1}+\hat\eps)\biggr), 
\end{align*}
where $\hat\eps:=\eps+ \deltamax n^{3/5} Q^{-3/5}$ and $\eps$ is defined in \emph{(\ref{eq:eps})}.
\end{theorem}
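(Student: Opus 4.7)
The plan is to derive an explicit asymptotic formula for $\Pr_{\calT_r(n,m)}(\dvec)$ of the same shape as the expression for $\Pr_{\calD_r(n,m)}(\dvec) = H_r(\dvec)/\binom{M}{m}$ obtained from Theorem~\ref{thm:nearreg}, so that the ratio telescopes term-by-term to the claimed formula. Throughout, write $M=\binom{n}{r}$, $N_1=\binom{n-1}{r-1}$, $N_2=\binom{n-1}{r}$.

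By definition,
\[
\Pr_{\calT_r(n,m)}(\dvec) = \Bigl(\prod_{j\in[n]} \Pr(X_j = d_j)\Bigr)\big/\Pr(S=nd),
\]
where $X_j$ are iid with distribution $\operatorname{Hypergeom}(M, N_1, m)$ and $S = \sum_j X_j$. First I would apply Stirling's formula to each $\Pr(X_j = d_j) = \binom{N_1}{d_j}\binom{N_2}{m-d_j}/\binom{M}{m}$, letting $f(k)$ denote its logarithm. Computing the derivatives of $f$ at $k=d$ using the derivatives of the binary entropy $H$ at $\lambda$ together with the identity $\lambda(1-\lambda)r(n-r)M/n = Q$ yields $f'(d)=0$ (since $d/N_1 = (m-d)/N_2 = \lambda$), $f''(d) = -n/Q$, $f'''(d) = (1-2\lambda)n(n-2r)/Q^2$, and a similar closed-form expression at fourth order. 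Taylor-expanding $f$ around $k=d$ and summing over $j\in[n]$, with $R_1=0$ killing the linear term, gives an explicit expansion of $\sum_j\log\Pr(X_j=d_j)$ in $R_2, R_3, R_4$ matching the structure of Theorem~\ref{thm:nearreg}. Next I would apply a local central limit theorem (Edgeworth expansion) to the iid sum $S$ at its mean, obtaining
\[
\Pr(S=nd) = (2\pi Q)^{-1/2}\bigl(1+O(1/n)\bigr),
\]
with the leading $n^{-1/2}$ Edgeworth correction vanishing because $H_3(0)=0$.

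Write $\log\Pr_{\calD_r(n,m)}(\dvec) = \log H_r(\dvec) - \log\binom{M}{m}$, expanding $\log\binom{M}{m}$ via Stirling and using $M\lambda(1-\lambda) = nQ/(r(n-r))$ to combine with the prefactor in Theorem~\ref{thm:nearreg}. Subtracting $\log\Pr_{\calT_r(n,m)}(\dvec)$ term-by-term, the combined prefactors collapse to $\frac{n-1}{2}\log((n-1)/n)$, the two $R_2/Q$ coefficients combine to $R_2/(2Q)$, and the $R_3/Q^2$ coefficients cancel exactly. This yields the first form of the theorem; the second form follows from $((n-1)/n)^{(n-1)/2} = \exp(-\frac{1}{2}+O(1/n))$ via the expansion $\log(1-1/n) = -1/n - 1/(2n^2) - \cdots$. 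The main obstacle is controlling the higher-order residuals: the $R_4/Q^3$ and the sub-leading $R_2/Q^2$ coefficients do not match exactly between Theorem~\ref{thm:nearreg} and the direct hypergeometric Stirling expansion---for instance, the latter produces structural factors $(1-3\lambda(1-\lambda))(n^2-3r(n-r))$ rather than a plain $n^3$ at fourth order. Absorbing these discrepancies into $O(\hat\eps)$ via $R_t \le n\deltamax^t$, the near-regular hypothesis~\eqref{nearreg}, and the fine structure of $\eps$ from~\eqref{eq:eps} is the most delicate bookkeeping step of the proof.
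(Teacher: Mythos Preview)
Your approach is essentially the same as the paper's. The paper writes the ratio $\Pr_{\calD_r(n,m)}(\dvec)/\Pr_{\calT_r(n,m)}(\dvec)$ as an explicit quotient involving $H_r(\dvec)$, the normalizing probability $P$, and products of binomial coefficients, then applies Theorem~\ref{thm:nearreg} for $H_r(\dvec)$, the local limit theorem of Lemma~\ref{hyperbounds}(c) for~$P$, and the packaged Stirling expansion of Lemma~\ref{binstirling} for each binomial factor $B(K,x)$; your Taylor expansion of $f(k)=\log\Pr(X_j=k)$ around $k=d$ and your Edgeworth argument for $P$ are exactly these two lemmas unpacked, and the term-by-term cancellations you describe are precisely what the paper leaves implicit in ``the other factors''.
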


Kam\v cev, Liebenau and Wormald~\cite{KLW} conjectured that $\calD_r(n,m)$ is asymptotically
equal to $\calT_r(n,m)$ almost everywhere.

\begin{conjecture}[{\cite{KLW}}]\label{conj:Trules}
Let $2\le r\le n-2$ and $\min\{m,\binom nr-m\}=\omega(\log n)$.
Then there exists a set $\frakW$ that has probability
$1-O(n^{-\omega(1)})$ in both $\calD_r(n,m)$ and $\calT_r(n,m)$,
such that uniformly for all $\dvec\in\frakW$,
\[
    \Prob_{\calD_r(n,m)}(\dvec) = \Prob_{\calT_r(n,m)}(\dvec)\,(1+o(1)).
\]
\end{conjecture}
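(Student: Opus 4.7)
The plan is to derive Conjecture~\ref{conj:Trules} within the parameter range where Theorem~\ref{thm:DvsT} applies, namely $3\le r\le n-3$, $d\gg r^4 n\log n$, and $\lambda$ bounded away from $1$; the sparse and extreme‑density regimes lie beyond the reach of the methods developed here. The strategy is to choose $\frakW$ so that Theorem~\ref{thm:DvsT} gives the pointwise asymptotic $\Prob_{\calD_r(n,m)}(\dvec)=\Prob_{\calT_r(n,m)}(\dvec)(1+o(1))$ uniformly on $\frakW$, and then to verify that $\frakW$ carries probability $1-n^{-\omega(1)}$ in both spaces. The underlying observation is that the marginal variance of a hypergeometric coordinate equals $\sigma^2 = (1-\lambda)\frac{n-r}{n}d(1+o(1)) = \frac{Q}{n}(1+o(1))$, and conditioning on $\sum d_j = nd$ reduces each variance by a factor $(1-1/n)$, so $\E_{\calT_r(n,m)}[R_2] = (n-1)\sigma^2(1+o(1)) = Q(1+o(1))$. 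Hence a typical $\dvec$ satisfies $R_2/(2Q) = \tfrac12 + o(1)$, which makes the exponent $-\tfrac12 + R_2/(2Q)$ in Theorem~\ref{thm:DvsT} vanish.

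Fix a slowly growing $\omega_n\to\infty$ with $\omega_n^{-1}d^{3/5} \gg \sqrt{(Q/n)\log^2 n}$ (possible in our parameter range) and set
\[
\frakW := \bigl\{\dvec : \deltamax \le \omega_n^{-1}d^{3/5}\ \text{and}\ |R_2-Q|\le Q/\omega_n\bigr\}.
\]
On $\frakW$ the near‑regular hypothesis~\eqref{nearreg} holds and Theorem~\ref{thm:DvsT} immediately gives $\Prob_{\calD_r(n,m)}(\dvec) = \Prob_{\calT_r(n,m)}(\dvec)(1+o(1))$ uniformly. For the $\calT_r(n,m)$‑probability of $\frakW^c$, the coordinates are conditionally independent hypergeometrics, so a Chv\'atal/Serfling tail plus union bound controls $\deltamax$, while concentration of $R_2=\sum(d_j-d)^2$ about its mean $(1+o(1))Q$ follows from a Hanson--Wright or exponential Efron--Stein bound applied under the unconditioned product law and then transferred to the conditional law via the local CLT for $\sum d_j$ (whose $\sqrt Q$ loss is absorbed into the super‑polynomial tail). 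These arguments can be made sharp enough to yield $\Prob_{\calT_r(n,m)}(\frakW^c) \le n^{-\omega(\log n)}$.

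The coordinates of $\calD_r(n,m)$ are dependent, which is the main obstacle. The $\deltamax$ bound is painless since each $d_j$ has an exact hypergeometric marginal under $\calD_r(n,m)$, so a single union bound yields $\Prob_{\calD_r}(\deltamax > \omega_n^{-1}d^{3/5}) = n^{-\omega(1)}$. The concentration of $R_2$ must be transferred from $\calT_r$: introduce the intermediate set $\calR':=\{\dvec: \deltamax\le C\sqrt{(Q/n)\log^2 n}\}$, on which Theorem~\ref{thm:DvsT} applies and the pointwise ratio $\Prob_{\calD_r}/\Prob_{\calT_r}$ is bounded above by $\exp(O(R_2/Q)) \le n^{O(\log n)}$. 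Combined with the super‑polynomial $\calT_r$‑tail of the previous paragraph, this gives $\Prob_{\calD_r}(\calR'\setminus\frakW) \le n^{O(\log n)}\cdot n^{-\omega(\log n)} = n^{-\omega(1)}$, while the marginal union bound handles $(\calR')^c$. The principal technical hurdle is therefore securing $\calT_r$‑tails for $R_2$ sharp enough (of order $n^{-\omega(\log n)}$) to dominate the polynomial‑in‑$n^{\log n}$ blow‑up of the ratio on $\calR'$: this is standard for quadratic functionals of independent hypergeometric variables, but the requirement of super‑polynomial tails demands a careful choice of concentration inequality rather than a routine application.
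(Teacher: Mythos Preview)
Your overall strategy matches the paper's proof of Theorem~\ref{thm:Tconj} (which establishes the conjecture in the range where Theorem~\ref{thm:DvsT} applies): pick $\frakW$ so that $\deltamax$ is small and $R_2$ is close to its typical value $Q$, apply Theorem~\ref{thm:DvsT} pointwise on $\frakW$, and then show $\frakW$ has high probability in both models. The treatment of $\deltamax$ (marginal hypergeometric plus a union bound) and the transfer of $R_2$-concentration from $\calT_r$ to $\calD_r$ via the pointwise ratio from Theorem~\ref{thm:DvsT} are essentially as in the paper, though the paper uses a dyadic layering over shells $\{kn^{1/2}\sigma^2\log^2 n<|R_2-n\sigma^2|\le(k+1)n^{1/2}\sigma^2\log^2 n\}$ rather than your single-step bound, and thereby obtains the explicit error term of Theorem~\ref{thm:Tconj} rather than only $o(1)$.

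There is, however, a genuine gap in your $\calT_r$ concentration argument for $R_2$. You propose to prove concentration under the \emph{unconditioned} product of hypergeometrics and then transfer to $\calT_r$ by dividing by $\Prob(\sum Z_j=nd)$, claiming that the $O(\sqrt Q)$ loss from the local CLT ``is absorbed into the super-polynomial tail''. This fails at the dense end of your stated range: when $r$ is proportional to $n$ and $\lambda$ is bounded away from $0$ and $1$, one has $\log Q=\Theta(n)$, so the conditioning factor is $e^{\Theta(n)}$. A Bernstein or Hanson--Wright bound for the sum of the $n$ independent sub-exponential variables $(Z_j-d)^2$ (each with sub-exponential norm $\Theta(d)$) yields an unconditioned tail of the form $\exp(-cn/\omega_n^2)$ at deviation $Q/\omega_n$, with $c$ a \emph{universal} constant; for $\omega_n\to\infty$ this cannot overcome an $e^{\Theta(n)}$ factor. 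Consequently your argument only covers the sub-range where $Q$ is at most polynomial in~$n$ (for example $r$ bounded).

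The paper avoids this loss by working \emph{directly} under the conditioned measure. The key structural input is that each hypergeometric $Z_j$ is a sum of $m$ independent Bernoulli variables (Vatutin--Mikhailov), so $\calT_r(n,m)$ is the law of $mn$ independent Bernoullis conditioned on a fixed sum~$nd$. By Pemantle--Peres this conditioned measure is strongly Rayleigh, and Lipschitz functionals concentrate with tail $\exp(-t^2/(8a^2S))$ with no transfer penalty (Lemma~\ref{concen}). One further needs to truncate, replacing $R_2$ by $R'_2=\sum_j\min\{(d_j-d)^2,\,d\log^2 n\}$ to obtain a bounded Lipschitz constant $a=O(d^{1/2}\log n)$ when a single Bernoulli is flipped; you do not mention this truncation, and without it the Lipschitz concentration does not apply to $R_2$ itself.
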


We prove their conjecture for our range of parameters.

\begin{theorem}\label{thm:Tconj}
 If $3\leq r\leq n-3$ and assumption~\eqref{mainineq} holds, then
there exists a set $\frakW$ that has probability
$1 - n^{-\Omega(\log n)}$ in both $\calD_r(n,m)$ and $\calT_r(n,m)$, such that
uniformly for all $\dvec\in\frakW$,
\[
    \Prob_{\calD_r(n,m)}(\dvec) = \(1+O(\eps+n^{1/10}Q^{-1/10}\log n
       + n^{-1/2}\log^2 n)\) \Prob_{\calT_r(n,m)}(\dvec).
\]
\end{theorem}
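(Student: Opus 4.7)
The plan is to take $\frakW$ to be the set of valid degree sequences satisfying two high-probability conditions:
\textbf{(a)} $\deltamax \le K\sqrt{Q/n}\,\log n$, and
\textbf{(b)} $|R_2-Q|\le K Q n^{-1/2}\log^2 n$,
for a suitable constant $K$. Under assumption~\eqref{mainineq} the ratio $(Q/n)^{1/10}$ dominates $\log n$, so (a) is strictly stronger than the near-regularity condition~\eqref{nearreg} and Theorem~\ref{thm:DvsT} applies throughout $\frakW$. On $\frakW$, condition~(a) forces $\deltamax\,n^{3/5}Q^{-3/5}=O(n^{1/10}Q^{-1/10}\log n)$ and hence $\hat\eps=O\bigl(\eps+n^{1/10}Q^{-1/10}\log n\bigr)$, while condition~(b) forces $-\tfrac12+R_2/(2Q)=O(n^{-1/2}\log^2 n)$. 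Substituting these estimates into the second formula of Theorem~\ref{thm:DvsT} and expanding the exponential yields exactly the claimed ratio.

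The main task is to show $\Pr(\frakW)=1-n^{-\Omega(\log n)}$ in both models. In $\calT_r(n,m)$ the vector consists of independent hypergeometric variables with mean $d$ and variance $\sigma^2=\Theta(Q/n)$ conditioned on sum $nd$. A local central limit theorem gives $\Pr(\sum X_j=nd)=\Theta(Q^{-1/2})$, so conditioning loses at most a polynomial factor and it is enough to prove unconditional tail bounds of size $\exp(-\Omega(\log^2 n))$. Condition~(a) then follows from a standard Chernoff tail for hypergeometric variables plus a union bound over $j\in[n]$. For~(b) I would truncate to the event from~(a) so that each $(X_j-d)^2$ is bounded by $O(Q n^{-1}\log^2 n)$, and then apply Bernstein's inequality to the sum $\sum\bigl((X_j-d)^2-\E(X_j-d)^2\bigr)$ of independent centred bounded variables whose total variance is $\Theta(Q^2/n)$; this gives concentration at the required scale $Q n^{-1/2}\log^2 n$.

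In $\calD_r(n,m)$, each $d_j$ is marginally hypergeometric with the same distribution as in $\calT$, so~(a) follows by the same Chernoff+union-bound argument applied directly to the uniformly random $m$-edge hypergraph. For~(b), $R_2=\sum d_j^2-nd^2$ is a degree-two polynomial in the dependent edge indicators $\mathbf{1}[W\in E]$, so direct Bernstein is unavailable; one route is a variance-sensitive martingale inequality (Freedman's) applied to an edge-exposure martingale conditioned on~(a), and another is a polynomial-concentration bound of Kim--Vu type. A cleaner alternative is to transfer~(b) from $\calT$ to $\calD$ using Theorem~\ref{thm:DvsT}: on the event~(a) we have $R_2\le n\deltamax^2=O(Q\log^2 n)$, so the density ratio $\Prob_{\calD_r(n,m)}/\Prob_{\calT_r(n,m)}$ is bounded by $\exp\bigl(R_2/(2Q)+O(1)\bigr)=n^{O(\log n)}$; the Bernstein tail of size $\exp(-\Omega(\log^4 n))$ obtained in $\calT$ comfortably survives this multiplication.

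The principal obstacle is precisely this sharp concentration of the quadratic statistic $R_2$ under $\calD_r(n,m)$: the dependence of the edge indicators rules out a direct scalar Bernstein bound, and plain Azuma/McDiarmid on an edge-exposure martingale is too weak by a polynomial factor. Either a variance-sensitive martingale inequality restricted to~(a), or the indirect transfer from~$\calT$ via Theorem~\ref{thm:DvsT}, resolves this issue. The remaining steps---verifying the concentration in $\calT$, checking that~(a) lies inside the near-regular regime, and the final substitution---are routine bookkeeping.
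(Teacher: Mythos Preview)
Your proposal is correct and follows the same architecture as the paper: define $\frakW$ by a bound on $\deltamax$ together with concentration of $R_2$ near $Q\sim n\sigma^2$, invoke Theorem~\ref{thm:DvsT} on $\frakW$, and prove $\Pr(\frakW)=1-n^{-\Omega(\log n)}$ in both models, handling the hard case (concentration of $R_2$ in $\calD_r(n,m)$) by transferring from $\calT_r(n,m)$ via the density ratio in Theorem~\ref{thm:DvsT}.

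The execution differs in two linked technical choices. For concentration of $R_2$ in $\calT_r(n,m)$, the paper works directly in the conditioned measure: it writes each hypergeometric as a sum of independent Bernoullis, observes that independent Bernoullis conditioned on a fixed sum form a strongly Rayleigh measure, and applies the Pemantle--Peres inequality (Lemma~\ref{concen}); this yields only an $e^{-Ck^2\log^2 n}$ tail at deviation level $k$. Your route---Bernstein on the \emph{unconditioned} i.i.d.\ hypergeometrics after truncation, then paying the polynomial conditioning factor $P^{-1}=\Theta(Q^{1/2})$---is more elementary and yields the stronger $e^{-\Omega(\log^4 n)}$ tail you state. Correspondingly, for the transfer to $\calD_r(n,m)$ the paper compensates for its weaker tail by slicing into shells $kn^{1/2}\sigma^2\log^2 n<|R_2-n\sigma^2|\le(k+1)n^{1/2}\sigma^2\log^2 n$, on each of which the ratio $\Pr_{\calD}/\Pr_{\calT}$ is only $\exp\bigl((k+1)\log^2 n/(2n^{1/2})+o(1)\bigr)$, and then sums over $k\ge 1$. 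Your one-shot transfer using the crude bound $\Pr_{\calD}/\Pr_{\calT}\le e^{R_2/(2Q)+O(1)}\le n^{O(\log n)}$ on event~(a) works precisely because your $e^{-\Omega(\log^4 n)}$ tail absorbs this factor. Both routes are sound; yours avoids a less standard concentration inequality at the cost of nothing, while the paper's shell argument would still be needed had one only the Pemantle--Peres tail.
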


\subsection{Structure of the paper}\label{ss:structure}

Having now stated our main results, we describe the overall structure of the paper.
In Section~\ref{s:main-proof}, we outline how
$H_r(\dvec)$ can be expressed as an $n$-dimensional integral and state the
lemmas which lead to its evaluation.
In Section~\ref{s:bounds} we prove some necessary bounds concerning the
quantities $\lambda_W(\betavec)$ and $A(\betavec)$, and then in
Section~\ref{s:evaluate} we apply them to evaluate the integral, 
completing the proof of our main enumeration result, Theorem~\ref{thm:main}.
In Section~\ref{ss:unique} we address existence and uniqueness of solutions to (\ref{exact}),
proving Lemma~\ref{lem:degree-sufficient} and Lemma~\ref{lem:unique}.
Section~\ref{s:nearreg} examines the near-regular case, proving Theorem~\ref{thm:nearreg}. 
Then in Section~\ref{s:degree-models} we prove our results about the degree sequence
of random uniform hypergraphs, as stated in
Section~\ref{ss:near-reg}.  Finally, Section~\ref{s:technical} contains several
technical proofs that have been deferred, 
including the proof of Lemma~\ref{lem:symmetries}.

Some of the calculations in this paper are rather tedious, particularly
in Sections~\ref{s:nearreg} and~\ref{s:degree-models}. We carried out
the worst of them first using the computer algebra package Maple and
later checked them by hand.  All infinite series are based on Taylor's
theorem and so have clear-cut truncation criteria.

\nicebreak

\section{Proof outline for Theorem~\ref{thm:main}}\label{s:main-proof}

We will take advantage of Lemma~\ref{lem:symmetries} to work in the
first quadrant, where the conditions of Theorem~\ref{thm:main}
are given by~\eqref{eq:assumptions}.

The number $H_r(\dvec)$ of simple $r$-uniform hypergraphs with degree sequence
$\dvec = (d_1,\ldots, d_n)$
can be expressed using a generating function, where the power of
variable $x_j$ gives the degree of vertex $j$ for $j\in [n]$.  Each $W\in\rsets$ will
contribute a factor of~$\prod_{j\in W} x_j$, if $W$ is an edge in the hypergraph,
or~1 if $W$ is not an edge. Using $[x_1^{d_1}\cdots x_n^{d_n}]$ to denote
coefficient extraction, this gives
\begin{align*} H_r(\dvec) &= [x_1^{d_1}\cdots x_n^{d_n}] 
   \prod_{W\in \rsets } \Bigl(1 + \prod_{j\in W} x_j\Bigr)\\
  &= \frac{1}{(2\pi i)^n}\, \oint\cdots\oint\,   
  \frac{\prod_{W\in \rsets }\(1 + \prod_{j\in W} x_j\)}{ \prod_{j\in [n]} x_j^{d_j+1}}\, d\xvec,
\end{align*}
using Cauchy's coefficient formula for the second line.  Each integral is over
a contour enclosing the origin.
Recalling that $\betavecstar$ is a solution of (\ref{exact}),
we choose the $j$th contour to be a circle of radius $e^{\beta^\ast_j}$,
for $j\in [n]$.  This choice leads to the expression
\begin{align}
H_r(\dvec) &= (2\pi)^{-n}\, \exp\Bigl(-\sum_{j\in [n]} \beta^\ast_j d_j\Bigr)\,
  \int_{-\pi}^\pi \cdots \int_{-\pi}^\pi \frac{\prod_{W\in \rsets}
   \(1 + \prod_{j\in W}  e^{\beta^\ast_j + i\theta_j}\)}
     {\exp\(i\sum_{j\in [n]} d_j\theta_j\)} \, d\thetavec \nonumber \\
 &= P_r(\betavecstar)\, 
  \int_{-\pi}^\pi \cdots \int_{-\pi}^\pi\, 
   \frac{\prod_{W\in \rsets} \left(
   1 + \lambda_W\(\exp\(i\sum_{j\in W}\theta_j\) - 1\)\right)}
   {\exp\(i\sum_{j\in [n]} d_j\theta_j\)} \, d \thetavec, \label{eq:number-of-hypergraphs}
\end{align}
where the factor in front of the integral is given by
\begin{equation}
\label{eq:Prb-def}
 P_r(\betavecstar):= (2\pi)^{-n}\, \exp\biggl(-\sum_{j\in [n]} \beta^\ast_jd_j \biggr)\, 
  \prod_{W\in\rsets} \Bigl( 1+ e^{\sum_{j\in W} \beta^\ast_j}\Bigr). 
    \end{equation}
Let $F(\thetavec)$ denote the integrand, that is,
\begin{equation}
\label{eq:F-def}
  F(\thetavec):= 
   \frac{\prod_{W\in \rsets} 
     \(1 + \lambda_W(\exp(i\sum_{j\in W}\theta_j) - 1)\)}
   {\exp\(i\sum_{j\in [n]} d_j\theta_j\)}. 
\end{equation}
As we will see in Lemma~\ref{lem:truncate}, our choice of $\betavecstar$
ensures that the linear term in the expansion of $\log F(\thetavec)$ vanishes.

The maximum
value of $|F(\thetavec)|$ is 1, which is achieved if and only
if $\sum_{j\in W}\theta_j\equiv 0\!\!\pmod{2\pi}$ for all $W\in \rsets$.
If this condition holds then all $\theta_j$ must be equal modulo $2\pi$, 
as can
be seen by considering two $r$-subsets $W$, $W'$ which differ in just
one vertex and observing that such a pair of subsets exists for any pair of vertices.
Hence there are precisely $r$ points where $F(\thetavec)$
is maximised in $(-\pi,\pi]^n$, namely $\thetavec^{(1)},\ldots, \thetavec^{(r)}$,
where for $t\in [r]$ the point 
$\thetavec^{(t)} = (\theta_1^{(t)},\ldots, \theta_n^{(t)})$
is defined by
\[ \theta^{(t)}_1 = \theta^{(t)}_2 = \cdots = \theta^{(t)}_n \equiv\frac{2\pi t}{r}\pmod{2\pi}.
\]

We will estimate the value of the integral first in the regions close to $\thetavec^{(t)}$, for some $t\in[r]$,  then for the remainder of the domain.
Write $U_n(\rho)$ for the ball of radius $\rho$ around the origin, with respect to the
infinity norm; that is,
\[ U_n(\rho):=  \bigl\{ \xvec\in\Reals^n \st |x_j| \leq \rho \text{ for } j\in [n] \bigr\},
\]
and for $\rho>0$ define the region $\calR(\rho)$ as
\begin{equation}\label{eq:R-def}
\calR(\rho):=U_n(\rho) \cap \biggl\{\thetavec\in \Reals^n: \biggl|\, \sum_{j\in [n]} \theta_j \biggr|\le n r^{-1/2} \rho \biggr\}.
\end{equation}

Evaluation of the integral proceeds by the following sequence of lemmas, whose
proof is deferred to Section~\ref{s:evaluate}.
The first two lemmas give an estimate of the value of the integral over $U_n(r^{-1})$, by providing an estimate over $\calR(d^{-1/2}\log{n})$ and  $U_n(r^{-1})\setminus\calR(d^{-1/2}\log{n})$ respectively.

\begin{lemma}\label{lem:inbox}
	If assumptions \emph{\eqref{eq:assumptions}} hold then
	\[
	  \int_{\calR(d^{-1/2}\log{n})} F(\thetavec)\, d\thetavec
	    = (1 + O(\eps))\,\frac{\pi^{n/2}}{|A|^{1/2}},
	 \]
	 where $\eps$ is given in \emph{(\ref{eq:eps})}. 
\end{lemma}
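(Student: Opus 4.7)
The plan is to Taylor-expand $\log F(\thetavec)$ around the origin, identify the quadratic part as the Gaussian exponent $-\thetavec\trans A\thetavec$, and verify that the higher-order remainder contributes only a factor $1+O(\eps)$ on the truncation region $\calR(d^{-1/2}\log n)$.

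First I would check that the constant and linear parts of the expansion vanish. The constant is $\log F(\zerovec)=0$. Writing $s_W:=\sum_{j\in W}\theta_j$, each $W\in\rsets$ contributes $\log\bigl(1+\lambda_W(e^{is_W}-1)\bigr)$, whose linear term in $\thetavec$ is $i\lambda_W s_W$. Summing over $W$ and subtracting $i\sum_j d_j\theta_j$ from the denominator of $F$ gives a total linear coefficient of $i\theta_j$ equal to $\sum_{W\ni j}\lambda_W-d_j$, which is zero by the saddle-point equation~\eqref{exact}. The quadratic contribution equals $-\tfrac12\sum_W\lambda_W(1-\lambda_W)s_W^2 = -\thetavec\trans A\thetavec$, straight from the definition of $A$ in~\eqref{eq:A-def}.

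Next, on $\calR(d^{-1/2}\log n)$ I would use $|s_W|\le rd^{-1/2}\log n = o(1)$, which follows from~\eqref{eq:assumptions}, to guarantee rapid convergence of the Taylor series. I would then write
\[
F(\thetavec) = e^{-\thetavec\trans A\thetavec}\,G(\thetavec), \qquad G(\thetavec):=\exp\bigl(R(\thetavec)\bigr),
\]
where $R(\thetavec)$ collects the cubic and higher order terms. Following the strategy of~\cite{mother}, I would treat the normalised Gaussian $(|A|/\pi^n)^{1/2}e^{-\thetavec\trans A\thetavec}$ as a probability density on $\Reals^n$ and estimate $\E[G]$. Odd-degree Taylor pieces vanish in expectation by the $\thetavec\mapsto-\thetavec$ symmetry, so the leading error comes from the fourth-order remainder and the square of the cubic part. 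Both can be bounded by moment computations using the covariance $\tfrac12 A^{-1}$, the uniform bound $\lambda_W\in(0,1)$, and repeated sums of the form $\sum_W s_W^k$.

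The last task is to compare $\int_\calR e^{-\thetavec\trans A\thetavec}\,d\thetavec$ with the unrestricted Gaussian integral $\pi^{n/2}/|A|^{1/2}$. The matrix $A$ has one eigenvalue of order $r(1-\lambda)d$ in the direction of $\mathbf 1$ and remaining eigenvalues of order $(1-\lambda)d(n-r)/n$; the tailored constraint $\abs{\sum_j\theta_j}\le nr^{-1/2}d^{-1/2}\log n$ appearing in~\eqref{eq:R-def} captures $\Omega(\log n)$ standard deviations in the large eigendirection while the box constraint $|\theta_j|\le d^{-1/2}\log n$ handles the perpendicular directions, so the tail outside $\calR$ is of size $n^{-\Omega(\log n)}$, easily absorbed into $\eps$. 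The main obstacle will be pinning down the moment computation to the precision demanded by $\eps$ in~\eqref{eq:eps}: the quartic remainder and cubic-squared contributions must be controlled using the structure of $A^{-1}$ together with the near-regularity assumption~\eqref{beta-range}, which keeps $\lambda_W$ and its higher moments comparable across overlapping edges. This is where the complex-martingale toolkit of~\cite{mother} will do the heavy lifting.
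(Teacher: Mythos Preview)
Your outline is essentially the paper's approach: Taylor-expand $\log F$, kill the linear term via~\eqref{exact}, recognise the quadratic part as $-\thetavec\trans A\thetavec$, and control the cubic/quartic remainder by Gaussian moment (Isserlis) computations against the covariance $\tfrac12 A^{-1}$, with the toolkit from~\cite{mother} doing the analytic work.

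One framing issue is worth flagging. You describe two separate steps: (i) treat the normalised Gaussian as a density on all of~$\Reals^n$ and compute $\E[G]$, then (ii) compare $\int_{\calR} e^{-\thetavec\trans A\thetavec}\,d\thetavec$ with the unrestricted Gaussian. As written, step~(i) is not quite well-posed: $G=\exp(R)$ with $R$ a cubic--quartic polynomial is not integrable against a Gaussian on~$\Reals^n$, so ``$\E[G]$'' does not exist in the naive sense. The paper sidesteps this by invoking a single packaged result (stated here as Theorem~\ref{gauss4pt}, a special case of~\cite[Theorem~4.4]{mother}) which takes the \emph{truncated} integral $\int_\Omega e^{-\thetavec\trans A\thetavec + f + h}$ as input and outputs $(1+K)\pi^{n/2}\abs{A}^{-1/2}\exp\bigl(\E f(\Xvec)+\tfrac12\V f(\Xvec)\bigr)$, with $\E f(\Xvec)$ and $\V f(\Xvec)$ computed for the polynomial $f$ (not its exponential) against the full Gaussian. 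The truncation, the moment expansion, and the error control are bundled into the hypotheses~(a)--(c) and the bound on~$K$; what remains is to verify those hypotheses using the change-of-basis matrix~$T$ from Lemma~\ref{lem:diaggeneral-r} and then compute $\E f(\Xvec)$, $\V f(\Xvec)$, and $\Var\Im f(\Xvec)$ via Isserlis' theorem, each coming out as $O(nr^2/d)$. Your moment intuition (odd terms vanish, leading error from the quartic expectation and the cubic variance) is exactly right; just be aware that the correct object is $\exp(\E f+\tfrac12\V f)$ rather than $\E[e^f]$.
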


 \begin{lemma}\label{L:step1}
	If assumptions  \eqref{eq:assumptions}   hold then
	\[
	\int_{U_n(r^{-1}) \setminus \calR(d^{-1/2}\log{n})} |F(\thetavec)| \, d \thetavec
	=   n^{-\Omega(\log n)}\, \frac{\pi^{n/2}}{|A|^{1/2}}.  
	\]
\end{lemma}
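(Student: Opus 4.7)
The plan is to combine a pointwise Gaussian upper bound on $|F(\thetavec)|$ throughout $U_n(r^{-1})$ with a multidimensional Gaussian tail estimate over the complement of $\calR(d^{-1/2}\log n)$. The starting point is the identity
\[
|F(\thetavec)|^2 = \prod_{W\in\rsets}\bigl(1 - 2\lambda_W(1-\lambda_W)(1-\cos\phi_W)\bigr), \qquad \phi_W := \textstyle\sum_{j\in W}\theta_j,
\]
which follows from $|1+\lambda(e^{i\phi}-1)|^2 = 1-2\lambda(1-\lambda)(1-\cos\phi)$ and the fact that the denominator of $F$ is unimodular. Combining $1-x\le e^{-x}$ with the inequality $1-\cos\phi\ge c_1\phi^2$ (valid for $|\phi|\le 1$, hence on $U_n(r^{-1})$ since $|\phi_W|\le r\cdot r^{-1}=1$), and using $\thetavec\trans A\thetavec = \tfrac12\sum_W\lambda_W(1-\lambda_W)\phi_W^2$, I obtain a pointwise bound of the form $|F(\thetavec)|\le \exp(-c_0\,\thetavec\trans A\thetavec)$ on $U_n(r^{-1})$ for some absolute constant $c_0>0$.

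Writing $\rho:=d^{-1/2}\log n$ and letting $\Xvec$ be a centred Gaussian with covariance $(2c_0 A)^{-1}$, this reduces the task to a Gaussian tail estimate:
\[
\int_{U_n(r^{-1})\setminus \calR(\rho)}|F(\thetavec)|\,d\thetavec \le \frac{(\pi/c_0)^{n/2}}{|A|^{1/2}}\,\Prob(\Xvec\notin \calR(\rho)).
\]
Applying the union bound
$\Prob(\Xvec\notin\calR(\rho))\le \sum_{j\in[n]}\Prob(|X_j|>\rho) + \Prob(|\sum_{j\in[n]} X_j|>nr^{-1/2}\rho)$,
together with the expected bounds $(A^{-1})_{jj}=O(1/d)$ and $\mathbf{1}\trans A^{-1}\mathbf{1}=O(n/(rd))$ from the structure of $A$ (to be established in Section~\ref{s:bounds}), standard Gaussian tail bounds give $\Prob(|X_j|>\rho)\le e^{-\Omega(\log^2 n)}=n^{-\Omega(\log n)}$ (since $|X_j|$ sits $\Omega(\log n)$ standard deviations out) and $\Prob(|\sum_j X_j|>nr^{-1/2}\rho)\le e^{-\Omega(n\log^2 n)}$ (since $|\sum_j X_j|$ sits $\Omega(\sqrt n\log n)$ standard deviations out), yielding $\Prob(\Xvec\notin\calR(\rho)) = n^{-\Omega(\log n)}$.

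The principal obstacle is the prefactor $(\pi/c_0)^{n/2}$, which is exponential in $n$ whenever $c_0<1$ and would in general swamp the $n^{-\Omega(\log n)}$ tail. I would resolve this by a two-scale decomposition, introducing an intermediate radius $\rho_1$ with $\rho\ll\rho_1\ll r^{-1}$ and splitting $U_n(r^{-1})\setminus\calR(\rho)$ into $\calR(\rho_1)\setminus\calR(\rho)$ and $U_n(r^{-1})\setminus\calR(\rho_1)$. On the inner shell, each $|\phi_W|\le r\rho_1$ is small enough that one further Taylor step in $\log(1-2\lambda_W(1-\lambda_W)(1-\cos\phi_W))$ upgrades the pointwise bound to $|F(\thetavec)|\le \exp\bigl(-(1-o(1))\thetavec\trans A\thetavec\bigr)$, so the effective prefactor is $1+o(1)$. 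On the outer region, choosing $\rho_1\gtrsim C\sqrt{n/d}$ (which lies below $r^{-1}$ thanks to $d\gg nr^4\log n$) ensures the per-coordinate deviation exceeds $\sqrt{n}$ standard deviations, so the tail $e^{-\Omega(n)}$ comfortably absorbs the exponential prefactor of the loose bound. The calibration of $\rho_1$ and any finer sub-scale refinement needed for the smallest admissible $r$ uses the assumption $d\gg nr^4\log n$ together with the spectral estimates on $A$ and $A^{-1}$ developed in Section~\ref{s:bounds}.
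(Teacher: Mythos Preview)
Your overall strategy matches the paper's: obtain a pointwise Gaussian bound $|F(\thetavec)|\le\exp\bigl(-(1-O(r^2\hat\rho^2))\,\thetavec\trans A\thetavec\bigr)$ on a box of radius~$\hat\rho$, then control the resulting Gaussian tail outside $\calR(\hat\rho)$. The difference is in the decomposition, and your two-scale version has a real gap. On the inner shell $\calR(\rho_1)\setminus\calR(\rho)$ the quadratic exponent is $-(1-O(r^2\rho_1^2))\thetavec\trans A\thetavec$, so the ``effective prefactor'' after integrating is $(1-O(r^2\rho_1^2))^{-n/2}=\exp\bigl(O(nr^2\rho_1^2)\bigr)$, \emph{not} $1+o(1)$. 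With your choice $\rho_1\asymp\sqrt{n/d}$ this is $\exp\bigl(O(n^2r^2/d)\bigr)$, while the tail from lying outside $\calR(\rho)$ is only $\exp(-\Omega(\log^2 n))$. For these to balance you would need $d\gg n^2r^2/\log^2 n$, which is not implied by the standing assumption $d\gg nr^4\log n$ (take $r$ bounded). So the inner-shell estimate fails across much of the range, and your closing remark about ``finer sub-scale refinement'' is not a side issue but the crux of the argument.

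What the paper does is exactly that refinement: a dyadic decomposition into $O(\log n)$ shells $U_n(2\hat\rho)\setminus\calR(\hat\rho)$ with $\hat\rho=2^\ell\rho$ running from $d^{-1/2}\log n$ up to $(2r)^{-1}$. At each scale the prefactor $\exp(O(nr^2\hat\rho^2))$ and the tail $\exp(-\Omega(d\hat\rho^2))$ depend on the \emph{same} $\hat\rho$, so the single inequality $d\gg nr^2$ (a consequence of $d\gg nr^4\log n$) makes the tail dominate at every scale. This avoids the mismatch in your two-scale split, where the prefactor scales with the outer radius $\rho_1$ but the tail only with the inner radius~$\rho$. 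Your Gaussian-tail computation via $(A^{-1})_{jj}=O(1/d)$ and $\boldsymbol{1}\trans A^{-1}\boldsymbol{1}=O(n/(rd))$ is correct and is effectively what the paper does through the change of basis $T$ and the containment $T(U_n(\hat\rho_1))\subseteq\calR(\hat\rho)$ from Lemma~\ref{lem:diaggeneral-r}.
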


\medskip

Define the regions $U^{(t)}$ for $t\in [r]$ by
\begin{equation}\label{eq:def-Ut}
  U^{(t)}:= \bigl\{ \thetavec^{(t)} + \thetavec \!\!\pmod{2\pi} \, : \, \thetavec\in U_n(r^{-1})\bigr\}. 
\end{equation}
Let $\mathcal{B}:= \cup_{t\in [r]} U^{(t)}$. Since $F(\thetavec^{(t)} + \thetavec) = F(\thetavec)$ for all $\thetavec\in U_n(\pi)$, each of the regions $U^{(1)},\ldots, U^{(r)}$ makes an identical contribution to the integral. 
Lemmas~\ref{lem:inbox} and~\ref{L:step1} imply that under assumptions \eqref{eq:assumptions} we have
\begin{equation}
	\int_{\mathcal{B}} 
	\, F(\thetavec)\,  d \thetavec = (1+O(\eps))\,\frac{r\, \pi^{n/2}}{|A|^{1/2}}.
	\label{eq:B-integral}
\end{equation}

The integral in the region $U_n(\pi)\setminus \mathcal{B}$ is approximated
in the next result.
\begin{lemma}\label{lem:reduce-to-J0}
	If assumptions \eqref{eq:assumptions} hold then
	\[
	\int_{U_{n}(\pi)\setminus \mathcal{B}} 
	\, |F(\thetavec)|\,  d \thetavec = 
	n^{-\omega(n)}\, \frac{\pi^{n/2}}{|A|^{1/2}}. 
	\]
\end{lemma}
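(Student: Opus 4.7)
The plan is to bound $|F(\thetavec)|$ pointwise by a super-polynomially small quantity on $U_n(\pi)\setminus\calB$ and then integrate with a crude volume bound. The starting point is the elementary identity $|1+\lambda_W(e^{i\phi}-1)|^2 = 1-2\lambda_W(1-\lambda_W)(1-\cos\phi)$, which gives the factorisation
\[
 |F(\thetavec)|^2 = \prod_{W\in\rsets}\bigl(1-2\lambda_W(1-\lambda_W)(1-\cos\phi_W)\bigr),\qquad \phi_W:=\sum_{j\in W}\theta_j.
\]
Because \eqref{beta-range} implies $|\sum_{j\in W}\beta^\ast_j-\sum_{j\in W'}\beta^\ast_j| = O(1)$ for any $W,W'\in\rsets$, we have $\lambda_W(1-\lambda_W) = \Theta(\lambda(1-\lambda))$ uniformly in $W$. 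The bound $\log(1-x)\le -x$ therefore gives
\[
 |F(\thetavec)|^2 \le \exp\bigl(-c_1\,\lambda(1-\lambda)\,S(\thetavec)\bigr),\qquad S(\thetavec):=\sum_{W\in\rsets}(1-\cos\phi_W),
\]
for some absolute $c_1>0$.

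The heart of the proof is the uniform lower bound
\[
 S(\thetavec)\ge c_2\,\binom{n-1}{r-1}\big/r^2\qquad\text{for all }\thetavec\in U_n(\pi)\setminus\calB,
\]
with an absolute constant $c_2>0$. I would first use the symmetry $F(\thetavec^{(t)}+\thetavec)=F(\thetavec)$ (which preserves every $\phi_W$ modulo $2\pi$, and hence preserves $S$) to subtract off the closest $\thetavec^{(t^\ast)}$; the hypothesis $\thetavec\notin\calB$ then forces the existence of an index $j_0$ with $|\theta_{j_0}|>r^{-1}$. A short case analysis now suffices: if every other $|\theta_j|$ is at most a sufficiently small constant multiple of $r^{-2}$, then for each of the $\binom{n-1}{r-1}$ subsets $W\ni j_0$ the sum $\phi_W$ is trapped in a narrow window about $\theta_{j_0}$ that stays bounded away from $2\pi\Integers$, yielding $1-\cos\phi_W=\Omega(r^{-2})$; otherwise many coordinates already satisfy $|\theta_j|=\Omega(r^{-2})$, and an averaging/pigeonhole argument over $r$-subsets containing any such coordinate supplies $\Omega(\binom{n}{r}/r^2)$ contributions of size $\Omega(r^{-2})$ to $S$. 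Either way the stated bound holds.

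Substituting into the pointwise estimate and using $\lambda\binom{n-1}{r-1}=d$ gives
\[
 |F(\thetavec)|\le\exp\bigl(-\Omega(d(1-\lambda)/r^2)\bigr)\qquad\text{uniformly on }U_n(\pi)\setminus\calB.
\]
In the first quadrant we have $d\gg nr^4\log n$ and $1-\lambda\ge \tfrac12$, so the exponent is $\omega(nr^2\log n)$; since $r\ge 3$ this is $n^{-\omega(n)}$ with substantial room to spare. Finally, Hadamard's inequality applied to $A$ yields $|A|^{1/2}\le (\sqrt{n}\,\binom{n-1}{r-1}/2)^{n/2}\le n^{(2r-1)n/4}$, so $\pi^{n/2}/|A|^{1/2}$ is at worst $n^{-O(rn)}$. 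Multiplying the pointwise bound on $|F|$ by the crude volume $(2\pi)^n$ and comparing with this lower bound on $\pi^{n/2}/|A|^{1/2}$ gives the claimed $n^{-\omega(n)}\,\pi^{n/2}/|A|^{1/2}$.

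The main obstacle is the combinatorial lower bound on $S(\thetavec)$. The delicate case is when $\thetavec$ lies just outside a single $U^{(t^\ast)}$, so exactly one coordinate barely exceeds the threshold $r^{-1}$, which is precisely what forces the $r^{-2}$ factor; the hypothesis $d\gg nr^4\log n$ is then exactly what pushes the decay past the $n^{-\omega(n)}$ threshold against the harmless $n^{O(rn)}$ losses from the volume and from $|A|^{1/2}$. Should the direct two-case split prove unwieldy, a fallback is to dyadically decompose $U_n(\pi)\setminus\calB$ according to $\|\thetavec-\thetavec^{(t^\ast)}\|_\infty$ and run a shell-by-shell estimate, using that farther shells force a strictly larger fraction of $\phi_W$ to be bounded away from $2\pi\Integers$.
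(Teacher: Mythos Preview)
Your overall strategy—pointwise bound via $\lambda_W(1-\lambda_W)=\Theta(\lambda(1-\lambda))$, then a crude volume estimate, then comparison with $|A|^{-1/2}$—matches the paper exactly, and your endgame (using $d/r^2\gg nr^2\log n$ to beat both $(2\pi)^n$ and the $e^{O(nr\log n)}$ coming from $|A|$) is correct. Your first sub-case is also sound.

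The gap is in your second sub-case. The negation of ``every $j\ne j_0$ has $|\theta_j|\le cr^{-2}$'' produces only \emph{one} further coordinate with $|\theta_{j_1}|>cr^{-2}$, not ``many''. Even granting many such coordinates, your averaging does not give what you need: a subset $W$ containing such a coordinate need not have $|\phi_W|_{2\pi}=\Omega(r^{-1})$, since the contributions can cancel. Concretely, take $\theta_{j_0}=2/r$ and $\theta_j=-2/(r(r-1))$ for all $j\ne j_0$; then every $W\ni j_0$ has $\phi_W=0$, so the mechanism of your first sub-case contributes nothing, while each remaining coordinate is only of size $\Theta(r^{-2})$. (The true $S$ here is still large, coming from $W\not\ni j_0$, but your sketch does not isolate this.) Also, as written, your second sub-case would deliver at best $S=\Omega\bigl(\binom{n}{r}/r^4\bigr)$, which falls short of the required $\Omega\bigl(\binom{n-1}{r-1}/r^2\bigr)$ once $r^3\gg n$.

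The paper sidesteps this with a different dichotomy. Case~1: if some pair $a,b$ has $|\theta_a-\theta_b|_{2\pi}>(2r)^{-1}$, then for each of the $\binom{n-2}{r-1}=\Theta\bigl(\binom{n-1}{r-1}\bigr)$ disjoint pairs $W_1,W_2$ with $W_1\triangle W_2=\{a,b\}$ one has $|\phi_{W_1}-\phi_{W_2}|_{2\pi}>(2r)^{-1}$, so at least one of the two satisfies $|\phi_{W_i}|_{2\pi}>(4r)^{-1}$; multiplying these bounds gives $|F(\thetavec)|=e^{-\Omega(d/r^2)}$ directly. Case~2: if all pairs satisfy $|\theta_a-\theta_b|_{2\pi}\le(2r)^{-1}$, the coordinates cluster tightly, and since $\thetavec\notin\calB$ the cluster lies strictly between two consecutive points $2\pi k/r$; then $|\phi_W|_{2\pi}\ge 1/2$ for \emph{every} $W$, giving the much stronger $|F(\thetavec)|=e^{-\Omega(\lambda\binom{n}{r})}$. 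The pairing trick in Case~1 is the robust idea your argument is missing; your proposed dyadic fallback would face the same cancellation issue at every shell.
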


\bigskip

Continuing with the proof of Theorem~\ref{thm:main}, by combining Lemma~\ref{lem:reduce-to-J0} and \eqref{eq:B-integral} we obtain 
\begin{equation}
	\int_{U_n(\pi)} 
	\, F(\thetavec)\,  d \thetavec = (1+O(\eps))\,\frac{r\, \pi^{n/2}}{|A|^{1/2}}.
	\label{eq:integral}
\end{equation}
We can express
$P_r(\betavecstar)$ in a more convenient form, as follows:
\begin{align}
 P_r(\betavecstar) &\stackrel{(\ref{exact})}{=} (2\pi)^{-n}\, \frac{\prod_{W\in\rsets}\,\( 1 + e^{\sum_{j\in W}\beta^\ast_j}\)}
  { \exp\Bigl(\sum_{j\in [n]} \beta^\ast_j \,\sum_{W\ni j} \lambda_{W}\Bigr)}\nonumber\\
  &= (2\pi)^{-n}\, \frac{\prod_{W\in\rsets}\,\( 1 + e^{\sum_{j\in W}\beta^\ast_j}\)}
  { \exp\Bigl(\sum_{W\in \rsets} \lambda_{W}  \,\sum_{j\in W} \beta^\ast_j\Bigr)}\nonumber\\
  &= (2\pi)^{-n}\, \prod_{W\in\rsets}\, \frac{ 1 + e^{\sum_{j\in W}\beta^\ast_j}}
  { \exp\( \lambda_W \, \sum_{j\in W} \beta^\ast_j\)}\nonumber\\
  &= (2\pi)^{-n}\, \prod_{W\in\rsets}\, \biggl(\frac{ 1 + e^{\sum_{j\in W}\beta^\ast_j}}
  { e^{ \sum_{j\in W} \beta^\ast_j}}\biggr)^{\!\lambda_W}
  \( 1 + e^{\sum_{j\in W} \beta^\ast_j}\)^{1-\lambda_W}\nonumber\\
  &\stackrel{(\ref{eq:lambdaW-def})}{=}  (2\pi)^{-n}\, \prod_{W\in \rsets} \bigl( \lambda_W^{-\lambda_W} (1-\lambda_W)^{-(1-\lambda_W)}\bigr). \label{factor-out-front}
\end{align}

The proof of Theorem~\ref{thm:main} in the first quadrant is completed by substituting \eqref{eq:integral}
and \eqref{factor-out-front} into \eqref{eq:number-of-hypergraphs}.
The full statement of Theorem~\ref{thm:main} then follows from Lemma~\ref{lem:symmetries}.

\section{Properties of $A$ and other useful bounds}\label{s:bounds}

We will need to analyse the behaviour of $\lambda_W(\betavec)$, $\lambda(\betavec)$ and
$A(\betavec)$, not only when $\betavec$ is a solution of (\ref{exact}), but more generally.
We also need
\[
 \varLambda(\betavec) := \binom{n}{r}^{\!-1} \!\! \sum_{W \in \rsets} \lambda_W(\betavec)(1-\lambda_W(\betavec)).
\]

Recall that the elements of $A(\betavec)$ are sums of terms of the form $\lambda_W(\betavec)(1-\lambda_W(\betavec))$.
We start by establishing bounds on $\lambda_W(\betavec)$ and $1-\lambda_W(\betavec)$.

\begin{lemma}\label{lem:ratio-assist}
Denote by $f:\Reals^r\to \Reals$ the function
\[f(\xvec)=\frac{e^{\sum_{j=1}^r x_j}}{1+e^{\sum_{j=1}^r x_j}}.\]
Let $\xvec$, $\yvec$ satisfy $|x_i-y_i|\le \deltab/r$ for some constant $\deltab\ge 0$,
 and define $p:=|\{j:x_j\neq y_j\}|$. Then
\[	e^{-\deltab\, p/r} 
	 \leq \frac{f(\xvec)}{f(\yvec)} \leq 
	e^{\deltab\, p/r},\quad 
	e^{-\deltab\, p/r}  \leq \frac{1-f(\xvec)}{1-f(\yvec)} \leq 
	e^{\deltab \, p/r}. 
\]
\end{lemma}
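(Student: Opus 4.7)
The plan is to reduce the statement to a one-variable Lipschitz estimate for the sigmoid function $\sigma(u) := e^u/(1+e^u)$. First I would set $s := \sum_{j=1}^{r} x_j$ and $t := \sum_{j=1}^{r} y_j$, so that $f(\xvec) = \sigma(s)$ and $f(\yvec) = \sigma(t)$. Since $x_j = y_j$ on the $r-p$ coordinates where they agree, the triangle inequality gives
\[
 |s - t| \;\leq\; \sum_{j : x_j \neq y_j} |x_j - y_j| \;\leq\; p \cdot \frac{\deltab}{r}.
\]
Thus everything reduces to bounding $\sigma(s)/\sigma(t)$ and $(1-\sigma(s))/(1-\sigma(t))$ in terms of $|s-t|$.

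Next I would show that $\log \sigma$ is $1$-Lipschitz on $\Reals$. The computation is
\[
 \frac{d}{du}\log\sigma(u) \;=\; 1 - \sigma(u) \;\in\; (0,1),
\]
so integrating yields $|\log\sigma(s) - \log\sigma(t)| \leq |s-t|$, which exponentiates to
\[
  e^{-|s-t|} \;\leq\; \frac{\sigma(s)}{\sigma(t)} \;\leq\; e^{|s-t|}.
\]
Combining with $|s-t| \leq p\deltab/r$ gives the first pair of inequalities of the lemma.

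For the second pair, the key observation is the identity $1 - \sigma(u) = \sigma(-u)$, so that
\[
  \frac{1 - f(\xvec)}{1 - f(\yvec)} \;=\; \frac{\sigma(-s)}{\sigma(-t)},
\]
and the same Lipschitz bound applies with $|{-s}-({-t})| = |s-t| \leq p\deltab/r$. There is essentially no obstacle: the statement is a disguised one-variable bound on the log-sigmoid, and the only point requiring care is to note that the difference $s - t$ depends only on the $p$ coordinates where $\xvec$ and $\yvec$ actually differ, which is exactly what the parameter $p$ is defined to count.
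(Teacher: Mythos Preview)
Your proof is correct and takes a different, cleaner route than the paper. The paper first handles the case $p=1$ by direct algebraic manipulation of the ratio (with a case split on the sign of $x_1-y_1$ and the elementary inequality $(1+e^y)/(1+e^x)\le e^{y-x}$ for $x\le y$), then telescopes: it builds a chain $\zvec_0=\xvec,\ldots,\zvec_p=\yvec$ changing one coordinate at a time and multiplies the $p$ single-step bounds. You instead observe that $f$ depends only on the sum of coordinates, reduce to a one-variable problem, and use that $\log\sigma$ has derivative $1-\sigma(u)\in(0,1)$ to get the Lipschitz bound directly; the identity $1-\sigma(u)=\sigma(-u)$ then dispatches the second pair of inequalities with no extra work. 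Your argument is shorter and more conceptual; the paper's avoids calculus entirely and its one-coordinate-at-a-time viewpoint foreshadows the bijection arguments used later (e.g.\ in bounding ratios of matrix entries), but for the lemma itself your approach is at least as good.
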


\begin{proof}
	First suppose that $p=1$ and without loss of generality assume $x_1\neq y_1$.
	Then if $y_1\le x_1$ we have
	\[ \frac{f(\xvec)}{f(\yvec)} = \frac{e^{x_1+X}}{1+e^{x_1+X}}\cdot \frac{1+e^{y_1+X}}{e^{y_1+X}}  
	\leq e^{x_1-y_1}
	\leq e^{\deltab/r},\]
	where $X=\sum_{j=2}^r x_j=\sum_{j=2}^r y_j$. 
Observe that $\frac{1+e^y}{1+e^x}\le e^{y-x}$ whenever $x\le y$. Therefore when $y_1> x_1$,
	\[ \frac{f(\xvec)}{f(\yvec)} = \frac{e^{x_1+X}}{1+e^{x_1+X}}\cdot \frac{1+e^{y_1+X}}{e^{y_1+X}}  
	\leq \frac{1+e^{y_1+X}}{1+e^{x_1+X}}
	\le e^{y_1-x_1}
	\leq e^{\deltab/r}.\]
	As $\xvec$ and $\yvec$ are arbitrary vectors in $\Reals^r$, by symmetry we also have
	\[\frac{f(\xvec)}{f(\yvec)}\ge e^{-\deltab/r}.\]
	Similarly, 
	\[
	\frac{1-f(\xvec)}{1-f(\yvec)}=\frac{1+e^{y_1+X}}{1+e^{x_1+X}}\le \max\{e^{y_1-x_1},1\}\le e^{\deltab/r}
	\quad \mbox{and} \quad \frac{1-f(\xvec)}{1-f(\yvec)}\ge e^{-\deltab/r}.
	\]
	
	For arbitrary $\xvec, \yvec$, let $\zvec_0,\ldots, \zvec_p$
	be a sequence of elements of $\Reals^n$ with $\zvec_0=\xvec$, $\zvec_{p}=\yvec$
	such that  $\zvec_j$ and $\zvec_{j-1}$ differ in only one coordinate for $j=1,\ldots,p$.
	Then
	\[ \frac{f(\xvec)}{f(\yvec)} = \prod_{j=1}^{p} 
	\frac{f(\zvec_{j-1})}{f(\zvec_{j})}, \qquad
	\frac{1-f(\xvec)}{1-f(\yvec)} = \prod_{i=1}^{p} 
	\frac{1-f(\zvec_{j-1})}{1-f(\zvec_{j})},
	\]
	and the statement follows as there are exactly $p$ factors. 
\end{proof}

We will apply this lemma in two slightly different scenarios. First we compare $\lambda(\betavec)$
to $\lambda(\widehat\betavec)$ for two different vectors $\betavec$ and $\widehat{\betavec}$.

\begin{lemma}
	\label{lem:lambdaW-ratios-different-beta}
	Let $\betavec$ and $\widehat{\betavec}$ satisfy $\max_{j\in [n]} |\beta_j-\widehat\beta_j|\leq \deltab/r$
	for some nonnegative constant~$\deltab$.
	Then
	\[e^{-\delta}\,\lambda(\widehat{\betavec})\le \lambda(\betavec) \le e^{\delta}\,\lambda(\widehat{\betavec}).\]
\end{lemma}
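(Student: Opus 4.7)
The plan is to apply Lemma~\ref{lem:ratio-assist} pointwise to each edge $W\in\rsets$, then average over all $W$ to transfer the bound to $\lambda(\betavec)$. This reduces to a single clean observation, so the proof should be short.

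More concretely, for any $W\in\rsets$, definition~\eqref{eq:lambdaW-def} lets me write $\lambda_W(\betavec) = f\bigl((\beta_j)_{j\in W}\bigr)$ and $\lambda_W(\widehat\betavec) = f\bigl((\widehat\beta_j)_{j\in W}\bigr)$ where $f$ is the function in Lemma~\ref{lem:ratio-assist}. The hypothesis $\max_{j\in[n]}|\beta_j-\widehat\beta_j|\le\deltab/r$ implies coordinatewise $|\beta_j-\widehat\beta_j|\le\deltab/r$ on the $r$ entries indexed by $W$, and the number of differing entries $p$ satisfies $p\le r$. Hence Lemma~\ref{lem:ratio-assist} gives
\[
   e^{-\deltab}\,\lambda_W(\widehat\betavec) \;\le\; e^{-\deltab p/r}\,\lambda_W(\widehat\betavec)
   \;\le\; \lambda_W(\betavec) \;\le\; e^{\deltab p/r}\,\lambda_W(\widehat\betavec)
   \;\le\; e^{\deltab}\,\lambda_W(\widehat\betavec).
\]

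I would then sum these uniform bounds over all $W\in\rsets$ and divide by $\binom{n}{r}$. Using the definition of $\lambda(\cdot)$ as the average of the $\lambda_W(\cdot)$, this immediately yields
\[
   e^{-\deltab}\,\lambda(\widehat\betavec) \;\le\; \lambda(\betavec) \;\le\; e^{\deltab}\,\lambda(\widehat\betavec),
\]
which is the desired conclusion.

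There is no real obstacle here; the only thing to be careful about is matching the hypothesis of Lemma~\ref{lem:ratio-assist}. The bound $|\beta_j-\widehat\beta_j|\le\deltab/r$ must hold for \emph{every} $j$ (which it does, by hypothesis), and then the fact that $|W|=r$ gives the worst-case $p=r$ and cancels the $1/r$ in the exponent, producing the clean constant $e^{\pm\deltab}$. No summation estimates, no error terms, and no use of~\eqref{exact} are required.
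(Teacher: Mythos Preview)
Your proof is correct and follows exactly the same approach as the paper: apply Lemma~\ref{lem:ratio-assist} to each $W\in\rsets$ to obtain $e^{-\deltab}\lambda_W(\widehat\betavec)\le \lambda_W(\betavec)\le e^{\deltab}\lambda_W(\widehat\betavec)$, then average over $W$. The paper's proof is just a terser statement of the same argument.
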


\begin{proof}
By Lemma~\ref{lem:ratio-assist} we have for each $W\in \rsets$ that $e^{-\delta}\lambda_W(\widehat\betavec)\le \lambda_W(\betavec)\le e^{\delta}\lambda_W(\widehat\betavec)$.
The result follows from the definition of $\lambda(\betavec)$.
\end{proof}

In the second application we consider the ratios of $\lambda_W(\betavec)$ and
$\lambda_{W'}(\betavec)$ for $W,W'\in\rsets$.

\begin{lemma}
	\label{lem:lambdaW-ratios}
	Let $\betavec$ satisfy $\max_{j,k\in [n]} |\beta_j - \beta_k|\leq \deltab/r$
	for some nonnegative constant~$\deltab$.
	Then
	\begin{align*} 
		e^{-\deltab\, (1-|W\cap W'|/r)} 
		& \leq \frac{\lambda_W(\betavec)}{\lambda_{W'}(\betavec)} \leq 
		e^{\deltab\, (1-|W\cap W'|/r)},\\
		e^{-\deltab\, (1-|W\cap W'|/r)} & \leq \frac{1-\lambda_{W}(\betavec)}{1-\lambda_{W'}(\betavec)} \leq 
		e^{\deltab \, (1-|W\cap W'|/r)} 
	\end{align*}
	for all $W,W'\in\rsets$. Hence
	\[
	e^{-\deltab} \leq \frac{\lambda_{W}(\betavec)}{\lambda(\betavec)} \leq 
	e^{\deltab} \quad \text{ and } \quad 
	e^{-2\deltab} \leq \frac{\lambda_{W}(\betavec)\(1-\lambda_W(\betavec)\)}{\varLambda(\betavec)} \leq 
	e^{2\deltab}
	\]
	for all $W\in\rsets$.
\end{lemma}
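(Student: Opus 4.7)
\bigskip

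\textbf{Proof proposal for Lemma~\ref{lem:lambdaW-ratios}.}

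The plan is to reduce everything to Lemma~\ref{lem:ratio-assist}. For the first pair of inequalities, fix $W,W'\in\rsets$ and set $k:=|W\cap W'|$. Enumerate $W\cap W'=\{w_1,\ldots,w_k\}$, $W\setminus W'=\{u_1,\ldots,u_{r-k}\}$ and $W'\setminus W=\{v_1,\ldots,v_{r-k}\}$, and define
\[
  \xvec:=(\beta_{w_1},\ldots,\beta_{w_k},\beta_{u_1},\ldots,\beta_{u_{r-k}}),
  \qquad
  \yvec:=(\beta_{w_1},\ldots,\beta_{w_k},\beta_{v_1},\ldots,\beta_{v_{r-k}}).
\]
Then $\lambda_W(\betavec)=f(\xvec)$ and $\lambda_{W'}(\betavec)=f(\yvec)$ for the function $f$ in Lemma~\ref{lem:ratio-assist}. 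The vectors agree in the first $k$ coordinates, so $p:=|\{i:x_i\ne y_i\}|\le r-k$. By the hypothesis, $|x_i-y_i|\le \deltab/r$ for every $i$. Lemma~\ref{lem:ratio-assist} therefore gives
\[
  e^{-\deltab\,p/r}\le\frac{\lambda_W(\betavec)}{\lambda_{W'}(\betavec)}\le e^{\deltab\,p/r},
  \qquad
  e^{-\deltab\,p/r}\le\frac{1-\lambda_W(\betavec)}{1-\lambda_{W'}(\betavec)}\le e^{\deltab\,p/r},
\]
and since $p/r\le 1-k/r=1-|W\cap W'|/r$, the claimed inequalities follow.

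For the last two inequalities, use the already-proved bounds together with the trivial estimate $1-|W\cap W'|/r\le 1$ to get
\[
  e^{-\deltab}\,\lambda_{W'}(\betavec)\le \lambda_W(\betavec)\le e^{\deltab}\,\lambda_{W'}(\betavec)
\]
for every $W'\in\rsets$. Averaging over $W'\in\rsets$ and invoking the definition of $\lambda(\betavec)$ yields $e^{-\deltab}\le \lambda_W(\betavec)/\lambda(\betavec)\le e^{\deltab}$. The analogous bound for $\lambda_W(\betavec)(1-\lambda_W(\betavec))/\varLambda(\betavec)$ follows in the same way: multiplying the two ratio bounds of the first part gives
\[
  e^{-2\deltab}\,\lambda_{W'}(\betavec)(1-\lambda_{W'}(\betavec))\le \lambda_W(\betavec)(1-\lambda_W(\betavec))\le e^{2\deltab}\,\lambda_{W'}(\betavec)(1-\lambda_{W'}(\betavec)),
\]
and averaging over $W'\in\rsets$ gives the stated estimate in terms of $\varLambda(\betavec)$.

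The whole argument is essentially bookkeeping on top of Lemma~\ref{lem:ratio-assist}; the only mild subtlety is choosing the orderings of $W$ and $W'$ so that the first $|W\cap W'|$ coordinates of $\xvec$ and $\yvec$ coincide, which produces the improved exponent $1-|W\cap W'|/r$. There is no real obstacle.
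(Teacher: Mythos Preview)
Your proposal is correct and follows essentially the same approach as the paper: both reduce the first pair of inequalities to Lemma~\ref{lem:ratio-assist} by aligning the coordinates corresponding to $W\cap W'$, and then deduce the bounds involving $\lambda(\betavec)$ and $\varLambda(\betavec)$ by averaging over $W'$. Your write-up is simply more explicit than the paper's terse version.
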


\begin{proof}
Note that the $\beta_j$ terms corresponding to $j\in W\cap W'$ appear in both $\lambda_W(\betavec)$ and $\lambda_{W'}(\betavec)$.
Together with Lemma~\ref{lem:ratio-assist} this implies the first half of the statement. The bounds involving $\lambda(\betavec)$ and $\varLambda(\betavec)$ follow from the definitions of these quantities.
\end{proof}

We use the previous result to deduce that $\lambda(\betavec)$ and $\varLambda(\betavec)$ have the same order of magnitude when $\lambda(\betavec)$ is small enough. 

\begin{lemma}\label{lem:dlacomp}
	Let $\betavec$ satisfy $\max_{j,k\in [n]} |\beta_j-\beta_k|\leq \deltab/r$
	for a given nonnegative constant $\deltab$. If $\lambda(\betavec)\le 7/8$ then	
	\[ \frac{e^{-\deltab}}{256}\, \lambda(\betavec) \le \varLambda(\betavec) \le \lambda(\betavec). 
	\]
\end{lemma}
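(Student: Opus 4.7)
The plan is to prove the two inequalities separately. The upper bound $\varLambda(\betavec)\le \lambda(\betavec)$ is immediate: since $1-\lambda_W(\betavec)\in(0,1)$ for every $W\in\rsets$, we have $\lambda_W(\betavec)(1-\lambda_W(\betavec))\le \lambda_W(\betavec)$ termwise, and averaging over $W$ gives the claim.

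The content is in the lower bound. The key step is to produce a uniform lower bound on $1-\lambda_W(\betavec)$ over all $W\in\rsets$, and then factor it out of the defining sum of $\varLambda(\betavec)$. For this I would invoke Lemma~\ref{lem:lambdaW-ratios}, which in particular asserts
\[
   e^{-\deltab}\le \frac{1-\lambda_W(\betavec)}{1-\lambda_{W'}(\betavec)}\le e^{\deltab}\quad \text{for all } W,W'\in\rsets,
\]
since $1-|W\cap W'|/r\le 1$. Hence $\max_W(1-\lambda_W(\betavec))\le e^{\deltab}\min_W(1-\lambda_W(\betavec))$. The average of the $(1-\lambda_W(\betavec))$ over $W\in\rsets$ equals $1-\lambda(\betavec)$, which by the hypothesis $\lambda(\betavec)\le\tfrac{7}{8}$ is at least $\tfrac18$. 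Since this average lies between the minimum and the maximum,
\[
   \min_{W\in\rsets}(1-\lambda_W(\betavec))\;\ge\; e^{-\deltab}\,(1-\lambda(\betavec))\;\ge\; \tfrac18\,e^{-\deltab}.
\]

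Pulling this uniform bound out of the definition of $\varLambda(\betavec)$ then yields
\[
   \varLambda(\betavec)
   = \binom{n}{r}^{\!-1}\sum_{W\in\rsets}\!\lambda_W(\betavec)(1-\lambda_W(\betavec))
   \ge \min_{W}(1-\lambda_W(\betavec))\cdot \lambda(\betavec)
   \ge \frac{e^{-\deltab}}{8}\,\lambda(\betavec),
\]
which is stronger than the claimed bound (the constant $\tfrac{1}{8}$ replaces $\tfrac{1}{256}$). I do not anticipate a real obstacle: the argument reduces to one invocation of the ratio estimate from Lemma~\ref{lem:lambdaW-ratios} together with the assumption $\lambda(\betavec)\le\tfrac78$, and the rather loose constant $\tfrac{1}{256}$ in the statement is presumably chosen because a tight constant is not needed later.
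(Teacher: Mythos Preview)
Your argument is correct and in fact yields the sharper constant $\tfrac{1}{8}$ in place of $\tfrac{1}{256}$. The route, however, differs from the paper's. You obtain a \emph{uniform} lower bound $\min_W(1-\lambda_W(\betavec))\ge e^{-\deltab}(1-\lambda(\betavec))$ directly from the ratio estimate of Lemma~\ref{lem:lambdaW-ratios} applied to the quantities $1-\lambda_W$, and then factor this out of the defining sum for $\varLambda(\betavec)$. The paper instead uses a level-set argument: it sets $S=\{W:\lambda_W(\betavec)>15/16\}$, shows that $|S|>\tfrac{15}{16}\binom nr$ would force $\lambda(\betavec)>7/8$, and on the complement uses $1-\lambda_W\ge\tfrac1{16}$ together with $\lambda_W\ge e^{-\deltab}\lambda(\betavec)$ from Lemma~\ref{lem:lambdaW-ratios}; multiplying the three factors $\tfrac1{16}\cdot\tfrac1{16}\cdot e^{-\deltab}$ gives the $e^{-\deltab}/256$. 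Your approach is more direct and loses less in the constant because it exploits the full force of the ratio bound on $1-\lambda_W$ rather than only a threshold, whereas the paper's argument avoids needing that part of Lemma~\ref{lem:lambdaW-ratios} and works with a crude pointwise bound on the complement of~$S$. Either way the constant is irrelevant downstream.
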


\begin{proof}
	The upper bound holds as 
	\[\binom{n}{r} \varLambda(\betavec) = \sum_{W \in \rsets} \lambda_W(\betavec)(1-\lambda_W(\betavec))
	 \le \sum_{W \in \rsets} \lambda_W(\betavec) = \binom{n}{r}\lambda(\betavec).\]
	
	Now consider the set $S=\{W \in \rsets:\lambda_W(\betavec)>\frac{15}{16}\}$.
	First assume that $|S|\leq \frac{15}{16}\,|\rsets|$. Then
	\[\binom{n}{r} \varLambda(\betavec)\ge \sum_{W \in \rsets\setminus S} \lambda_W(\betavec)(1-\lambda_W(\betavec))\stackrel{L.\ref{lem:lambdaW-ratios}}{\ge}
	 \sum_{W \in \rsets\setminus S} e^{-\deltab}\, \lambda(\betavec) \dfrac{1}{16}\ge \frac{e^{-\deltab}}{256}\lambda(\betavec)\binom{n}{r}.\]
	On the other hand if $|S|> \frac{15}{16}\, |\rsets|$, then
	\[\lambda(\betavec) \binom{n}{r} = \sum_{W \in S} \lambda_W(\betavec)> \left(\dfrac{15}{16}\right)^2 \binom{n}{r} > \dfrac{7}{8}\binom{n}{r},\]
	contradicting our assumption.
\end{proof}

Now we turn to the matrix $A(\betavec)$ and establish that the diagonal entries
are relatively close to each other, and similarly for the off-diagonal entries.

\begin{lemma}\label{lem:A-entries-tight}
	Let $\betavec$ satisfy $\max_{j,k\in [n]} |\beta_j-\beta_k|\leq \deltab/r$
	for some nonnegative constant $\deltab$.
	Then the entries of $A(\betavec)=(a_{jk})$ satisfy
	\[ e^{-4\deltab/r} \leq \frac{a_{jk}}{a_{j'k'}} \leq e^{4\deltab/r},
	\qquad
	e^{-4\deltab/r} \leq \frac{a_{jj}}{a_{kk}} \leq e^{4\deltab/r}\]
	for any $j,k,j',k'\in [n]$ with $j\neq k$ and $j'\neq k'$.
	Furthermore,
	\begin{align*}
		\dfrac{1}{2}\, e^{-4\deltab/r} \varLambda(\betavec) \binom{n-2}{r-2} &\leq a_{jk}
		\leq 
		\dfrac{1}{2}\, e^{4\deltab/r} \varLambda(\betavec) \binom{n-2}{r-2},\\[1ex]  
		\dfrac{1}{2}\, e^{-4\deltab/r} \varLambda(\betavec) \binom{n-1}{r-1} &\leq  a_{jj}
		\leq 
		\dfrac{1}{2}\, e^{4\deltab/r} \varLambda(\betavec) \binom{n-1}{r-1}. 
	\end{align*}
\end{lemma}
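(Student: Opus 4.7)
The plan is to establish the ratio bounds first using combinatorial bijections and Lemma~\ref{lem:ratio-assist}, and then to upgrade these into the claimed absolute bounds by combining them with double-counting identities that pin down the mean values of the matrix entries.

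For the off-diagonal ratio $a_{jk}/a_{j'k'}$, I will construct a permutation $\sigma$ of $[n]$ consisting of at most two transpositions that sends $\{j,k\}$ onto $\{j',k'\}$ and fixes every other vertex. Explicitly: take $\sigma=(j\ j')(k\ k')$ when $\{j,k\}\cap\{j',k'\}=\emptyset$, a single transposition when the pairs share exactly one element, and the identity when the pairs coincide. In every case $W\mapsto\sigma(W)$ is a bijection from $\{W\in\rsets : W\supset\{j,k\}\}$ onto $\{W\in\rsets : W\supset\{j',k'\}\}$, and a short case analysis (according to how many of $j',k'$ already belong to $W$) shows that $|W\cap\sigma(W)|\ge r-2$ in every case. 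Aligning the indices in $W\cap\sigma(W)$ to occupy common positions in the vectors representing $\lambda_W$ and $\lambda_{\sigma(W)}$, those vectors then differ in at most $p\le 2$ coordinates, so Lemma~\ref{lem:ratio-assist} applied to both $f$ and $1-f$ gives
\[
   e^{-4\deltab/r}\le \frac{\lambda_W(1-\lambda_W)}{\lambda_{\sigma(W)}(1-\lambda_{\sigma(W)})}\le e^{4\deltab/r}.
\]
Summing over $W\supset\{j,k\}$ then yields the claimed bound on $a_{jk}/a_{j'k'}$. The diagonal ratio $a_{jj}/a_{kk}$ is handled identically using the single transposition $\sigma=(j\ k)$, which produces $p\le 1$ and hence a bound even tighter than the one stated.

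To obtain the absolute bounds, I use the double-counting identities
\[
   \sum_{\substack{\{j,k\}\subseteq[n]\\ j\ne k}} 2a_{jk} = \binom r2\sum_{W\in\rsets}\lambda_W(1-\lambda_W) = \binom r2\binom nr\varLambda(\betavec),\qquad
   \sum_{j\in[n]} 2a_{jj} = r\binom nr\varLambda(\betavec),
\]
which show that the mean of $2a_{jk}$ over unordered pairs equals $\binom{n-2}{r-2}\varLambda(\betavec)$, and the mean of $2a_{jj}$ over $j\in[n]$ equals $\binom{n-1}{r-1}\varLambda(\betavec)$. The elementary observation that a family of positive numbers whose pairwise ratios all lie in $[C^{-1},C]$ has every member within a factor of $C$ of its mean then converts the ratio bounds (with $C=e^{4\deltab/r}$) into the stated lower and upper bounds on $a_{jk}$ and $a_{jj}$.

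The main obstacle is the off-diagonal case analysis. One has to verify that $\sigma$ really is a bijection onto the target family and that the alignment parameter $p$ is at most $2$ for every overlap pattern of $\{j,k\}$ with $\{j',k'\}$, including awkward sub-cases such as $k=j'$ or where $W$ already contains one of $j',k'$. Once this combinatorial bookkeeping is dispatched, both the ratio bounds and the subsequent averaging argument are essentially immediate.
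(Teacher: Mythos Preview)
Your argument is correct and follows essentially the same route as the paper: both build the off-diagonal ratio bound via a bijection given by at most two transpositions (the paper's map $b$ is exactly your $\sigma$), then feed $|W\cap\sigma(W)|\ge r-2$ into the ratio lemma to get the factor $e^{\pm 4\delta/r}$. The only cosmetic differences are that the paper invokes Lemma~\ref{lem:lambdaW-ratios} rather than Lemma~\ref{lem:ratio-assist} directly, and derives the diagonal and absolute bounds from the identities $a_{jj}=\frac{1}{r-1}\sum_{k\ne j}a_{jk}$ and $\frac{1}{n}\sum_j a_{jj}=\frac12\binom{n-1}{r-1}\varLambda(\betavec)$ rather than your separate transposition argument and global double-counting, but these amount to the same averaging step.
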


\begin{proof}
	We start with the case when $j\neq k$ and $j'\neq k'$. 
	Let $S_{jk}=\{W\in \rsets:W\supset \{j,k\}\}$. Recall that
	\[
	a_{jk}=\dfrac{1}{2}\sum_{W\in S_{jk}}\lambda_{W}(\betavec)(1-\lambda_{W}(\betavec)) \quad \mbox{and} \quad a_{j'k'}=\dfrac{1}{2}\sum_{W'\in S_{j'k'}}\lambda_{W'}(\betavec)(1-\lambda_{W'}(\betavec)).
	\]
	Both $S_{jk}$ and $S_{j'k'}$ contain exactly $\binom{n-2}{r-2}$ elements. 
We will show that there exists a bijection $\zeta:S_{j,k}\rightarrow S_{j'k'}$ 
such that for every pair $(W,W')$ with $W'=\zeta(W)$, we have
	\[  e^{-4\deltab/r}\lambda_{W'}(\betavec)(1-\lambda_{W'}(\betavec)) 
	\le \lambda_{W}(\betavec)(1-\lambda_{W}(\betavec)) 
	\le e^{4\deltab/r}\lambda_{W'}(\betavec)(1-\lambda_{W'}(\betavec)). \]
	By Lemma~\ref{lem:lambdaW-ratios}, this follows if
$|W\cap \zeta(W)|\ge r-2$ for all $W\in S_{jk}$.
	
	We can assume that either $\{j,k\}\cap\{j',k'\}=\emptyset$ or $j=j'$.
	Now consider the function $b:V\to V$, which is the identity for every vertex in $V\setminus \{j,k,j',k'\}$ and switches $j$ with $j'$ and $k$ with $k'$. This function can be extended to a function $\zeta:S_{jk}\to S_{j'k'}$ by assigning to each set $W\in S_{jk}$ the set $\{b(j) : j\in W\}$. Clearly $b$ is a bijection and so is $\zeta$, and $|W\cap \zeta(W)|\geq r-2$ for all $W\in S_{jk}$, as required. 
		
	The remaining results follow as
	\[
	a_{jj}=\frac{1}{r-1}\sum_{\substack{k=1\\k\neq j}}^{n} a_{jk} \quad \mbox{and} \quad \varLambda(\betavec)=\frac{1}{n}\sum_{j=1}^n a_{jj},
	\]
completing the proof.
\end{proof}

We also establish an upper bound on the determinant of $A(\betavec)$.
It follows easily from
the Matrix Determinant Lemma (see for example~\cite[equation (6.2.3)]{meyer}) that 
for any real numbers $a,b$,
\begin{equation}
\label{eq:aIbJ}
 |aI + bJ| = a^{n-1}(a + bn)
\end{equation}
where $I$ is the $n\times n$ identity matrix and $J$ is the $n\times n$ matrix with
every entry equal to one.

\begin{lemma}\label{lem:detA}
	Let $\betavec$ satisfy $\max_{j,k\in [n]} |\beta_j-\beta_k|\leq \deltab/r$
	for some nonnegative constant $\deltab$.
	Then
	\[
	|A(\betavec)|
	    =\exp\left(O(n)\log\left(\varLambda(\betavec) \binom{n-1}{r-1}\right)\right).
	\]
\end{lemma}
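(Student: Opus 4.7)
The plan is to apply Hadamard's inequality for positive semi-definite matrices, which states that $|M|\leq \prod_j m_{jj}$ whenever $M$ is PSD. First, I would verify that $A(\betavec)$ is PSD. The two cases of~(\ref{eq:A-def}) can be combined into the single expression
\[
a_{jk} = \tfrac{1}{2}\sum_{W\in\rsets}\lambda_W(\betavec)\bigl(1-\lambda_W(\betavec)\bigr)\mathbf{1}[j\in W]\,\mathbf{1}[k\in W],
\]
valid for all $j,k\in[n]$ (including $j=k$), from which a direct computation gives
\[
\xvec\trans A(\betavec)\,\xvec = \tfrac{1}{2}\sum_{W\in\rsets}\lambda_W(\betavec)\bigl(1-\lambda_W(\betavec)\bigr)\biggl(\sum_{j\in W}x_j\biggr)^{\!2} \geq 0.
\]

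Hadamard's inequality then yields $|A(\betavec)|\leq \prod_{j\in[n]}a_{jj}$. The diagonal bound from Lemma~\ref{lem:A-entries-tight} gives $a_{jj}\leq \tfrac{1}{2}e^{4\deltab/r}\,\varLambda(\betavec)\binom{n-1}{r-1}$, so
\[
|A(\betavec)| \leq \bigl(\tfrac{1}{2}e^{4\deltab/r}\bigr)^{\!n}\biggl(\varLambda(\betavec)\binom{n-1}{r-1}\biggr)^{\!n}.
\]
Taking logarithms gives $\log|A(\betavec)|\leq n\log\bigl(\varLambda(\betavec)\binom{n-1}{r-1}\bigr) + O(n)$, which matches the claimed form $O(n)\log\bigl(\varLambda(\betavec)\binom{n-1}{r-1}\bigr)$ whenever $\varLambda(\betavec)\binom{n-1}{r-1}$ is bounded away from~$1$. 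The latter holds in the regime of the main theorem, where the assumption $d\gg n$ together with Lemma~\ref{lem:dlacomp} (applied to $\lambda$, or to $1-\lambda$ via the symmetries of Lemma~\ref{lem:symmetries}) forces $\varLambda(\betavec)\binom{n-1}{r-1}$ to be at least a positive power of~$n$.

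I do not anticipate any real obstacle: PSD Hadamard reduces everything to a product of diagonal entries that Lemma~\ref{lem:A-entries-tight} has already controlled, so the off-diagonal structure of $A(\betavec)$ plays no role in the upper bound. The only minor bookkeeping point is the absorption of the additive $O(n)$ term into $O(n)\log\bigl(\varLambda(\betavec)\binom{n-1}{r-1}\bigr)$, which is handled by the regime-of-interest observation above.
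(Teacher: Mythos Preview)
Your argument is correct and gives a cleaner route than the paper's. The paper proceeds by bounding the quadratic form
\[
\xvec\trans A(\betavec)\,\xvec \le \tfrac12 e^{2\deltab}\varLambda(\betavec)\sum_{W\in\rsets}\Bigl(\sum_{j\in W}x_j\Bigr)^{\!2} = \xvec\trans A'\,\xvec,
\]
where $A'=\tfrac12 e^{2\deltab}\varLambda(\betavec)\bigl(\binom{n-2}{r-1}I+\binom{n-2}{r-2}J\bigr)$, then invokes the min--max theorem to dominate each eigenvalue of $A(\betavec)$ by the corresponding eigenvalue of $A'$, and finally computes $|A'|$ via the closed form for $|aI+bJ|$. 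Your use of Hadamard's inequality bypasses the eigenvalue comparison entirely: once $A(\betavec)$ is PSD (which you verify via the same quadratic-form identity), the determinant is bounded by the product of diagonal entries, and those are already controlled by Lemma~\ref{lem:A-entries-tight}. The paper's approach has the minor advantage of making the comparison matrix $A'$ explicit (a scaled version of the regular-case matrix), but for the purpose of this lemma your approach is shorter and requires less machinery. The bookkeeping point you flag---absorbing the additive $O(n)$ into $O(n)\log\bigl(\varLambda(\betavec)\binom{n-1}{r-1}\bigr)$---is present in the paper's argument as well and is handled identically.
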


\begin{proof}
	Note that for any $\xvec \in \Reals^n$ we have
	\begin{align*}
	\xvec\trans A(\betavec)\, \xvec
	 &=\dfrac{1}{2}\sum_{W\in \rsets}\! \lambda_W(\betavec)(1-\lambda_W(\betavec))\biggl(\,\sum_{j\in W}x_j\biggr)^{\!2} \\
	&{}\stackrel{L.\ref{lem:lambdaW-ratios}}{\le}
	 \dfrac{1}{2}e^{2\deltab}\varLambda(\betavec)\sum_{W\in \rsets}\biggl(\,\sum_{j\in W}x_j\biggr)^{\!2}\!=\xvec\trans A'\, \xvec,
	\end{align*}
	where $A'=\frac{1}{2}e^{2\deltab}\varLambda(\betavec) \Bigl(\binom{n-2}{r-1}I+ \binom{n-2}{r-2}J\Bigr)$.
	Therefore, by the min-max theorem, the $k$-th largest eigenvalue of $A(\betavec)$ is at most the $k$-th largest eigenvalue of $A'$.
	Since $A(\betavec)$ is positive semidefinite, all its eigenvalues are non-negative, implying that $|A(\betavec)|\le |A'|$. Using (\ref{eq:aIbJ}), we have
	\[
	|A'|=\exp\left(O (n)\log\left(\varLambda(\betavec) \binom{n-1}{r-1}\right)\right)
	\]
	and the result follows.
\end{proof}

\subsection{Inverting $A(\betavec)$}\label{s:diagonalisation}

Next we bound the entries of $A(\betavec)^{-1}$ and find a change of basis matrix $T$
which transforms $A(\betavec)$ to the identity matrix.
For $p\in \{1,2,\infty\}$, we use the notation $\norm{\cdot}_p$ for the standard 
vector norms and the corresponding induced matrix norms 
(see for
example~\cite[Section~5.6]{HJ}).  In particular,  for an $n\times n$
matrix $M = (m_{ij})$,
\[ 
  \norm{M}_1 = \max_{j\in [n]} \,\sum_{i\in [n]} |m_{ij}|,\qquad
   \norm{M}_\infty = \max_{i\in [n]} \, \sum_{j\in [n]} |m_{ij}|.
\]

The proof of this lemma is given in Section~\ref{s:proof-diaggeneral-r}.

\begin{lemma}\label{lem:diaggeneral-r}
	Let $\deltab$ be a nonnegative constant. 
	For every $\betavec$ such that $\max_{j,k\in [n]} |\beta_j - \beta_k|\leq \deltab/r$ the following holds.

	Let $A(\betavec)^{-1}=(\sigma_{jk})$ be the inverse of $A(\betavec)$.
	There exists a constant $C$, independent of $\deltab$, such that for $n\ge 16e^{4\deltab}$ we have
		\begin{equation}\label{sigmabound}
	|\sigma_{jk}|\le 
	\begin{cases}
        \,\displaystyle \frac{Ce^{35 \deltab}}{\varLambda(\betavec) \binom{n-1}{r-1}},  & \mbox{if } j=k; \\[3ex]
	\,\displaystyle \frac{Ce^{35 \deltab}}{\varLambda(\betavec) \binom{n-1}{r-1}n}, & \mbox{otherwise.}
	\end{cases}
	\end{equation}
	In addition, there exists a matrix $T=T(\betavec)$ such that $T\trans A(\betavec)\, T=I$ with
	\[
	\norm{T}_1, \norm{T}_\infty 
	 =O\biggl(\varLambda(\betavec)^{-1/2} \binom{n-1}{r-1}^{\!-1/2}\,\biggr).
	\]
	Furthermore, for any $\rho>0$ there exists 
$\rho_1,\rho_2=\Theta\Bigl(\rho\, \varLambda(\betavec)^{1/2}\, \binom{n-1}{r-1}^{1/2} \Bigr)$ such that
	\[
	T\( U_n(\rho_1)\) \subseteq \calR(\rho) \subseteq T\(U_n(\rho_2)\),
	\]
	where $\calR(\rho)$ is defined in~\eqref{eq:R-def}.
\end{lemma}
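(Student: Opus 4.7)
The strategy is to compare $A=A(\betavec)$ with the explicit matrix $A_0 := a_0 I + b_0 J$ where $a_0 := \tfrac12\varLambda(\betavec)\binom{n-2}{r-1}$ and $b_0 := \tfrac12\varLambda(\betavec)\binom{n-2}{r-2}$. By Pascal's identity, $A_0$ has diagonal $\tfrac12\varLambda(\betavec)\binom{n-1}{r-1}$ and off-diagonal $b_0$, exactly the ``centers'' that Lemma~\ref{lem:A-entries-tight} clusters the entries of $A$ around within a multiplicative factor $e^{\pm 4\deltab/r}$. All three claims of the lemma then reduce to a direct calculation for the explicit matrix $A_0$, followed by a perturbation argument.

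For the entry bounds~\eqref{sigmabound}, I would first apply the closed-form identity $(aI+bJ)^{-1} = a^{-1}I - b(a(a+bn))^{-1}J$ to $A_0$. Using the binomial identities $\binom{n-2}{r-1} = \tfrac{n-r}{n-1}\binom{n-1}{r-1}$ and $(n-1)\binom{n-2}{r-2} = (r-1)\binom{n-1}{r-1}$, one obtains $a_0+nb_0 = \tfrac{r}{2}\varLambda(\betavec)\binom{n-1}{r-1}$; substitution then gives diagonal entries of $A_0^{-1}$ equal to $\tfrac{2(rn-2r+1)}{r(n-r)\varLambda\binom{n-1}{r-1}}$ and off-diagonal entries equal to $-\tfrac{2(r-1)}{r(n-r)\varLambda\binom{n-1}{r-1}}$. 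In the first quadrant ($r\le n/2$, so $n-r\ge n/2$), both quantities match~\eqref{sigmabound} with explicit constants. To move from $A_0$ to $A$, I would write $A = A_0 + E$; by Lemma~\ref{lem:A-entries-tight}, $|E_{jk}| \le (e^{4\deltab/r}-1)|(A_0)_{jk}|$. The hypothesis $n\ge 16e^{4\deltab}$ forces $\|A_0^{-1}E\|_\infty \le \tfrac12$, so the Neumann expansion $A^{-1} = A_0^{-1} - A_0^{-1}EA_0^{-1} + A_0^{-1}EA_0^{-1}EA_0^{-1} - \cdots$ converges geometrically; an entrywise bookkeeping (using that each application of $A_0^{-1}$ preserves the diagonal-dominant structure with off-diagonal/diagonal ratio $O(1/n)$) yields~\eqref{sigmabound} with an $e^{O(\deltab)}$ factor.

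For the matrix $T$, I would take $T$ to be the unique symmetric positive-definite square root of $A^{-1}$, so that $T\trans AT = I$ by construction. Since the eigenspaces of $A_0$ are $\mathrm{span}(\mathbf{1})$ with eigenvalue $a_0+nb_0$ and its orthogonal complement with eigenvalue $a_0$, we have $A_0^{-1/2} = \alpha I + \beta J$ with $\alpha = 1/\sqrt{a_0}$ and $\beta = \tfrac{1}{n}\bigl(1/\sqrt{a_0+nb_0}-1/\sqrt{a_0}\bigr)\in[-\alpha/n,0]$. The row (or column) sum of absolute values is therefore at most $\alpha + (n-1)|\beta| \le 2\alpha = O(1/\sqrt{\varLambda\binom{n-1}{r-1}})$, matching the claimed bound for the ideal case; a perturbation argument (for instance, via the integral representation $A^{-1/2} = \pi^{-1}\int_0^\infty (A+s^2I)^{-1}\,ds$ combined with the resolvent estimates from the previous step) then transfers the bound to $T$. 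The region containment follows by direct computation: for $\xvec\in U_n(\rho_1)$, $\|T\xvec\|_\infty \le \|T\|_\infty\rho_1$, and using $A_0^{-1/2}\mathbf{1} = \sqrt{2/(r\varLambda\binom{n-1}{r-1})}\,\mathbf{1}$ one obtains $|\mathbf{1}\trans T\xvec| = O(n\rho_1/\sqrt{r\varLambda\binom{n-1}{r-1}})$. Choosing $\rho_1 = c\rho\sqrt{\varLambda\binom{n-1}{r-1}}$ for sufficiently small constant $c$ then places $T\xvec$ inside $\calR(\rho)$; the reverse inclusion is established analogously via $A_0^{1/2} = \sqrt{a_0}(I-\tfrac{1}{n}J) + \sqrt{a_0+nb_0}\,\tfrac{1}{n}J$, which gives $|(A^{1/2}\thetavec)_j| \le \sqrt{a_0}\rho + (\sqrt{a_0+nb_0}/n)\cdot nr^{-1/2}\rho = O(\rho\sqrt{\varLambda\binom{n-1}{r-1}})$.

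The main obstacle is the perturbation analysis underlying the first claim: standard perturbation theory yields only operator-norm control of $A^{-1}-A_0^{-1}$, whereas the lemma requires \emph{entrywise} bounds in which the off-diagonal entries of $A^{-1}$ remain a factor of $n$ smaller than the diagonal entries. The crucial observation, which must be carefully propagated through each term of the Neumann series, is that $A_0^{-1}$ itself exhibits this diagonal-dominant structure and that the structure is preserved under multiplication on either side by $A_0^{-1}E$. The $e^{35\deltab}$ factor appearing in~\eqref{sigmabound} absorbs the accumulated constants from the $e^{4\deltab/r}$ entrywise estimates of Lemma~\ref{lem:A-entries-tight}, the comparison between individual $\lambda_W(1-\lambda_W)$ and $\varLambda(\betavec)$, and the multiplicative factors that accrue through the terms of the Neumann expansion.
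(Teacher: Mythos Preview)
Your decomposition $A=A_0+E$ with $A_0=a_0I+b_0J$ is natural, and your computation of $A_0^{-1}$ is correct. However, the key step ``the hypothesis $n\ge 16e^{4\deltab}$ forces $\|A_0^{-1}E\|_\infty\le\tfrac12$'' is false, and this breaks the Neumann-series argument. A direct calculation gives
\[
  \|A_0^{-1}E\|_\infty \le \frac{e^{4\deltab/r}-1}{a_0}\bigl((a_0+nb_0)+nb_0\bigr)
  \le 2\,(e^{4\deltab/r}-1)\,\frac{a_0+nb_0}{a_0}
  = 2\,(e^{4\deltab/r}-1)\,\frac{r(n-1)}{n-r},
\]
and with $r\le n/2$ and $e^{x}-1\le xe^{x}$ this is at most $16\deltab\,e^{4\deltab/r}$. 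The bound is \emph{independent of $n$}; for a fixed $\deltab$ that is not small (the lemma allows any nonnegative constant $\deltab$) this quantity is bounded but need not be below~$1$, so the series $\sum_k(-A_0^{-1}E)^k A_0^{-1}$ need not converge. The same obstruction reappears in the spectral norm: $\|A_0^{-1/2}EA_0^{-1/2}\|_2$ is also $\Theta(\deltab)$, driven by the off-diagonal part of~$E$, so the integral representation you propose for $A^{-1/2}$ would not yield the required control either.

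The paper sidesteps this by a different decomposition, $A=D+\svec\svec\trans+X$, where $D=\diag(a_{jj}-c^2)$ absorbs the diagonal of $A$ exactly, $\svec=(c,\ldots,c)\trans$ with $c^2=\tfrac12\varLambda\binom{n-2}{r-2}$, and $X$ has zero diagonal. It then invokes a lemma from~\cite{mother} (stated here as Lemma~\ref{diagonal}) which does \emph{not} require the perturbation parameter $\kappa:=B^2\Dmin^{-1}n\maxnorm{X}$ to be small: the conclusions hold for any $\kappa$, with the bounds degrading polynomially in $\kappa$ and $\gamma^{-1}$. That lemma uses the positive-definiteness lower bound $\xvec\trans M\xvec\ge\gamma\,\xvec\trans\xvec$ directly rather than a geometric series, which is precisely what allows $\kappa=O(e^{O(\deltab)})$ and produces the $e^{35\deltab}$ factor. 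To repair your argument you would either need to show that $A_0^{-1}E$ has spectral radius below~$1$ (not merely bounded norm), or replace the Neumann expansion by an argument that exploits the lower spectral bound $A\succeq e^{-2\deltab}\varLambda\bigl(\binom{n-2}{r-1}I+\binom{n-2}{r-2}J\bigr)/2$ directly.
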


\section{Evaluating the integral}\label{s:evaluate}

In this section we prove Lemmas~\ref{lem:inbox}--\ref{lem:reduce-to-J0}.  
We have already seen that these lemmas establish Theorem~\ref{thm:main}.

Throughout this section we assume that \eqref{eq:assumptions} holds and thus $\lambda=\binom{n-1}{r-1}^{-1}d \le \nfrac{1}{2}$.
Therefore, by Lemma~\ref{lem:dlacomp}, for $\varLambda:=\varLambda(\betavecstar)$ we have
\begin{equation}\label{eq:dlacomp}
	\varLambda\, \binom{n-1}{r-1}=\Theta\left(\lambda \binom{n-1}{r-1}\right)=\Theta(d).
\end{equation} 

\subsection{Proof of Lemma~\ref{lem:inbox}}

First, we will estimate the integral of $F(\thetavec)$ over $\calR (d^{-1/2} \log{n})$.
For $\xi\in[0,1]$ and $x\in[-1,1]$, $\abs{\xi(e^{ix}-1)}$ is bounded
below~1 and the fifth derivative of $\log \(1 + \xi(e^{ix}-1)\)$ with respect
to $x$ is uniformly~$O(\xi)$.  Using the principal
branch of the logarithm in this domain, we have by Taylor's theorem that uniformly
\begin{equation}\label{eq:truncTaylor}
    \log \(1 + \xi(e^{ix}-1)\) = \sum_{p=1}^4 i^p c_p(\xi) \,x^p
         + O(\xi)\,\abs{x}^5,
\end{equation}
where the coefficients are 
\begin{align*} c_1(\xi) := \xi,\qquad
c_2(\xi)&:= \dfrac{1}{2}\xi(1-\xi),\qquad
c_3(\xi):= \dfrac{1}{6}\xi(1-\xi)(1-2\xi),\\
c_4(\xi)&:= \dfrac{1}{24}\xi(1-\xi)(1-6\xi + 6\xi^2).
\end{align*}

\begin{lemma}\label{lem:truncate}
	Let $\rho:=d^{-1/2}\, \log n$.
        Then, for $\thetavec\in U_n(\rho)$, we have
        \[
		\log F(\thetavec)  
		=  -   \thetavec\trans A \,\thetavec
		+ \sum_{p=3}^4\,
		\sum_{W\in\rsets}\!\!  i^p c_p(\lambda_W)\, \biggl(\, \sum_{j\in W} \theta_j\biggr)^{\!p}
		+  O\biggl( \frac{n  r^{4}\log^5 n}{d^{3/2}}\biggr). 
	\]
\end{lemma}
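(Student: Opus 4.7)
The plan is to apply the Taylor expansion \eqref{eq:truncTaylor} factor-by-factor to the product defining $F(\thetavec)$, after first verifying that its hypotheses are satisfied on the region $U_n(\rho)$. Writing $x_W := \sum_{j\in W}\theta_j$ for $W\in\rsets$, the assumption $\thetavec\in U_n(\rho)$ with $\rho=d^{-1/2}\log n$ gives $\abs{x_W}\le r\rho$. From \eqref{eq:assumptions} we have $d\gg nr^4\log n$, so $r\rho = r\log n/\sqrt d \ll 1/(r\sqrt{n})$, and in particular $\abs{x_W}\le 1$. Hence \eqref{eq:truncTaylor} applies uniformly with $\xi=\lambda_W\in[0,1]$ and $x=x_W$, yielding
\[
   \log\bigl(1+\lambda_W(e^{ix_W}-1)\bigr)
     = \sum_{p=1}^{4} i^p\, c_p(\lambda_W)\, x_W^{\,p} + O(\lambda_W)\,\abs{x_W}^5.
\]

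Next, summing this over $W\in\rsets$ and subtracting $i\sum_{j\in[n]} d_j\theta_j$ (from the denominator of \eqref{eq:F-def}) gives the desired expansion up to treating the $p=1$ and $p=2$ terms. For $p=1$, the contribution is
\[
   i\sum_W \lambda_W\, x_W
     = i\sum_{j\in[n]} \theta_j \sum_{W\ni j} \lambda_W
     = i\sum_{j\in[n]} d_j\,\theta_j,
\]
using \eqref{exact}, which cancels exactly with $-i\sum_{j\in[n]} d_j\theta_j$. For $p=2$, expanding $x_W^2$ and swapping the order of summation yields
\[
   i^2 \sum_W c_2(\lambda_W)\, x_W^{\,2}
  = -\sum_{j\in[n]} \theta_j^{\,2}\, a_{jj}
     - 2\!\!\sum_{1\le j<k\le n}\!\! \theta_j\theta_k\, a_{jk}
  = -\thetavec\trans A\,\thetavec,
\]
by comparison with the definition \eqref{eq:A-def} of $A=A(\betavecstar)$.

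It remains to bound the accumulated error $\sum_W O(\lambda_W)\abs{x_W}^5$. Using the uniform bound $\abs{x_W}\le r\rho$ and the identity $\sum_{W\in\rsets}\lambda_W = \binom nr\lambda = nd/r$ (which follows from \eqref{eq:dslacomp}), we obtain
\[
   \sum_{W\in\rsets} \lambda_W\,\abs{x_W}^5
     \le r^5\rho^5 \cdot \frac{nd}{r}
     = r^4\, n\, d\, (d^{-1/2}\log n)^5
     = \frac{n\, r^4\, \log^5 n}{d^{3/2}},
\]
which matches the claimed error term. Combining the cancellations for $p=1,2$ with the retained $p=3,4$ terms completes the proof.

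The only mildly subtle point is the bookkeeping in the $p=2$ step: the factor of $\tfrac12$ in $c_2(\lambda_W)$ must be correctly tracked against the $\tfrac12$ in the diagonal entries of $A$ and against the factor of $2$ arising from $(\sum\theta_j)^2 = \sum\theta_j^2 + 2\sum_{j<k}\theta_j\theta_k$, so that the off-diagonal entries come out as written. Everything else is a straightforward application of Taylor's theorem together with the defining equations for $\betavecstar$ and $A$.
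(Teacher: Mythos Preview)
Your proof is correct and follows essentially the same approach as the paper: apply the Taylor expansion \eqref{eq:truncTaylor} factorwise, use \eqref{exact} to kill the linear term, identify the quadratic term with $-\thetavec\trans A\thetavec$, and bound the fifth-order remainder. The only cosmetic difference is that you bound the error via the exact identity $\sum_W\lambda_W=nd/r$ from \eqref{eq:dslacomp}, whereas the paper uses $\lambda_W=O(\lambda)$ from Lemma~\ref{lem:lambdaW-ratios} and then $\lambda\binom nr=nd/r$; these are equivalent. (One nitpick: your claim $r\rho\ll 1/(r\sqrt n)$ is a slight overstatement---from $d\gg nr^4\log n$ you only get $r\rho\ll\sqrt{\log n/n}$---but all you need is $r\rho=o(1)$, which does follow.)
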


\begin{proof}
  Recall that $\lambda_W\in(0,1)$ for all $W$, and note that~\eqref{eq:assumptions}
  implies that $r\rho=o(1)$.
 Hence, recalling  (\ref{eq:F-def}), we can apply~\eqref{eq:truncTaylor}
for each $W\in\rsets$, taking $\xi=\lambda_W$ and $x=\sum_{j\in W}\theta_j$. 
The linear term of $\log F(\thetavec)$ (which includes terms from the denominator of $F(\thetavec)$), is
	\[  i \, \sum_{j\in [n]} \theta_j\, \biggl(\, \biggl(\,\sum_{W\ni j} \lambda_W \biggr) - d_j \biggr),\]
	which equals zero by (\ref{exact}). 
	In addition, for the quadratic term,
\[
\sum_{W\in\rsets}\dfrac{1}{2}\lambda_{W}(1-\lambda_{W})\biggl(\,\sum_{j\in W} 
\theta_j\biggr)^{\!2}
=\sum_{j,k\in [n]} \sum_{W \supset \{j,k\}} \dfrac{1}{2}\lambda_{W}(1-\lambda_{W})\theta_j\theta_k=\thetavec\trans\!  A \thetavec.
\]
Now $\lambda_W = O(\lambda)$ for all $W\in\rsets$, 
by Lemma~\ref{lem:lambdaW-ratios}, so the combined error term is
\[  O\biggl( \lambda \binom nr r^5 d^{-5/2}\log^5 n\biggr) 
 \stackrel{(\ref{eq:dlacomp})}{=} O\biggl( \frac{n  r^{4}\log^5 n}{d^{3/2}}\biggr).  \qedhere \]
\end{proof}

\bigskip

Recall that for a complex variable $Z$, the \emph{variance} is defined by
\[ \Var Z = \E |Z - \E Z|^2 = \Var \Re Z + \Var \Im Z,\]
while the \emph{pseudovariance} is
\[ \V Z = \E (Z - \E Z)^2 = \Var \Re Z - \Var \Im Z + 2i\, \Cov(\Re Z, \Im Z).\]
The following is a special case of \cite[Theorem~4.4]{mother}
that is sufficient for our current purposes.

\begin{theorem}\label{gauss4pt}
	Let $A$ be an $n\times n$ positive definite symmetric real matrix
	and let $T$ be a real matrix such that $T\trans\! AT=I$.
	Let $\varOmega$ be a measurable set and let 
	$f: \Reals^n\to\Complexes$ and $h:\varOmega\to\Complexes$ 
	be measurable functions.
	Make the following assumptions for some $\rho_1,\rho_2,\phi$:
	\begin{itemize}\itemsep=0pt
		\item[\emph{(a)}] $T(U_n(\rho_1))\subseteq \varOmega\subseteq T(U_n(\rho_2)),$
		where $\rho_1,\rho_2=\Theta(\log n)$.
		
				\item[\emph{(b)}] For $\xvec\in T(U_n(\rho_2))$,
				$2\rho_2\,\norm{T}_1\,\left|\dfrac{\partial f}{\partial x_j}(\xvec)\right|
				\le \phi n^{-1/3}\le\frac23$ for $1\le j\le n$ and
				\[4\rho_2^2\,\norm{T}_1\,\norm{T}_\infty\,
				\norm{H}_\infty
				\le \phi n^{-1/3},\]
		where $H = (h_{jk})$ is the matrix with entries defined by
		\[		
		     h_{jk}=\sup_{\xvec\in T(U_n(\rho_2))}\, 
		     \left| \frac{\partial^2 f}{\partial x_j\, \partial x_k}(\xvec) \right|.
				\]
		\item[\emph{(c)}] $\abs{f(\xvec)} \le n^{O(1)} 
		e^{O(1/n)\,\xvec\trans\! A\xvec}$ 
		uniformly for $\xvec\in\Reals^n$.
	\end{itemize}
	Let $\Xvec$ be a Gaussian random vector 
	with density
	$\pi^{-n/2} \abs{A}^{1/2} \, e^{-\xvec\trans\!A\xvec}$.
	Then, provided $\V f(\Xvec)$ is finite
	and $h$ is bounded in~$\varOmega$,
	\[
	\int_\varOmega e^{-\xvec\trans\!A\xvec + f(\xvec)+h(\xvec)}\,d\xvec
	= (1+K) \pi^{n/2}\abs{A}^{-1/2} e^{\E f(\Xvec)+\frac12\V f(\Xvec)},
	\]     
	where, for sufficiently large $n$,
	\[
	    \abs{K} \le  e^{\frac12\Var\Im f(\Xvec)}\,
	    \( 3e^{\phi^3+e^{-\rho_1^2/2}}-3
	    +\sup_{\xvec\in\varOmega}|e^{h(\xvec)}-1|\).
	\]
\end{theorem}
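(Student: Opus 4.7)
The plan is to reduce Theorem~\ref{gauss4pt} to the general statement \cite[Theorem~4.4]{mother} by verifying that each hypothesis there is implied by the cleaner conditions (a)--(c) stated here; the general theorem in \cite{mother} is formulated with a list of per-coordinate radii and two auxiliary majorant functions, but under the uniform bounds offered here those parameters collapse to the single scalars $\rho_1,\rho_2,\phi$.

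First I would diagonalise by the change of variables $\xvec=T\yvec$. Since $T\trans\! A T=I$, the quadratic form becomes $\yvec\trans\yvec$ and the Jacobian produces a factor $\abs{\det T}=\abs{A}^{-1/2}$, so the integral becomes
\[
 \abs{A}^{-1/2}\int_{T^{-1}(\varOmega)} e^{-\yvec\trans\yvec+g(\yvec)+\tilde h(\yvec)}\,d\yvec,
\]
where $g(\yvec):=f(T\yvec)$ and $\tilde h(\yvec):=h(T\yvec)$. Hypothesis~(a) tells us that $T^{-1}(\varOmega)$ contains $U_n(\rho_1)$ and is contained in $U_n(\rho_2)$, and (b) translates via the chain rule into uniform bounds on the first and mixed second derivatives of $g$ of order $\phi n^{-1/3}$ on $U_n(\rho_2)$ -- precisely the scale needed to treat $g$ as nearly linear across the typical fluctuations of a standard Gaussian vector $\Yvec$ of dimension $n$.

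Next I would isolate the tail contribution. Outside $U_n(\rho_1)$ the density $\pi^{-n/2}e^{-\yvec\trans\yvec}$ is dominated by $e^{-\rho_1^2/2}$ at the boundary, and hypothesis~(c) ensures that $\abs{f}$ can be absorbed into a fraction of the Gaussian exponent so that the weight $e^{g(\yvec)}$ does not disrupt this decay. A routine concentration estimate then shows that the truncated integral agrees with the full Gaussian expectation $\pi^{n/2}\abs{A}^{-1/2}\,\E\bigl[e^{g(\Yvec)}\bigr]$ up to an error of order $e^{-\rho_1^2/2}$, which accounts for that summand in the statement.

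For the core estimate I would evaluate $\E\bigl[e^{g(\Yvec)}\bigr]$ via the complex martingale scheme of \cite{mother}: introduce a Brownian interpolation $\Yvec_t$ from $\zerovec$ to $\Yvec$ and study the martingale $M_t=\E\bigl[e^{g(\Yvec)}\mid\mathcal{F}_t\bigr]$. An It\^o expansion applied to $\log M_t$ produces the Gaussian moment exponent $\E g(\Yvec)+\tfrac12\V g(\Yvec)=\E f(\Xvec)+\tfrac12\V f(\Xvec)$ as the deterministic drift, while the bracket of the martingale part is controlled by the first-derivative bound from~(b), and the higher-order It\^o remainders are controlled by the second-derivative bound together with $\norm{T}_1\norm{T}_\infty$. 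Combining the cubic martingale error with the tail error gives the composite $3e^{\phi^3+e^{-\rho_1^2/2}}-3$ summand; the factor $3$ is a standard triangle-inequality overhead from splitting the martingale into its real and imaginary parts and then re-exponentiating. The outer prefactor $e^{\frac12\Var\Im f(\Xvec)}$ appears because the martingale control is measured in absolute value, so the imaginary component contributes its ordinary variance rather than its signed pseudovariance. Finally, the bounded multiplier $e^{h(\xvec)}$ is factored out as $\sup_{\xvec\in\varOmega}\abs{e^{h(\xvec)}-1}$, producing the remaining summand of $\abs{K}$.

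The main obstacle is the martingale step: one has to check that the specific calibration $\phi n^{-1/3}$, together with the prefactors $2\rho_2\norm{T}_1$ and $4\rho_2^2\norm{T}_1\norm{T}_\infty$ in~(b), really does force every It\^o remainder beyond quadratic order to be $O(\phi^3)$ uniformly along the bridge inside $U_n(\rho_1)$. Once this calibration is verified, the remaining manipulations -- change of variables, tail truncation, and recombining the drift, the quadratic variation and the bounded perturbation $h$ -- are mechanical, and the statement follows.
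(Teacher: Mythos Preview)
Your proposal is correct in spirit and matches the paper's approach: the paper does not prove this theorem at all but simply introduces it as ``a special case of \cite[Theorem~4.4]{mother} that is sufficient for our current purposes,'' with no verification of hypotheses and no proof sketch. Your first paragraph already captures exactly this reduction; the remaining paragraphs (change of variables, tail truncation, the complex-martingale/It\^o analysis) are an informal outline of how \cite[Theorem~4.4]{mother} itself is proved, which goes well beyond anything the paper provides and is not needed here.
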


\bigskip
Now we will prove Lemma~\ref{lem:inbox}.

\begin{proof}[Proof of Lemma~\ref{lem:inbox}]
Let $\rho=d^{-1/2}\log n$.
Applying Lemma~\ref{lem:truncate} gives
	\[
	\int_{\calR(\rho)} F(\thetavec) \, d\thetavec
	= \int_{\calR(\rho)} \exp\(- \thetavec\trans\! A \thetavec + f(\thetavec) + h(\thetavec)\)\,d\thetavec,
	\]
	where 
	\begin{align}
	f(\thetavec)&=\sum_{W\in\rsets} \sum_{p=3}^4 i^p c_p(\lambda_W)\, \Bigl(\, \sum_{j\in W} \theta_j\Bigr)^{p}, \nonumber\\ 
	h(\thetavec) &= O(nr^4d^{-3/2}\log^5 n) \stackrel{\eqref{eq:assumptions}}{=} O(nr^2/d).\label{eq:h-theta}
	\end{align}
	
	We will apply Theorem~\ref{gauss4pt}  with $\Omega = \calR(\rho)$. 
Let $T,\rho_1,\rho_2$ be as in Lemma~\ref{lem:diaggeneral-r}. 
Then $T(U_n(\rho_1))\subseteq \mathcal{R}(\rho)\subseteq T(U_n(\rho_2))$.
	Observe that $\rho_1,\rho_2=\Theta(\rho d^{1/2})=\Theta(\log{n})$, by \eqref{eq:dlacomp}.
	Clearly $\rho_1\le \rho_2$ and thus condition (a) in Theorem~\ref{gauss4pt} is satisfied. 
	
Now for $j\in [n]$,
	\begin{align*}
		\frac{\partial f}{\partial \theta_j}(\thetavec)&=\dfrac{1}{6}\sum_{W\ni j} \lambda_{W}(1-\lambda_W)(1-6\lambda_{W}+6\lambda_W^2)\biggl(\,\sum_{\ell\in W}\theta_\ell\biggr)^{\!3}\\
		&\hspace*{3cm} {} - \dfrac{i}{2}\sum_{W\ni j} \lambda_{W}(1-\lambda_W)(1-2\lambda_{W})\biggl(\,\sum_{\ell\in W}\theta_\ell\biggr)^{\!2}.
	\end{align*}
	
Thus, for all $\thetavec\in T(U_n(\rho_2))$ and all $j\in [n]$ we have
	\begin{equation}\label{eq:partialderivg}
		\left|\frac{\partial f}{\partial \theta_j}(\thetavec)\right|
=O\left(\varLambda \binom{n-1}{r-1} \,r^2\, \norm{\thetavec}_{\infty}^2\right)
=O\left(\varLambda \binom{n-1}{r-1} \,r^2\, \rho^2\right),
	\end{equation}
by Lemmas~\ref{lem:lambdaW-ratios} and \ref{lem:dlacomp}
and using the fact that $r\rho = o(1)$. 
Hence, by \eqref{eq:partialderivg} and Lemma~\ref{lem:diaggeneral-r},
	\begin{equation}\label{eq:phi1bound1}
	2\rho_2\, \norm{T}_1\, \left|\frac{\partial f}{\partial \theta_j}(\thetavec)\right|\stackrel{}{=}
	O\biggl(\log{n}\cdot \varLambda^{-1/2}\binom{n-1}{r-1}^{\!-1/2}
	 \!\!\varLambda \binom{n-1}{r-1} r^2\, \rho^2\biggr)
	    \stackrel{\eqref{eq:dlacomp}}{=}O\left(\frac{r^2 \log^3{n} }{d^{1/2}}\right)
	\end{equation}
for every $\thetavec\in T(U_n(\rho_2))$ and $j\in [n]$. 
	Also for all $j,k\in [n]$ (including $j=k$),
	\begin{align*}
		\frac{\partial^2 f}{\partial \theta_j\, \partial \theta_k}(\thetavec)
		 &=\dfrac{1}{2}\sum_{W\supset \{j,k\}} \lambda_{W}(1-\lambda_W)(1-6\lambda_{W}+6\lambda_W^2)\biggl(\,\sum_{\ell \in W}\theta_\ell \biggr)^{\!2}\\
		&{\qquad} - i\sum_{W\supset \{j,k\}} \lambda_{W}(1-\lambda_W)(1-2\lambda_{W})\biggl(\,\sum_{\ell\in W}\theta_\ell\biggr).
	\end{align*}
Arguing as above, if $\thetavec\in T(U_n(\rho_2))$ then 
	\begin{equation}\label{eq:partialderiv2g}
		\left|\frac{\partial^2 f}{\partial \theta_j\, \partial \theta_k}(\thetavec)\right|= 
		\begin{cases}
			\,\displaystyle O\left(\varLambda \binom{n-1}{r-1} \, r\, \norm{\thetavec}_{\infty}\right),  & \mbox{if } j=k; \\[3ex]
			\,\displaystyle O\left(\varLambda \binom{n-2}{r-2} \, r\, \norm{\thetavec}_{\infty}\right), & \mbox{otherwise.}
		\end{cases}
	\end{equation}
	Then \eqref{eq:partialderiv2g} and Lemma~\ref{lem:diaggeneral-r} imply that
	\begin{align}\label{eq:phi1bound2}
	4\rho_2^2\, \norm{T}_1\, &\norm{T}_\infty \, \norm{H}_{\infty} \notag \\
	&=O\biggl(\log^2 n\frac{1}{\varLambda \binom{n-1}{r-1}}\varLambda\left(\binom{n-1}{r-1}+ (n-1)\binom{n-2}{r-2}\right)r \rho\biggr)=O\left(\frac{r^2 \log^3 n}{d^{1/2}}\right).
	\end{align}
	By \eqref{eq:phi1bound1} and \eqref{eq:phi1bound2} there exists
	\begin{equation}
\label{eq:phi1}
\phi=O\left(\frac{r^2\, n^{1/3} \log^3{n} }{d^{1/2}}\right)
\end{equation}
such that the left side of both \eqref{eq:phi1bound1} and \eqref{eq:phi1bound2} are at most $\phi n^{-1/3}$.
	
Recall that the 2-norm
of the real symmetric matrix $A^{-1}$ equals the largest eigenvalue of~$A^{-1}$.
Using this we obtain
\begin{align*}
f(\thetavec) &= O\((r\norm{\thetavec}_\infty + r^2\norm{\thetavec}_\infty^2)\, \thetavec\trans\! A\thetavec\)
 = O\((1 + r^2\norm{\thetavec}_2^2)\, \thetavec\trans\! A\thetavec\) \\
 &= O\(\thetavec\trans\! A\thetavec +
   n^2(\thetavec\trans\! A\thetavec)^2 \norm{A^{-1}}_2\) \\
 &\stackrel{\eqref{sigmabound}}{=}
   O\biggl(\thetavec\trans\! A\thetavec +
      \frac{n^2(\thetavec\trans\! A\thetavec)^2} 
      {\varLambda\binom{n-1}{r-1}}\biggr)
 \stackrel{\eqref{mainineq}}{=}
 O\(\thetavec\trans\! A\thetavec +
       n(\thetavec\trans\! A\thetavec)^2\) \\
  & = O\( n^3 e^{\thetavec\trans\! A\thetavec/n}\),
\end{align*}
so condition (c) is satisfied. 

	By Theorem~\ref{gauss4pt} we have
	\[\int_{U_n(\rho)} F(\thetavec) \, d\thetavec= (1+K)\frac{\pi^{n/2}}{|A|^{1/2}}\,\,
	    \exp\left(\E f(\Xvec)+\dfrac{1}{2}\V f(\Xvec)\right), 
	\]
	where 
	\[ K \le e^{\Var(\Im f(\Xvec))/2}\, \Bigl( O\(\dfrac{nr^2}{d}\)+3e^{\phi^3+e^{-\rho_1^2/2}}-3\Bigr)
	=O\Bigl(\dfrac{nr^2}{d}+\phi^3+e^{-\rho_1^2/2}\Bigr)\, e^{\Var(\Im f(\Xvec))/2}.
	\]
In the last step we use the fact that $\phi=o(1)$ and $e^{-\rho_1^2/2}=o(1)$.
  The $nr^2/d$ term inside the $O(\cdot)$ is the bound on $h$ from (\ref{eq:h-theta}).
	To complete an estimate of $K$, it remains to bound 
	\[
	\Var\(\Im f(\Xvec)\)=\Var\biggl(\dfrac{1}{6}\sum_{W\in\rsets} \lambda_W(1-\lambda_W)(1-2\lambda_W)\, \Bigl(\, \sum_{j\in W} X_j\Bigr)^{\!3}\,\biggr).
	\]
	We will rely heavily on Isserlis' theorem (also called Wick's formula)
	in order to establish bounds for the variance of $\Im f(\Xvec)$ and later for the pseudovariance of $f(\Xvec)$.
	Isserlis' theorem states that the expected value
	of a product of jointly Gaussian random variables, each with zero mean, can be
	obtained by summing over all partitions of the variables into pairs, where the
	term corresponding to a partition is just the product of the covariances of each pair.
	See for example~\cite[Theorem~1.1]{MNBO}.

	In particular, for a normally distributed random vector $(Y_1,Y_2)$
	  with expected value $(0,0)$, we have
	\begin{align*}
		\E (Y_1^3)&=0, \hspace*{2cm} \E (Y_1^4)=3\Cov(Y_1,Y_1),\\
		\E(Y_1^3\, Y_2^3)&=9 \Cov(Y_1,Y_1)\, \Cov(Y_2,Y_2)\, \Cov(Y_1,Y_2)+6\Cov(Y_1,Y_2)^3,\\
		 \E (Y_1^4\, Y_2^4)&=9\Cov(Y_1,Y_1)^2 \Cov(Y_2,Y_2)^2+72\Cov(Y_1,Y_1)\Cov(Y_2,Y_2)\Cov(Y_1,Y_2)^2 \\ & \qquad {} +24\Cov(Y_1,Y_2)^4.
	\end{align*}
	Since the sum of components of a normally distributed random vector is also normally distributed,
	 we can apply Isserlis' theorem to sums involving the random variables $X_j$, $j\in [n]$.
	Then for any $W\in \rsets$ we have

	\begin{equation}\label{eq:cubzero}
		\E \biggl[\Bigl(\, \sum_{j\in W} X_j\Bigr)^{\!3}\,\biggr]=0,
	\end{equation}
        and so
	\[
	\Var\(\Im f(\Xvec)\)=\sum_{W\in\rsets}\,\sum_{W'\in\rsets}
	   \!O(\varLambda^2)\; \E \biggl[\Bigl(\, \sum_{j\in W} X_j\Bigr)^{3}\Bigl(\, \sum_{k\in W'} X_k\Bigr)^{3}\biggr].
	\]
	For $W,W'\in\rsets$ let 
	\[
	\sigma(W,W'):=\Cov\biggl[\,\sum_{j\in W} X_j,\sum_{k\in W'} X_k\biggr].
	\]
	Now $\Cov[X_j,X_k]$ equals the corresponding values of $(2A)^{-1}$ and hence,
	by Lemma~\ref{lem:diaggeneral-r} and (\ref{eq:dlacomp}),
	\[
	\Cov\left[X_j,X_k\right]=
	\begin{cases}
		O\(\dfrac{1}{d}\),  & \mbox{if } j=k; \\[1ex]
		O\(\dfrac{1}{nd} \), & \mbox{otherwise.}
	\end{cases}
	\]
	Since covariance is additive, we have
	\begin{equation}\label{eq:covg}
           \sigma(W,W') = O\biggl(\frac{r^2}{nd} + \frac{\card{W\cap W'}}{d}\biggr). 
	\end{equation}
Using this together with Isserlis' theorem, for any pair $W,W'$,
	\begin{align*}
	\E \biggl[\Bigl(\, \sum_{j\in W} X_j\Bigr)^{3}\Bigl(\, \sum_{k\in W'} X_k\Bigr)^{3}\,\biggr] 
	 &= 9\, \sigma(W,W)\, \sigma(W',W')\, \sigma(W,W') +6 \, \sigma(W,W')^3 \\[-1ex]
         &= O\left(\frac{r^2}{d^2}\, \sigma(W,W')\right)\\
	 &= O\biggl(\frac{r^4}{nd^3} + \frac{r^2\,\card{W\cap W'}}{d^3}\biggr).
	\end{align*}
	The average value of $\card{W\cap W'}$ over pairs of $r$-sets is $r^2/n$,
	so we can sum over $W,W'\in\rsets$ to obtain
	\[
	    \Var(\Im f(\Xvec)) = O\biggl(\varLambda^2\, \binom{n}{r}^{\!2} 
	         \biggl(\frac{r^4}{nd^3} + \frac{r^2\,(r^2/n)}{d^3}\biggr)\biggr)
	         \stackrel{(\ref{eq:dlacomp})}{=} O\biggl( \frac{nr^2}{d} \biggr).
	\]
	By \eqref{eq:assumptions} this term tends to $0$, implying that 
	$K=O(nr^2/d + \phi^3+e^{-\rho_1^2})$.

	All that is left is to establish bounds on $\E f(\Xvec)$ and $\V f(\Xvec)$.
	Due to~\eqref{eq:cubzero}, we have
	\begin{align*}
	\E f(\Xvec)&=\dfrac{1}{24}\sum_{W\in\rsets}\lambda_W(1-\lambda_W)(1-6\lambda_W + 6\lambda_W^2)\,
	\E \biggl[\Bigl(\, \sum_{j\in W} X_j\Bigr)^{\!4}\,\biggr]\\
  &=O\biggl(\varLambda \sum_{W\in\rsets} \E \biggl[\Bigl(\, \sum_{j\in W} X_j\Bigr)^{\!4}\,\biggr]\biggr).
	\end{align*}
	Again using Isserlis' theorem, for any $W\in \rsets$ we have
	\[
	\E \biggl[\Bigl(\, \sum_{j\in W} X_j\Bigr)^{\!4}\,\biggr]=3\sigma(W,W)^2\stackrel{\eqref{eq:covg}}{=}O\left(\frac{r^2}{d^2}\right).
	\]
	Hence by (\ref{eq:dlacomp}),
	\[
	\E f(\Xvec)=O\biggl(\frac{nr^2}{d}\biggr).
	\]
	Now $\V f(\Xvec)$ satisfies
	\[
	  \abs{\V f(\Xvec)} = \abs{\E\, (f(\Xvec)-\E f(\Xvec))^2} \le \E\,\abs{f(\Xvec)-\E f(\Xvec)}^2
	=\Var(\Re f(\Xvec))+\Var(\Im f(\Xvec)).
	\]
	Since we already established a bound on $\Var(\Im f(\Xvec))$, we only need to consider $\Var(\Re f(\Xvec))$. Note that 
	\begin{align*} 
	\Var(\Re f(\Xvec))
 &\leq 
		\sum_{W\in\rsets}\,\sum_{W'\in\rsets}c_4(\lambda_W)c_4(\lambda_{W'})\,\,
\E \biggl[\Bigl(\, \sum_{j\in W} X_j\Bigr)^{\!4}\Bigl(\, \sum_{k\in W'} X_k\Bigr)^{\!4}\,\biggr]. 
\end{align*}
By Isserlis' theorem, we have
	\begin{align*}
&	\E \biggl[\Bigl(\, \sum_{j\in W} X_j\Bigr)^{\!4}\Bigl(\, \sum_{k\in W'} X_k\Bigr)^{\!4}\,\biggr] \\
	&\hspace*{2cm} {} = 9\sigma(W,W)^2 \sigma(W',W')^2+72\sigma(W,W)\sigma(W',W')\sigma(W,W')^2
		+24\sigma(W,W')^4.
	\end{align*}
     Since $\sigma(W,W')=O(r/d)$ from~\eqref{eq:covg}, 
     \[
          \Var(\Re(f(\Xvec))) = O\biggl( \varLambda^2 \binom{n}{r}^2\, \frac{r^4}{d^4}\biggr) 
 \stackrel{(\ref{eq:dlacomp})}{=}
O\biggl( \frac {n^2 r^2}{d^2} \biggr) \stackrel{(\ref{eq:assumptions})}{=} 
O\biggl( \frac {n r^2}{d} \biggr).
    \]
	Therefore $|\V(f(\Xvec))| = O(nr^2/d)$ and hence
	\begin{align*}
	\int_{\calR(\rho)} F(\thetavec) \, d\thetavec 
	&= \frac{\pi^{n/2}}{|A|^{1/2}}\, \exp\Bigl(\E(f(\Xvec))+ \dfrac{1}{2}\V{f(\Xvec)}+O\Bigl(\dfrac{nr^2}{d}+\phi^3+e^{-\rho_1^2}\Bigr) \Bigr)\\
	&=\left(1+O\left(\frac{nr^2}{d}+\frac{r^6 n \log^9{n}}{d^{3/2}}+n^{-{\Omega(\log{n})}}\right) \right)\frac{\pi^{n/2}}{|A|^{1/2}},
	\end{align*}
using (\ref{eq:phi1}) and the definition of $\rho_1$.
\end{proof}

\subsection{Proof of Lemma~\ref{L:step1}}

In this section we evaluate the integral over the region $U_n(r^{-1})\setminus \calR(\rho)$.
The following technical bound will be useful: 
 for any $t \in \Reals$ and $\lambda \in [0,1]$, we have
 \begin{equation}\label{factor-bound}
 	|1+  \lambda (e^{it}-1)| \leq \exp \left(-\dfrac12\left(1-  \dfrac{t^2}{12}\right) \lambda(1-\lambda) t^2 \right).
 \end{equation}

 \begin{proof}[Proof of Lemma~\ref{L:step1}]
We will show that for  any $\hat{\rho}$ satisfying
$(2r)^{-1} \geq \hat{\rho} \geq d^{-1/2}\log{n}$, we have
\begin{equation}\label{eq:unionassumption}
\int_{U_n(2 \hat{\rho}) \setminus \calR(\hat{\rho})} |F(\thetavec)| \, d \thetavec
= n^{-\Omega(\log n)}\frac{\pi^{n/2}  }{|A|^{1/2}}.
\end{equation}
Observe that 
\[  U_n((2r)^{-1})\setminus \calR(d^{-1/2}\log n) \,\subseteq\, \bigcup_{\ell=0}^{L-1} \,
\(U_n(2^{\ell+1}d^{-1/2}\log n)\setminus\calR(2^\ell d^{-1/2}\log n)\) \]
for $L = \lceil\log_2 \( (2r)^{-1}/(d^{-1/2}\log n)\) \rceil  = O(r \log n)$,
and that 
\[ U_n(r^{-1}) \setminus \calR(d^{-1/2}\log n) \,\subseteq\, 
  \left( U_n(r^{-1}) \setminus \calR((2r)^{-1})\right) \cup
  \left( U_n((2r)^{-1})\setminus \calR(d^{-1/2}\log n) \right).\]
This expresses the region of integration in the lemma statement as a union of integrals of the form given in
(\ref{eq:unionassumption}), and the result follows.

It remains to prove \eqref{eq:unionassumption}.
 Using \eqref{factor-bound},  for  any such 
 $\hat{\rho}$ 
 \[
        \int_{U_n(2 \hat{\rho}) \setminus \calR(\hat{\rho})} |F(\thetavec)| \, d \thetavec
        \leq \int_{\Reals^n \setminus \calR(\hat{\rho})}   e^{- (1 - r^2 \hat{\rho}^2/3)\, \thetavec\trans\!  A \thetavec} \, d \thetavec.
\]
Let $T$ be as in Lemma~\ref{lem:diaggeneral-r} and note that $|T|=|A|^{-1/2}$.
Then by Lemma~\ref{lem:diaggeneral-r} and \eqref{eq:dlacomp} there exists a 
$\hat{\rho}_1=\Theta(\hat{\rho} d^{1/2})$ such that $T(U_n(\hat{\rho}_1))\subseteq \calR(\hat{\rho})$.
Taking $\hat{\rho}_1' = (1-r^2 \hat{\rho}^2/3)^{1/2}\, \hat{\rho}_1$ we find that
$(1-r^2 \hat{\rho}^2/3)^{-1/2}\, U_n(\hat{\rho}_1') = U_n(\hat{\rho}_1)$ and hence
\[ (1-r^2 \hat{\rho}^2/3)^{-1/2}\, T(U_n(\hat{\rho}_1')) = T(U_n(\hat{\rho}_1))\subseteq \calR(\hat{\rho}).\]
Therefore, substituting $\thetavec=(1-r^2 \hat{\rho}^2/3)^{-1/2}\, T\xvec$ gives
\begin{equation*}
   \int_{\Reals^n \setminus \calR(\hat{\rho})}   e^{- \left(1 - r^2  \hat{\rho}^2/3  \right)
    \thetavec\trans\!  A \thetavec} \, d \thetavec \leq  \frac{(1 - r^2 \hat{\rho}^2/3 )^{-n/2}}{|A|^{1/2}} \,
     \int_{\Reals^n \setminus  U_n(\hat{\rho}_1') } e^{-\xvec\trans \xvec}\,  d \xvec.
\end{equation*}     
Note that $(1 - r^2 \hat{\rho}^2/3 )^{-n/2}= \exp(O(r^2 \hat{\rho}^2 n))$. In addition we have $\hat{\rho}_1'=\Theta(\hat{\rho}_1)=\Theta(\hat{\rho}d^{1/2})$ and thus 
\[
\int_{\Reals^n \setminus  U_n(\hat{\rho}_1') } e^{-\xvec\trans \xvec}\,  d \xvec \le n \exp(-\Omega(\hat{\rho}_1^2))=n \exp(-\Omega(\hat{\rho}^2 d)).
\]
We deduce that
\begin{equation*}     
  \int_{\Reals^n \setminus \calR(\hat{\rho})}   e^{- (1 - r^2  \hat{\rho}^2/3  )
   \thetavec\trans\!  A \thetavec} \, d \thetavec   \leq n\, \exp
  \(O(r^2\hat{\rho}^2n)-  \Omega(\hat{\rho}^2 d)\)\, \frac{1}{|A|^{1/2}}
        =  n^{-\Omega(\log n)}\frac{\pi^{n/2}  }{|A|^{1/2}},
\end{equation*}
    as $d\gg r^2 n$, by (\ref{eq:assumptions}), and $\hat{\rho}^2 d=\Omega(\log^2 {n})$.
  \end{proof}
  
\subsection{Proof of Lemma~\ref{lem:reduce-to-J0}}

In this section we complete the evaluation of the integral by examining the values in the region $U_n(\pi)\setminus \mathcal{B}$.
For $x\in\Reals$, define
$\abs{x}_{2\pi} = \min_{k\in\Integers}\, \abs{x-2k\pi}$ and
note that $\abs{1+\lambda(e^{ix}-1)}$ depends only on $\abs{x}_{2\pi}$.

 \begin{proof}[Proof of Lemma~\ref{lem:reduce-to-J0}]
Let $\thetavec\in U_n(\pi)\setminus \mathcal{B}$.
 	First suppose that $|\theta_a- \theta_b|_{2\pi}> (2r)^{-1}$ for some  $a,b\in [n]$.
 	For any $W_1, W_2 \in \rsets$ that $W_1 \mathbin{\triangle} W_2 =\{a,b\}$, we have 
 	\[
 		\left|\,\sum_{j\in W_1} \theta_j - \sum_{j\in W_2} \theta_j\right|_{2\pi}\!\!>(2r)^{-1}.
 	\]
 	So   $\Abs{\sum_{j\in W_1} \theta_j}_{2\pi} >(4r)^{-1}$  or
	 $\Abs{\sum_{j\in W_2} \theta_j}_{2\pi}>(4r)^{-1}$, or both. In any case, by Lemma~\ref{lem:lambdaW-ratios} and \eqref{factor-bound} we have
 	\begin{equation}\label{eq:W12}
 		\Abs{1 +  \lambda_{W_1}(e^{i\sum_{j\in W_1} \theta_j}-1)}\cdot 
 		\Abs{1 +  \lambda_{W_2}(e^{i\sum_{j\in W_2} \theta_j}-1)}
 		\leq e^{-\Omega(\varLambda/r^2)}.
 	\end{equation}
 	Note that there are exactly $\binom{n-2}{r-1}=\Theta\( \binom{n-1}{r-1}\)$ pairs
	$W_1,W_2$ such that $W_1 \mathbin{\triangle} W_2 =\{a,b\}$.
Furthermore, every $W\in\rsets$ is contained in at most one such pair.
Then, multiplying inequalities \eqref{eq:W12} for all such pairs, 
we obtain 
\[  \abs{F(\thetavec)} = \exp \biggl(-\Omega\biggl( \varLambda \binom{n-1}{r-1}/r^2\biggr)\biggr)
    \stackrel{\eqref{eq:dlacomp}}{=} e^{-\Omega(d/r^2)}.
\]
By~\eqref{eq:assumptions}, $\dfrac {d}{r^2}\gg nr^2\log n$, while by Lemma~\ref{lem:detA}
and because $d<n^r$, we have
$\abs{A} = e^{O(n\log d)} = e^{O(nr\log n)}$.
Therefore the total contribution to the integral from this case is at most
\[
      (2\pi)^n e^{-\Omega(d/r^2)} = e^{-\omega(nr^2\log n)} = 
        n^{-\omega(n)}\, \frac{\pi^{n/2}  }{|A|^{1/2}}.
 \]
 	
 	All remaining points $\thetavec\in U_n(\pi)\setminus \mathcal{B}$ satisfy
 	$|\theta_a -\theta_b|_{2\pi} \leq (2r)^{-1}$ for all $a,b\in [n]$ 
and $\min_{j \in [n],\, k\in [r]}|\theta_j - \frac{2\pi k}{r}|_{2\pi} > (2r)^{-1}$.
These two conditions imply that for any such $\thetavec$ there exists $k\in [r]$
such that for all $j\in [n]$ we have
\[ \frac{2\pi k}{r} + \frac{1}{2r} < \theta_j < \frac{2\pi(k+1)}{r} - \frac{1}{2r}.\]
Summing the above over any $W\in\rsets$ implies that
 	$\dfrac{1}{2} \leq \left|\sum_{j\in W} \theta_j\right|_{2\pi} \leq \pi$. 
Hence (\ref{factor-bound}) implies that
\[ |F(\thetavec)| = \exp\left( - \Omega(\varLambda) \, \binom{n}{r}\right).\]
Again, multiplying by $(2\pi)^n$ for an upper bound, we see that the contribution
of all such points $\thetavec$ to the integral is at most
 	\begin{align*}
 		 (2\pi)^n\, \exp\left( - \Omega(\varLambda) \, \binom{n}{r}\right)
   = \exp \left(-\Omega\left( \varLambda \binom{n-1}{r-1}\right)\right)
= n^{-\omega(n)}\, \frac{\pi^{n/2}  }{|A|^{1/2}},
 	\end{align*}
completing the proof.
 	\end{proof}	

\section{Solving the beta-system}\label{s:proof-degree-sufficient}

We first prove that the solution to (\ref{exact}) is unique if it exists.
\begin{proof}[Proof of Lemma~\ref{lem:unique}]
Suppose $\betavec'\ne \betavec''$ both satisfy~\eqref{exact}.
For $y\in\Reals$ and $W\in\rsets$ define 
$\xi_W(y):=(1-y)\lambda_W(\betavec') + y\lambda_W(\betavec'')$.
Consider the entropy function
\[   
   S(y):= \sum_{W\in\rsets} \Bigr(\xi_W(y)\log\frac 1{\xi_W(y)}
                                  + (1-\xi_W(y))\log\frac 1{1-\xi_W(y)}\Bigr).
\]
The derivative of $S(y)$ at $y=0$ is
\begin{align*}
   S'(0) &= \sum_{W\in\rsets}  \(\lambda_W(\betavec')-\lambda_W(\betavec'')\)
      \log\frac{\lambda_W(\betavec')}{1-\lambda_W(\betavec')} \\
  &{} \stackrel{\eqref{eq:lambdaW-def}}{=}
   \sum_{W\in\rsets} \(\lambda_W(\betavec')-\lambda_W(\betavec'')\)
      \sum_{j\in W} \beta'_j \\
  &{}= \sum_{j=1}^n \beta'_j\,\sum_{W\ni j} \(\lambda_W(\betavec')-\lambda_W(\betavec'')\)
  \stackrel{\eqref{exact}}{=} 0.
\end{align*}
Similarly, the derivative of $S(y)$ at $y=1$ is  $S'(1)=0$.

On the other hand, $\betavec'\ne\betavec''$ implies that
   $\lambda_W(\betavec')\ne\lambda_W(\betavec'')$ for at least one
$W\in\rsets$.
The second derivative of $S(y)$ equals
\[ - \sum_{W\in\rsets} \bigl(\lambda_W(\betavec'')-\lambda_W(\betavec')\bigr)^2\,
  \xi_W(y)^{-1}\, (1- \xi_W(y))^{-1},\]
and hence is strictly negative when $\betavec'\neq \betavec''$.
Therefore $S(y)$ is strictly concave
and cannot have more than one stationary point. This completes the proof.
\end{proof}

To prove  Lemma~\ref{lem:degree-sufficient} we will  employ the following lemma from \cite{sandwichGIM}.
\begin{lemma}\label{l:Kowa}\emph{\cite[Lemma 7.8]{sandwichGIM}}\\
	Let $\Psi : \Reals^n \to \Reals^n$, $\eta>0$,
	and   $U  =  \{\betavec\in \Reals^n \st \|\betavec - \betavec^{(0)} \| \leq \eta \norm{\Psi(\betavec^{(0)})} \}$
	and $\betavec^{(0)}\in \Reals^n$,  where 
	$\|\cdot\|$ is  any vector norm in $\Reals^n$.
	Assume that
	\[
	\text{$\Psi$ is analytic in $U$} \qquad \text{and} \qquad \sup_{\xvec \in U} \|J^{-1}(\betavec)\| < \eta, 
	\]
	where $J$ denotes the Jacobian matrix   of $\Psi$ and  $\|\cdot\|$ stands for the induced matrix norm.
	Then there exists $\betavec^*\in U$ such that 
	$\Psi(\betavec^*) = \zerovec$. 
\end{lemma}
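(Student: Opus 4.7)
The plan is to establish existence of a zero of $\Psi$ inside $U$ by following a \emph{continuous Newton flow} whose image under $\Psi$ traces out a straight line contracting to the origin. Let $\tilde\eta:=\sup_{\betavec\in U}\|J(\betavec)^{-1}\|$, so the hypothesis reads $\tilde\eta<\eta$, and set $\psi_0:=\Psi(\betavec^{(0)})$. I would consider the initial value problem
\[
\frac{d\betavec}{dt}=-J(\betavec(t))^{-1}\psi_0,\qquad \betavec(0)=\betavec^{(0)}.
\]
Because $\Psi$ is analytic on $U$ and $J$ is invertible there, the right-hand side is analytic (hence locally Lipschitz) in $\betavec$, so Picard--Lindel\"of yields a unique solution on some maximal interval along which $\betavec(t)\in U$.

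The key algebraic observation is that along the trajectory,
\[
\frac{d}{dt}\Psi(\betavec(t)) \;=\; J(\betavec(t))\,\frac{d\betavec}{dt} \;=\; -\psi_0,
\]
so integration gives $\Psi(\betavec(t)) = (1-t)\psi_0$ whenever the solution is defined. Hence if the trajectory can be extended up to $t=1$ while remaining inside $U$, then $\betavec^\ast := \betavec(1)$ is the desired zero of $\Psi$. To control the trajectory, I would bound
\[
\|\betavec(t)-\betavec^{(0)}\|\;\le\;\int_0^t\|J(\betavec(s))^{-1}\psi_0\|\,ds\;\le\;t\,\tilde\eta\,\|\psi_0\|
\]
for any $t$ such that $\betavec(s)\in U$ for all $s\in[0,t]$.

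For $t\in[0,1]$ this yields $\|\betavec(t)-\betavec^{(0)}\|\le \tilde\eta\|\psi_0\|$, which is strictly less than $\eta\|\psi_0\|$ by hypothesis. A standard continuation argument then concludes: letting $T\le 1$ be the supremum of times at which the solution exists in $U$, the previous estimate forces $\betavec(T)$ into the strict interior of $U$, so if $T<1$ the ODE extends past $T$, contradicting maximality of $T$. Therefore $T=1$, giving $\betavec^\ast:=\betavec(1)\in U$ with $\Psi(\betavec^\ast)=\zerovec$. The main subtlety lies precisely in this use of the strict inequality $\tilde\eta<\eta$: without a positive gap, the trajectory could creep toward the boundary of $U$ as $t\to 1^-$ and ``escape'' before reaching a root, whereas the strict gap provides the slack needed to keep the endpoint inside the closed set $U$.
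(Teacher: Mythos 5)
The paper itself offers no proof of this lemma --- it is imported verbatim from \cite[Lemma~7.8]{sandwichGIM} --- so there is no in-paper argument to compare against; judged on its own, your continuous-Newton (path-following) proof is correct and is the standard argument for this kind of quantitative inverse-function statement, in the same spirit as the cited source. The computation $\frac{d}{dt}\Psi(\betavec(t))=-\psi_0$, hence $\Psi(\betavec(t))=(1-t)\psi_0$, together with the speed bound $\|\dot\betavec(t)\|\le\tilde\eta\,\|\psi_0\|$ and $\tilde\eta<\eta$, does confine the trajectory to the closed ball of radius $\tilde\eta\|\psi_0\|$, which is compactly contained in the interior of $U$, and the induced-norm hypothesis is exactly what makes the estimate $\|J^{-1}\psi_0\|\le\|J^{-1}\|\,\|\psi_0\|$ legitimate for an arbitrary vector norm. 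Two small points would make the write-up airtight. First, dispose of the degenerate case $\Psi(\betavec^{(0)})=\zerovec$ separately (then $U=\{\betavec^{(0)}\}$ and there is nothing to prove); otherwise the phrase ``strict interior of $U$'' is vacuous when $\|\psi_0\|=0$. Second, at the maximal time $T$ you implicitly use that $\lim_{t\to T^-}\betavec(t)$ exists: this follows because the speed bound makes $t\mapsto\betavec(t)$ Lipschitz, hence Cauchy as $t\to T^-$. With that observation the endpoint $t=1$ needs no extension of the ODE at all --- the limit $\betavec^\ast$ lies in $U$ and $\Psi(\betavec^\ast)=\lim_{t\to1^-}(1-t)\psi_0=\zerovec$ by continuity of $\Psi$ --- while for $T<1$ the limit point lies in the interior, where $J^{-1}\Psi(\betavec^{(0)})$ is still locally Lipschitz (analyticity of $\Psi$ and invertibility of $J$ hold there), so Picard--Lindel\"of extends the solution and contradicts maximality, exactly as you argue.
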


In connection with the system of  \eqref{exact}, we consider
$\Psi:\Reals^n\to \Reals^n$ defined by
\begin{equation}
\label{eq:Psi-def}
\Psi_j(\betavec)= \sum_{W \ni j} \lambda_W(\betavec)-d_j.
\end{equation}
Clearly, $\Psi$ is analytic in $\Reals^n$. Observe that 
\[ \frac{d}{dx} \left(\frac{e^{x+X}}{1+e^{x+X}}\right)=\frac{e^{x+X}}{1+e^{x+X}}\left(1-\frac{e^{x+X}}{1+e^{x+X}}\right)\]
and thus $ J(\betavec) = 2A(\betavec) $, where $J(\betavec)$ is the Jacobian matrix of $\Psi(\betavec)$ and $A(\betavec)$ is defined by \eqref{eq:A-def}.
We start by bounding  $ \|J^{-1}(\betavec)\|_\infty $ as required  for Lemma~\ref{l:Kowa}. 

\begin{lemma}\label{lem:invjacobian}
Let $\betavec^{(0)}\in \Reals^n$ and real numbers $\delta_1,\delta_2\ge 0$ satisfy $\max_{j,k\in[n]}|{\beta}^{(0)}_j-{\beta}^{(0)}_k|\le \delta_1/r$ and $e^{\delta_2}\lambda(\betavec^{(0)})\le 7/8$. 
Suppose that $n\ge 16e^{4\delta_1+8\delta_2}$. 
Then for any $\betavec\in \Reals^n$ such that $\norm{\betavec-\betavec^{(0)}}_{\infty}\le \delta_2/r$, we have 
\[\norm{J^{-1}(\betavec)}_{\infty} = \norm{(2A(\betavec))^{-1}}_{\infty}\le 2^8 C \frac{e^{36\delta_1+73\delta_2}}{\binom{n-1}{r-1}\lambda(\betavec^{(0)})},\]
where $C$ is the constant from Lemma~\emph{\ref{lem:diaggeneral-r}}.
\end{lemma}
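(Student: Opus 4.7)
The plan is to estimate $\norm{A(\betavec)^{-1}}_\infty$ by summing the entrywise bounds from Lemma~\ref{lem:diaggeneral-r}, and then to convert the resulting $\varLambda(\betavec)^{-1}$ factor into the desired $\lambda(\betavec^{(0)})^{-1}$ factor via the comparison lemmas established earlier in Section~\ref{s:bounds}.

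First I would control how much $\betavec$ can differ from a regular profile. Since $\norm{\betavec-\betavec^{(0)}}_\infty\le \delta_2/r$ and $\max_{j,k}|\beta^{(0)}_j-\beta^{(0)}_k|\le \delta_1/r$, the triangle inequality gives
\[
   \max_{j,k\in[n]} |\beta_j-\beta_k| \le \frac{\delta_1+2\delta_2}{r}.
\]
Together with the hypothesis $n\ge 16 e^{4\delta_1+8\delta_2}\ge 16 e^{4(\delta_1+2\delta_2)}$, this means Lemma~\ref{lem:diaggeneral-r} applies to $A(\betavec)$ with $\deltab = \delta_1+2\delta_2$. Summing the diagonal and the $n-1$ off-diagonal bounds in~\eqref{sigmabound} row by row,
\[
   \norm{A(\betavec)^{-1}}_\infty
      \le \frac{Ce^{35(\delta_1+2\delta_2)}}{\varLambda(\betavec)\binom{n-1}{r-1}}
         + (n-1)\cdot\frac{Ce^{35(\delta_1+2\delta_2)}}{\varLambda(\betavec)\binom{n-1}{r-1}n}
      \le \frac{2Ce^{35\delta_1+70\delta_2}}{\varLambda(\betavec)\binom{n-1}{r-1}}.
\]

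Next I would replace $\varLambda(\betavec)$ with $\lambda(\betavec^{(0)})$. Lemma~\ref{lem:lambdaW-ratios-different-beta} (applied with parameter $\delta_2$) gives
\[
   e^{-\delta_2}\lambda(\betavec^{(0)}) \le \lambda(\betavec) \le e^{\delta_2}\lambda(\betavec^{(0)}) \le \tfrac{7}{8},
\]
so Lemma~\ref{lem:dlacomp} applies to $\betavec$ with $\deltab=\delta_1+2\delta_2$, yielding
\[
   \varLambda(\betavec) \ge \frac{e^{-(\delta_1+2\delta_2)}}{256}\lambda(\betavec)
     \ge \frac{e^{-\delta_1-3\delta_2}}{256}\,\lambda(\betavec^{(0)}).
\]
Substituting this lower bound into the previous display produces
\[
   \norm{A(\betavec)^{-1}}_\infty
    \le \frac{2\cdot 256\cdot C e^{36\delta_1+73\delta_2}}{\lambda(\betavec^{(0)})\binom{n-1}{r-1}}
    = \frac{2^{9} C\, e^{36\delta_1+73\delta_2}}{\lambda(\betavec^{(0)})\binom{n-1}{r-1}}.
\]
Dividing by~$2$ (since $J=2A$, hence $\norm{(2A)^{-1}}_\infty = \tfrac12\norm{A^{-1}}_\infty$) gives exactly the claimed bound.

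There is no real obstacle here beyond tracking the exponential constants carefully; the only slightly delicate point is verifying that the variation of $\betavec$ (not just of $\betavec^{(0)}$) stays within the regime where Lemmas~\ref{lem:dlacomp} and~\ref{lem:diaggeneral-r} are applicable, and checking that the hypothesis $n\ge 16 e^{4\delta_1+8\delta_2}$ is exactly what is needed to absorb the enlarged variation $\delta_1+2\delta_2$ when invoking Lemma~\ref{lem:diaggeneral-r}.
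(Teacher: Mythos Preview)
Your argument is correct and follows essentially the same route as the paper: bound the coordinate spread of $\betavec$ by $(\delta_1+2\delta_2)/r$, invoke Lemma~\ref{lem:diaggeneral-r} with $\deltab=\delta_1+2\delta_2$, and then pass from $\varLambda(\betavec)$ to $\lambda(\betavec^{(0)})$ via Lemmas~\ref{lem:lambdaW-ratios-different-beta} and~\ref{lem:dlacomp}. The only cosmetic difference is that you carry the factor~$2$ from $J=2A$ to the end, whereas the paper absorbs it at the first step; the arithmetic on the exponential constants matches exactly.
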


\begin{proof}
Let $\betavec\in \Reals^n$ satisfy
$\norm{\betavec-\betavec^{(0)}}_{\infty}\le \delta_2/r$. Then
\[\max_{j,k\in[n]}|{\beta}_j-{\beta}_k|\le \max_{j,k\in[n]}|{\beta}^{(0)}_j-{\beta}^{(0)}_k|+
2\norm{\betavec - \betavec^{(0)}}_{\infty}\le \frac{\delta_1+2\delta_2}{r}.\]
Applying Lemma~\ref{lem:diaggeneral-r} for $\betavec$ implies for all sufficiently large $n$ that
\[\norm{(2A(\betavec))^{-1}}_{\infty}\le C \frac{e^{35\delta_1+70\delta_2}}{\varLambda(\betavec) \binom{n-1}{r-1}}.\]
By Lemma~\ref{lem:lambdaW-ratios-different-beta} and our assumptions we have $\lambda(\betavec)\le e^{\delta_2}\lambda(\betavec^{(0)})\le 7/8$.
Therefore the conditions of Lemma~\ref{lem:dlacomp} are satisfied and we have
\[\norm{(2A(\betavec))^{-1}}_{\infty}\le 2^8 C \frac{e^{36\delta_1+72\delta_2}}{\binom{n-1}{r-1}\lambda(\betavec)}.\]
The result follows as $\lambda(\betavec)\ge e^{-\delta_2}\lambda(\betavec^{(0)})$ by Lemma~\ref{lem:lambdaW-ratios-different-beta}.
\end{proof}

Further, we explain how to carefully choose $U$ and $\betavec^{(0)}$   depending on whether  $d$ is small relative to $\binom{n-1}{r-1}$ or not. 

\subsection{Proof of Lemma~\ref{lem:degree-sufficient}(i)}\label{ss:unique}

Recalling (\ref{eq:density}), define
	\[ \betavec^{(0)}:= \biggl( \frac 1r\log\frac{\lambda}{1-\lambda},
	\ldots, \frac 1r\log\frac{\lambda}{1-\lambda}\biggr) \]
and note that $\norm{\Psi(\betavec^{(0)})}_\infty= \max_{j\in [n]}|d-d_j|$.
Define 
\[U:=\bigl\{\betavec:\norm{\betavec-\betavec^{(0)}}_{\infty}\le \eta \norm{\Psi(\betavec^{(0)})}_\infty\bigr\}
=\bigl\{\betavec:\norm{\betavec-\betavec^{(0)}}_{\infty}\le  \eta\, \max_{j\in[n]}|d-d_j| \bigr\},\]
where $\eta=2^{10} C/d$ and $C$ is the constant from Lemma~\ref{lem:invjacobian}. 
Since $\max_{j,k\in[n]}|{\beta}^{(0)}_j-{\beta}^{(0)}_k|=\nobreak0$, we set $\delta_1:= 0$.
Now assume that $\varDelta$ is sufficiently small, in particular 
$\varDelta\le \varDelta_0 := \min\{ ( 2^{17} C)^{-1},1\}$. Then for any $\betavec\in U$,
\begin{equation}
\label{eq:64r}
\norm{\betavec-\betavec^{(0)}}_{\infty}\le \eta d\, \( e^{\varDelta/r} - 1\) 
          \leq 2\eta d\varDelta/r = \frac{2^{11} C}{d} \cdot \frac{d \varDelta }{r}\le \frac{1}{64 r}.
\end{equation}
Hence we define $\delta_2:=1/64$.
Since
\[\lambda(\betavec^{(0)})=d\binom{n-1}{r-1}^{-1}\stackrel{\eqref{eq:assumptions}}{\le} \dfrac{1}{2},\]
we deduce that
\[\lambda(\betavec^{(0)})\, e^{\delta_2}\le e^{1/64} \lambda(\betavec^{(0)})\le e^{1/64}/2\le \dfrac78. \]
Therefore the conditions of Lemma~\ref{lem:invjacobian} are met for $\delta_1$ and $\delta_2$ as
above, and we deduce for every $\betavec\in U$,
\[ \norm{J^{-1}(\betavec)}_{\infty} = \norm{(2A(\betavec))^{-1}}_{\infty}\le 2^8 C \frac{e^{73\delta_2 }}{\lambda(\betavec^{(0)})\binom{n-1}{r-1}}< \frac{2^{10}C}{d}=\eta.\]
Hence all the conditions of Lemma~\ref{l:Kowa} hold, and applying this lemma 
shows that there exists a solution $\betavecstar$ to (\ref{exact}).
Finally note that (\ref{eq:64r}) implies that 
$\max_{j,k\in [n]} |\beta^\ast_j-\beta^\ast_k| = O(1/r)$, completing the proof.

\subsection{Proof of Lemma~\ref{lem:degree-sufficient}(ii)}

For part (ii), we define $\betavec^{(0)} = (\beta_1^{(0)}, \ldots,\beta_n^{(0)})\trans $ by
\[
    \beta_j^{(0)} := \log d_j  - \frac{1}{r} \log S,
\]
where
\[
     S  :=  \frac{n-r+1}{n}  \sum_{W\in\mathcal{S}_{r-1}(n)}\, \prod_{k \in W} d_k.
\]
Note that $\max_{j,k\in[n]}|\beta^{(0)}_j - \beta^{(0)}_k| = \max_{j,k\in[n]}|\log d_j  - \log d_k|\leq   2 \varDelta/r$.
Define 
\[
    U :=  \bigl\{ \betavec \st \|\betavec - \betavec^{(0)}\|_\infty \leq \varDelta/r \bigr\}.
\]

For any $W \in \rsets$, using the assumptions of the lemma we have
\[
\lambda_W(\betavec^{(0)})= \frac{\exp\left(\sum_{k \in W} \beta^{(0)}_k \right)}{1+\exp\left(\sum_{k \in W} \beta^{(0)}_k \right)}  = O(1)\, \frac{\prod_{k\in W} d_k}{ S }
   = O(1)\, \frac{d^r}{S}.
\]
Furthermore, 
\[ 
  S= \Omega\left(\frac{n-r+1}{n}\, \binom{n}{r-1} d^{r-1}\right) = \Omega\left(\binom{n-1}{r-1}\, d^{r-1}\right),
\]
and so, using our assumption on $rd$,
\[
 \lambda_W(\betavec^{(0)}) = O\left(\frac{d}{\binom{n-1}{r-1}}\right) = o(r^{-1}).
 \]
 
It follows that for all $j\in [n]$,
Lemma~\ref{lem:lambdaW-ratios} implies that 
$\lambda_W(\betavec^{(0)})=\Theta(\lambda(\betavec^{(0)}))$,
and hence
\begin{align*}
\lambda_W(\betavec^{(0)})= \frac{\exp\left(\sum_{k \in W} \beta^{(0)}_k \right)}{1+\exp\left(\sum_{k \in W} \beta^{(0)}_k \right)}  
&= \(1+ O(\lambda(\betavec^{(0)}))\) \, \frac{\prod_{k\in W} d_k}{ S }\\
&= \(1+ o(r^{-1})\) \, \frac{\prod_{k\in W} d_k}{ S }.
\end{align*}
It follows that for all $j\in [n]$,
\[
\sum_{W \ni j} \lambda_{W} (\betavec^{(0)}) = d_j\, \(1+ o(r^{-1})\) \,
\, \frac{\sum_{W\ni j}   \prod_{k\in W-j} d_k  }{S}.
\]
Next, we observe that the quantity 
$\sum_{W \ni j } \prod_{k\in W-j} d_k $  depends insignificantly on $j$. 
Indeed,  by our assumptions   we have
\[ \sum_{W \ni \ell }\, \prod_{k\in W-\ell} d_k  = \Theta(1)\, \binom{n-1}{r-1}d^{r-1} 
\]
for $\ell\in \{j,j'\}$,
while
\begin{align*}
	\sum_{W \ni j }\, \prod_{k\in W-j} d_k - 
	\sum_{W \ni j' }\, \prod_{k\in W-j'} d_k 
	&= \sum_{\substack{W\in\mathcal{S}_{r-2}(n)\\j,j'\notin W}}   
	   (d_{j'} - d_{j}) \prod_{k\in W} d_k 
	\\ &\leq \binom{n-2}{r-2} d \(e^{\varDelta/r} -e^{-\varDelta/r}\)\, d^{r-2} e^{O(1)} \\ &= 
	O( n^{-1})  \binom{n-1}{r-1}d^{r-1}.
\end{align*}
The last line uses the fact that for any $x\in\Reals$ we have
	\begin{equation}\label{eq:exp-r}
		e^{x/r}-1\le \frac{e^x}{r}.
	\end{equation}
This shows that for any $j,j'\in [n]$,
\[
\frac{\sum_{W \ni j } \prod_{k\in W-j} d_k}{\sum_{W \ni j' } \prod_{k\in W-j'} d_k} = 
1+  O( n^{-1}).
\] 
Observe also that 
\[
\dfrac{1}{n} \sum_{j \in [n]} \sum_{W \ni j  } \prod_{k\in W-j} d_k  =
\dfrac{n-r+1}{n} \sum_{W\in\mathcal{S}_{r-1}(n)} \, \prod_{k \in W} d_k = S.
\]
Combining the above and  using the assumptions, we conclude that for all $j\in [n]$,
\begin{equation}
\label{eq:approx-sol}
\sum_{W \ni j } \lambda_{W} (\betavec^{(0)}) = 
\(1+ o(r^{-1}) + O( n^{-1})\) d_j  = (1+o( r^{-1})) d_j.
\end{equation}
Taking the average of (\ref{eq:approx-sol}) implies that
\[
\lambda(\betavec^{(0)})\binom{n-1}{r-1}=\Theta(d)
\quad \mbox{and} \quad 
\lambda(\betavec^{(0)})\, e^{\varDelta}=o(1).
\]
Applying Lemma~\ref{lem:invjacobian} with $\delta_1:=2\varDelta$ and $\delta_2:=\varDelta$,
 we conclude that for every $\betavec\in U$,
\[
\|J^{-1}(\betavec)\|_{\infty} 
=  \| (2A(\betavec))^{-1} \|_{\infty} = O \left(d^{-1}\right).
\]
By the definition of $\Psi$ and our assumptions on $d_j$, it follows from (\ref{eq:approx-sol}) that  $\|\Psi(\betavec^{(0)})\|_\infty = o(d/ r)$. 
Hence we can apply Lemma~\ref{l:Kowa} with 
$\eta := \varDelta (r \|\Psi(\betavec^{(0)}\|_\infty)^{-1} = \omega(d^{-1})$,
completing the proof.

\section{The near-regular case}\label{s:nearreg}

In this section we will prove Theorem~\ref{thm:nearreg}. 
As mentioned at the end of Section~\ref{s:intro}, we have omitted some of the calculations in this and the following section. These calculations can be verified using the identities in Section~\ref{appendix}.
It will be convenient for us to begin the analysis in the first quadrant.
By assumption~\eqref{nearreg}, Lemma~\ref{lem:degree-sufficient}(i)
guarantees the existence of a solution $\betavecstar=(\beta_1^\ast,\ldots,\beta_n^\ast)$
which satisfies~\eqref{beta-range}, 
and by Lemma~\ref{lem:unique} this solution is unique. Therefore we are justified
in applying Theorem~\ref{thm:main}.

Next, recalling (\ref{eq:density}),
define $\gammavec^\ast=(\gamma^\ast_1,\ldots,\gamma^\ast_n)$ by
\[
   \beta_j^\ast = \frac 1r \log \frac{\lambda}{1-\lambda}
   + \gamma^\ast_j, \qquad\text{for $j\in[n]$.}
\]
In the regular case, $\betavecstar$ satisfies~\eqref{exact} when
$\gammavec^\ast=\zerovec$.
For $W\in\rsets$, define $\gamma_W^\ast:= \sum_{j\in W} \gamma_j^\ast$.
In addition, for $W\in\rsets$ and $s\in\Naturals$, define
$\W_s=\W_s(W):=\sum_{j\in W} \delta_j^s$.

\begin{lemma}\label{betavalue}
  Under assumptions~\eqref{mainineq} and~\eqref{nearreg}
  in the first quadrant,
  there is a solution of~\eqref{exact} with
  \[
      \gamma^\ast_j = \frac{(n-1)\,\delta_j}{(1-\lambda)(n-r)d} 
                          - \frac{(n-2\lambda n-2r)n\,\delta_j^2}{2(1-\lambda)^2(n-r)^2d^2} 
                  +\frac{\delta_j^3}{3d^3} - \frac{rR_2}{2(n-r)^2 d^2}
                    + O(r^{-1}n^{-1}d^{-3/5})
  \]
  uniformly for $j\in [n]$.
\end{lemma}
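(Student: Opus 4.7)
The plan is to solve~\eqref{exact} by Newton-type iteration around the regular base point $\gammavec^* = \zerovec$, using the reparameterisation $\beta_j^* = \tfrac{1}{r}\log\tfrac{\lambda}{1-\lambda} + \gamma_j^*$. Under this substitution, $\sum_{j\in W}\beta_j^* = \log\tfrac{\lambda}{1-\lambda} + \gamma_W^*$ with $\gamma_W^* := \sum_{k\in W}\gamma_k^*$, so $\lambda_W = \sigma(\gamma_W^*)$ where $\sigma(x) := \lambda e^x/(1-\lambda+\lambda e^x)$. Writing $\tau := \lambda(1-\lambda)$, a Taylor expansion of $\sigma$ at~$0$ together with $\lambda\binom{n-1}{r-1}=d$ recasts~\eqref{exact} as
\[
\delta_j = \tau\,S_1(j) + \tfrac{\tau(1-2\lambda)}{2}\,S_2(j) + \tfrac{\tau(1-6\lambda+6\lambda^2)}{6}\,S_3(j) + O\bigl(\tau\,T_4(j)\bigr), \qquad j\in [n],
\]
where $S_p(j) := \sum_{W\ni j}(\gamma_W^*)^p$ and $T_4(j) := \sum_{W\ni j}|\gamma_W^*|^4$. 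The linear map $\gammavec\mapsto(\tau S_1(j))_j$ coincides with $2A^{(0)}\gammavec$ where $A^{(0)} := A(\betavec)|_{\gammavec^*=\zerovec}$; direct calculation gives $2A^{(0)} = \tau\binom{n-2}{r-1}I + \tau\binom{n-2}{r-2}J$, whose inverse via~\eqref{eq:aIbJ} equals
$
(2A^{(0)})^{-1} = \frac{n-1}{(1-\lambda)(n-r)d}\,I - \frac{r-1}{r(n-r)(1-\lambda)d}\,J.
$

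Applying $(2A^{(0)})^{-1}$ to $\deltavec$ and using $R_1=0$ to annihilate the $J$-part yields the first iterate $\gamma_j^{(1)} := \frac{(n-1)\delta_j}{(1-\lambda)(n-r)d}$, which by Lemma~\ref{lem:degree-sufficient}(i) and~\eqref{nearreg} matches $\gamma_j^*$ to $O(d^{-2/5})$ uniformly. The correction then satisfies
\[
\gammavec^* - \gammavec^{(1)} \approx -(2A^{(0)})^{-1}\Bigl[\tfrac{\tau(1-2\lambda)}{2}\,S_2(\cdot)|_{\gammavec^{(1)}} + \tfrac{\tau(1-6\lambda+6\lambda^2)}{6}\,S_3(\cdot)|_{\gammavec^{(1)}}\Bigr].
\]
Each $S_p(j)$ at $\gammavec^{(1)} = c\,\deltavec$ is a symmetric polynomial in the $\delta_k$ summed over $r$-subsets containing~$j$; the binomial identities of Section~\ref{appendix}, combined with $R_1=0$, reduce these to closed forms of the shape $\alpha_p\delta_j^p + \beta_p R_p$. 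A further application of $(2A^{(0)})^{-1}$ extracts the $\delta_j^2$- and $\delta_j^3$-terms of the statement, while the $J$-part together with the secondary shift $\Sigma := \sum_k\gamma_k^*$ (determined by summing the full system over~$j$ and using $\sum_j\delta_j=0$) combines into the single $-\tfrac{rR_2}{2(n-r)^2 d^2}$ term, the algebraic cancellations removing the spurious factors of $(1-2\lambda)$.

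The main obstacle is the error bound. The naive estimate $|\gamma_W^*|\le r\|\gammavec^*\|_\infty$ only yields remainder $O(r^4 d^{-8/5})$, much weaker than the asserted $O(r^{-1}n^{-1}d^{-3/5})$. Sharpenings are needed on three fronts: the vanishing of $R_1$ kills all $J$-part contributions from the leading iterate; the shift $\Sigma$ is substantially smaller than individual $\gamma_j^*$, as one sees by summing the whole system over~$j$; and Lemma~\ref{lem:A-entries-tight} quantifies the discrepancy between $A(\betavecstar)$ and $A^{(0)}$, justifying the replacement to the required accuracy. Propagating these cancellations through the binomial identities of Section~\ref{appendix} is the tedious but routine final computation.
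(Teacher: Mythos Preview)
Your approach and the paper's share the same setup: the reparameterisation $\beta_j^* = \tfrac{1}{r}\log\tfrac{\lambda}{1-\lambda} + \gamma_j^*$, the Taylor expansion of the edge-probability function around the regular point, and the binomial summation identities of Section~\ref{appendix}. Where they diverge is in the closing step.

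The paper does not iterate. It writes down the explicit candidate $\bar\gammavec$ (equal to the expression in the statement plus one extra term $R_2/(2n(n-r)d^2)$, which is absorbed into the error), expands $L(\bar\gamma_W)$ to fourth order, sums over $W\ni j$ using the identities in Section~\ref{appendix:j}, and verifies directly that the residual satisfies $\|\varPhi(\bar\gammavec)-\deltavec\|_\infty = O(r^{-1}n^{-1}d^{2/5})$. The existence of an exact solution within $O(r^{-1}n^{-1}d^{-3/5})$ of $\bar\gammavec$ then follows in one stroke from Lemma~\ref{l:Kowa}, using the uniform Jacobian bound $\|J^{-1}\|_\infty = O(d^{-1})$ supplied by Lemma~\ref{lem:invjacobian}.

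Your Newton iteration derives the same candidate, but the closing argument has a gap. You correctly flag that the naive remainder $O(r^4 d^{-8/5})$ is far too weak, and you list the cancellations that should save it, but you neither carry them out nor give a mechanism that pins down an \emph{exact} solution at the claimed distance from your iterate. Declaring ``tedious but routine'' the very step you have just identified as the main obstacle is not convincing. The paper's route via Lemma~\ref{l:Kowa} is precisely what resolves this: once the residual at the candidate is $O(r^{-1}n^{-1}d^{2/5})$ and the inverse Jacobian is $O(d^{-1})$ on a neighbourhood, that lemma produces the solution with the required error bound without any further iteration or delicate tracking of higher-order cancellations. You should replace your final paragraph with an explicit appeal to Lemma~\ref{l:Kowa}; the residual computation you would then need is exactly the sum you already set up, evaluated once at your candidate rather than propagated through successive corrections.
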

\begin{proof}
  Equations~\eqref{exact} can be written as $\varPhi(\gammavec)=\deltavec$,
  where $\varPhi:\Reals^n\to\Reals^n$ is defined by
  \[
      \varPhi_j(\gammavec):= \lambda(1-\lambda)
         \sum_{W\ni j} \frac{e^{\gamma_W}-1}{1+\lambda(e^{\gamma_W}-1)}
  \]
for $j\in [n]$.
 Consider $\bar\gammavec=(\bar\gamma_1,\ldots,\bar\gamma_n)$
 defined by
 \[
     \bar\gamma_j:= 
       \frac{(n-1)\,\delta_j}{(1-\lambda)(n-r)d} 
                          - \frac{(n-2\lambda n-2r)n\,\delta_j^2}{2(1-\lambda)^2(n-r)^2d^2} 
                  +\frac{\delta_j^3}{3d^3} - \frac{rR_2}{2(n-r)^2 d^2}
                  +\frac{R_2}{2n(n-r)d^2}.
 \]
 The function $L(x)=(e^x-1)/(1+\lambda(e^x-1))$ has bounded fifth derivative 
 for $\lambda\in[0,1]$, $x\in[-1,1]$, so by Taylor's theorem we have in that
 domain that
 \begin{equation}\label{Ltaylor}
     L(x) = x + \(\dfrac12-\lambda\)x^2 + \(\dfrac16-\lambda+\lambda^2\)x^3
        + \(\dfrac 1{24}-\dfrac 7{12}\lambda+\dfrac32\lambda^2-\lambda^3\)x^4
        + O(\abs{x}^5).
 \end{equation}
 For $W\in\rsets$, define $\bar\gamma_W:=\sum_{j\in W} \bar\gamma_j$.
Now
\[ \bar\gamma_W= O\biggl(d^{-1}\sum_{j\in W} \delta_j\biggr)= O(\deltamax r d^{-1}),\]
which implies that
 $(\bar\gamma_W)^5 = O(r^{-1}n^{-1}d^{-3/5})$.
 Therefore, from~\eqref{Ltaylor} we have
 \begin{equation}\label{LgammaW}
 \begin{aligned} 
   L(\bar\gamma_W) &= \frac{(n-1)\,\W_1}{(1-\lambda)(n-r)d}
      + \frac{(n^2-2\lambda n^2-2n+1)\,\W_1^2}{2(1-\lambda)^2(n-r)^2d^2}
      + \frac{(n-3)n^2\,\W_1^3}{6(n-r)^3d^3} \\
      &{\quad} +\frac{n^4\,\W_1^4}{24(n-r)^4d^4}
      - \frac{n(n-2\lambda n-2r)\,\W_2}{2(1-\lambda)^2(n-r)^2d^2}
      - \frac{(n-2r)n^2\,\W_1\W_2}{2(n-r)^3d^3} 
      + \frac{\W_3}{3d^3} \\
      &{\quad} - \frac{r(rn-n+r)\,R_2}{2(n-r)^2nd^2}
       - \frac{r^2n\,R_2\W_1}{2(n-r)^3d^3} + O(r^{-1}n^{-1}d^{-3/5}).
 \end{aligned}
 \end{equation}
Summing (\ref{LgammaW}) over the $\binom{n-1}{r-1}=d/\lambda$ sets $W$ that include $j$, 
 for each $j$, we verify that
 \begin{equation}\label{eq:fred}
       \norm{\varPhi(\bar\gammavec)-\deltavec}_\infty = O(r^{-1}n^{-1}d^{2/5}).
 \end{equation}
 These calculations rely heavily on the identities given in Section~\ref{appendix:j}.

Define $C':= 2^{10} C$, where $C$ is the constant from Lemma~\ref{lem:invjacobian},
and let
 \[ U(C') = \bigl\{ \xvec \st \norm{\xvec-\bar\gammavec}_\infty 
 \le \dfrac{C'}{d} \norm{\varPhi(\bar\gammavec)-\deltavec}_\infty\bigr\}.\]
 Define the function $\nu:\Reals^n\to\Reals^n$ by
 \[\nu(\xvec)=\frac{1}{r}\log\frac{\lambda}{1-\lambda}(1,\ldots,1)\trans +\xvec.\]
 Let $\Psi$ be the function defined in (\ref{eq:Psi-def}). Then for any $\xvec\in \Reals^n$ we have
 $\Psi(\nu(\xvec))=\varPhi(\xvec)-\deltavec$. In particular this implies that 
$J_\varPhi^{-1}(\xvec)=J_{\Psi}^{-1}(\nu(\xvec))$ where $J_\varPhi(\xvec)$ and $J_{\Psi}(\nu(\xvec))$ denote the Jacobians of $\varPhi(\xvec)$ and $\Psi(\nu(\xvec))$ respectively. 
 
We wish to apply Lemma~\ref{lem:invjacobian}.  
Then
\[ \delta_1:= r \max_{j,k\in[n]}|\nu(\bar\gamma)_j - \nu(\bar\gamma)_k|=r \max_{j,k\in[n]}|\bar\gamma_j - \bar\gamma_k|=o(1).\]
Next, using (\ref{eq:fred}),
we have that 
\[ \delta_2:= \frac{r\, C'}{d} \, \norm{\varPhi(\bar\gammavec)-\deltavec}_\infty=o(1).\]
Finally, since $\lambda(\nu(\zerovec))=\lambda\le 1/2$ and 
$\max_{j\in [n]} |\bar\gamma_j|=o(1/r)$,  Lemma~\ref{lem:lambdaW-ratios-different-beta} implies that
\[ e^{\delta_2}\, \lambda(\nu(\bar\gamma))=(1+o(1))\lambda\le 7/8. \]
Hence Lemma~\ref{lem:invjacobian} implies that for every $\xvec\in U(C')$, we have
\[
\norm{J_\varPhi^{-1}(\xvec)}_\infty=\norm{J_{\Psi}^{-1}(\nu(\xvec))}_\infty \le 
  \frac{2^8 C \, e^{o(1)}}{(1+o(1))\, d} < \frac{C'}{d}.
\]
Therefore, by Lemma~\ref{l:Kowa} 
 there exists $\xvec\in U(C')$ such that $\varPhi(\xvec)=\deltavec$. 
 Setting $\gammavec^\ast = \xvec$ proves the lemma, since 
 $\| \xvec - \bar\gammavec\|_\infty = O(r^{-1} n^{-1} d^{-3/5})$ and
 the last term of $\bar\gamma_j$ is
 $O(r^{-1}n^{-1}d^{-3/5})$.
 \end{proof}

Now we can calculate the values of the quantities that appear in
Theorem~\ref{thm:main}.

\begin{lemma}\label{LL}
   Under assumptions~\eqref{mainineq} and~\eqref{nearreg}, we have
   in the first quadrant that
  \begin{align*}
       \prod_{W\in\rsets} &\lambda_W^{\lambda_W}(1-\lambda_W)^{1-\lambda_W}\\
  &{\kern-1em}= \( \lambda^\lambda (1-\lambda)^{1-\lambda}\)^{\binom nr}
     \exp\biggl( \frac{(n-1)\,R_2}{2(1-\lambda)(n-r)d} 
       - \frac{(1-2\lambda)\,R_3}{6(1-\lambda)^2 d^2}
          + \frac{R_4}{12d^3} + O(\deltamax\, d^{-3/5}) \biggr).
 \end{align*}
 \end{lemma}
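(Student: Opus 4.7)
The plan is to take logarithms and study
\[ S := \log \prod_{W\in\rsets} \lambda_W^{\lambda_W}(1-\lambda_W)^{1-\lambda_W} = \sum_{W\in\rsets} g(\lambda_W), \]
where $g(\xi) := \xi\log\xi + (1-\xi)\log(1-\xi)$. Starting from $\lambda_W = e^{\beta_W^\ast}/(1+e^{\beta_W^\ast})$ with $\beta_W^\ast := \sum_{j\in W}\beta_j^\ast$, one has the identity $g(\lambda_W) = \lambda_W \beta_W^\ast - \log(1+e^{\beta_W^\ast})$; summing over $W$ and using (\ref{exact}) to replace $\sum_W \lambda_W \beta_W^\ast$ by $\sum_{j\in[n]} d_j\beta_j^\ast$, then writing $\beta_j^\ast = r^{-1}\log(\lambda/(1-\lambda)) + \gamma_j^\ast$ so that $1+e^{\beta_W^\ast} = (1+\lambda(e^{\gamma_W^\ast}-1))/(1-\lambda)$, yields the clean decomposition
\[
S = \binom{n}{r} g(\lambda) + \sum_{j\in[n]} \gamma_j^\ast \delta_j - \sum_{W\in\rsets} \bigl(h(\gamma_W^\ast) - \lambda \gamma_W^\ast\bigr),
\]
where $h(x) := \log(1+\lambda(e^x-1))$ and $\gamma_W^\ast := \sum_{j\in W} \gamma_j^\ast$. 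The first term on the right accounts for the $(\lambda^\lambda(1-\lambda)^{1-\lambda})^{\binom nr}$ factor in the lemma, so the remaining task is to evaluate the two explicit sums.

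Since $h$ is the cumulant generating function of a $\mathrm{Bernoulli}(\lambda)$ random variable, its Taylor expansion at $0$ has coefficients $\lambda$, $\lambda(1-\lambda)$, $\lambda(1-\lambda)(1-2\lambda)$, $\lambda(1-\lambda)(1-6\lambda+6\lambda^2)$, $\dots$, giving
\[
h(x)-\lambda x = \frac{\lambda(1-\lambda)}{2}x^2 + \frac{\lambda(1-\lambda)(1-2\lambda)}{6}x^3 + \frac{\lambda(1-\lambda)(1-6\lambda+6\lambda^2)}{24}x^4 + O(\abs{x}^5).
\]
By Lemma~\ref{betavalue} and assumption~\eqref{nearreg} we have $\gamma_W^\ast = O(r\deltamax/d) = o(1)$, so the remainder is absorbed into the claimed error. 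I would then substitute the full expansion of $\gamma_j^\ast$ from Lemma~\ref{betavalue} into $\sum_j \gamma_j^\ast \delta_j$ and $\sum_W(\gamma_W^\ast)^2$, while for $\sum_W(\gamma_W^\ast)^3$ and $\sum_W(\gamma_W^\ast)^4$ only the leading-order piece $\gamma_j^\ast \approx (n-1)\delta_j/[(1-\lambda)(n-r)d]$ is needed, since their prefactors are already $O(d^{-2})$ and $O(d^{-3})$.

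The resulting expressions are sums of monomials in $\W_1,\W_2,\W_3$ which reduce to polynomials in $R_2,R_3,R_4$ via the combinatorial identities collected in Section~\ref{appendix}; using $R_1=0$, for instance, one gets $\sum_W \W_1^2 = \binom{n-2}{r-1}R_2$. Together with the repeated use of $\lambda\binom{n-2}{r-1} = (n-r)d/(n-1)$ and its analogues for $\binom{n-3}{r-1}$ and $\binom{n-3}{r-2}$, the dominant contribution of $\sum_j \gamma_j^\ast \delta_j$ is $(n-1)R_2/[(1-\lambda)(n-r)d]$, that of $\frac{\lambda(1-\lambda)}{2}\sum_W(\gamma_W^\ast)^2$ is $(n-1)R_2/[2(1-\lambda)(n-r)d]$, and their difference is exactly the leading $(n-1)R_2/[2(1-\lambda)(n-r)d]$ of the lemma; the cubic and quartic sums yield the $-(1-2\lambda)R_3/[6(1-\lambda)^2 d^2]$ and $R_4/[12d^3]$ corrections after similar reductions, while the constant shift $-rR_2/[2(n-r)^2 d^2]$ in $\gamma_j^\ast$ makes no contribution to $\sum_j \gamma_j^\ast\delta_j$ since $R_1=0$ and only a negligible contribution to $\sum_W(\gamma_W^\ast)^2$ after the same cancellation.

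The main obstacle is the bookkeeping: one must expand $\gamma_j^\ast$ to the precision $O(r^{-1}n^{-1}d^{-3/5})$ afforded by Lemma~\ref{betavalue}, expand $(\gamma_W^\ast)^k$ to matching precision for $k=2,3,4$, and retain the fourth-order Taylor remainder for $h$, while verifying that every cross-term (in particular those of the form $\delta_j^2 R_2/d^3$ arising from the second-order piece of $\gamma_j^\ast$) falls below the overall error $O(\deltamax d^{-3/5})$. I would control these using the crude bounds $\abs{R_s}\le n\deltamax^s$ and $\deltamax=O(d^{3/5})$, and separately check that $\gamma_W^\ast = o(1)$ uniformly so that the Taylor remainder $O((\gamma_W^\ast)^5)$ summed over $W$ is dominated by the stated error.
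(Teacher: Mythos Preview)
Your decomposition
\[
  S = \binom nr g(\lambda) + \sum_{j\in[n]} \gamma_j^\ast \delta_j - \sum_{W\in\rsets}\bigl(h(\gamma_W^\ast)-\lambda\gamma_W^\ast\bigr)
\]
via the identity $g(\lambda_W)=\lambda_W\beta_W^\ast-\log(1+e^{\beta_W^\ast})$ together with~\eqref{exact} is correct and is a genuinely different route from the paper's. The paper instead writes $\lambda_W=\lambda(1+z_W)$, introduces the entropy deviation $\eta(z)=g(\lambda(1+z))-g(\lambda)-\lambda z\log\frac{\lambda}{1-\lambda}$, computes $z_W=(1-\lambda)L(\gamma_W^\ast)$ through the auxiliary function $L(x)=(e^x-1)/(1+\lambda(e^x-1))$ using Lemma~\ref{betavalue}, and then Taylor-expands $\eta(z_W)$ before summing. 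Your approach avoids composing two Taylor series and exploits~\eqref{exact} to produce the explicit $\sum_j\gamma_j^\ast\delta_j$ term directly; the paper's approach keeps the change of variables and the entropy expansion cleanly separated. Both land on the same appendix identities for $\sum_W\W_1^s$ etc.

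One point needs tightening. Your Taylor remainder for $h(x)-\lambda x$ should be $O(\lambda\abs{x}^5)$, not $O(\abs{x}^5)$: every derivative of $h(x)=\log(1+\lambda(e^x-1))$ on a bounded interval carries a factor of $\lambda$ (compare~\eqref{eq:truncTaylor}). This matters. Summing the weak bound $O(\abs{\gamma_W^\ast}^5)$ over $W\in\rsets$ gives $\binom nr(r\deltamax/d)^5$, and asking this to be $O(\deltamax d^{-3/5})$ forces $nr^4=O(\lambda d)$; assumption~\eqref{mainineq} only gives $nr^4=o(d)$, so the bound fails when $\lambda\to 0$ (for instance $r=3$ and $n\log n\ll d\ll n^{3/2}$). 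With the correct $O(\lambda\abs{x}^5)$ remainder the sum is $\lambda\binom nr(r\deltamax/d)^5=O(nr^4\deltamax^5/d^4)=O(\deltamax d^{-3/5})$ as required. Apart from this, your plan and error accounting are sound; just be sure when you say ``the cubic and quartic sums yield'' the $R_3$ and $R_4$ terms that you are collecting contributions from all three pieces ($\sum_j\gamma_j^\ast\delta_j$, the quadratic in $h$ with the higher-order parts of $\gamma_W^\ast$, and the cubic/quartic in $h$ with the leading part), as you indicate earlier.
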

\begin{proof}
Define $z_W$ by $\lambda_W=\lambda(1+z_W)$ and
\begin{align}
   \eta(z) &= \log \frac{(\lambda(1+z))^{\lambda(1+z)}
                                  (1-\lambda(1+z))^{1-\lambda(1+z)}}
                               {\lambda^\lambda(1-\lambda)^{1-\lambda}}
                   - \lambda z\log\frac {\lambda}{1-\lambda} \nonumber \\
 &= \log\biggl( (1+z)^{\lambda(1+z)}\, \Bigl(1 - \frac{\lambda z}{1-\lambda}\Bigr)^{1-\lambda(1+z)}
     \,\biggr)\nonumber \\
           &= \sum_{j=2}^\infty\,
             \Bigl( \Bigl( \frac{\lambda}{1-\lambda}\Bigr)^{\! j-1}
                + (-1)^j \Bigr) \frac{\lambda}{(j-1)j}\,z^j.\label{eq:eta-coeffs}
\end{align}
Recall that $\sum_{W\in\rsets} z_W = 0$,  therefore,
\begin{equation}\label{LL1}
  \prod_{W\in\rsets} \lambda_W^{\lambda_W}(1-\lambda_W)^{1-\lambda_W}
  = \( \lambda^\lambda (1-\lambda)^{1-\lambda}\)^{\binom nr}
     \exp\biggl(\, \sum_{W\in\rsets} \eta(z_W)\biggr).
\end{equation}
Lemma~\ref{betavalue} implies that $\gamma^\ast_W=\bar{\gamma}_W+O(n^{-1}d^{-3/5})$.
Recalling (\ref{Ltaylor}), this implies that
$L(\gamma_W^\ast)=L(\bar{\gamma}_W)+O(n^{-1}d^{-3/5})$, as $\gamma_W^\ast=o(1)$.
Using~\eqref{LgammaW}, we have
\begin{align}
    z_W &= \frac {(1-\lambda) (e^{\gamma_W^\ast}-1)}{1+\lambda(e^{\gamma_W^\ast}-1)} 
        = (1-\lambda)L_W(\gamma_W^\ast) \notag \\
    &= \frac{(n-1)\,\W_1}{(n-r)d} +\frac{ n(n-2\lambda n-2)\,\W_1^2}{2(1-\lambda)(n-r)^2d^2} 
     +\frac{n^3\,\W_1^3}{6(n-r)^3d^3}
     - \frac{ (n-2\lambda n - 2r)n \,\W_2}{2(1-\lambda)(n-r)^2d^2} \notag\\
    &{\qquad} 
    - \frac{\W_1\W_2}{2d^3} + \frac{\W_3}{3d^3} - \frac{ r^2\,R_2}{2(n-r)^2d^2} + O(n^{-1}d^{-3/5}). 
    \label{zW}
\end{align}
The coefficients of the Taylor expansion of $\eta(z)$ are uniformly $O(\lambda)$,
as shown in (\ref{eq:eta-coeffs}).
Also note that $z_W=O(\deltamax r d^{-1})=O(d^{-1/5})$.  This gives
\begin{align*}
  \eta(z_W) &= \frac{\lambda(n-1)^2\,\W_1^2}{2(1-\lambda)(n-r)^2d^2}
       + \frac{\lambda (n-2\lambda n - 3) n^2\,\W_1^3}{3(1-\lambda)^2(n-r)^3d^3}
       + \frac{\lambda n^4\,\W_1^4}{8(n-r)^4d^4}
       + \frac{\lambda\,\W_2^2}{8d^4}
       + \frac{\lambda\,\W_1\W_3}{3d^4}  \\
   &{\qquad} - \frac{\lambda (n-2\lambda n-2r)n^2\,\W_1\W_2}{2(1-\lambda)^2(n-r)^3d^3}
       - \frac{\lambda\, \W_1^2\W_2}{2d^4}
       - \frac{\lambda r^2n\,R_2\,\W_1}{2(n-r)^3d^3} + 
       O(\lambda r\deltamax n^{-1}d^{-8/5}).
\end{align*}
Using the identities in Section~\ref{appendix:all}, we can sum over all $W\in\rsets$:
\begin{equation}\label{LL2}
    \sum_{W\in\rsets} \eta(z_W) =
       \frac{(n-1)\,R_2}{2(1-\lambda)(n-r)d} 
       - \frac{(1-2\lambda)\,R_3}{6(1-\lambda)^2 d^2}
          + \frac{R_4}{12d^3} + O(\deltamax d^{-3/5}).
\end{equation}
The lemma now follows from~\eqref{LL1} and~\eqref{LL2}.
\end{proof}

Let $A_0$ be the matrix $A$ in the case that $\dvec = (d,d,\ldots, d)$.
That is,
\[  A_0 = \frac{(1-\lambda)(n-r) d}{2(n-1)}\,I + \frac{(1-\lambda)(r-1)d}{2(n-1)}\,J.\]
Then
    \begin{align}
       A_0^{-1} &= \frac{2(n-1)}{(1-\lambda)(n-r)d}\,I - \frac{2(r-1)}{(1-\lambda)r(n-r)d}\,J, \notag\\
       \abs{A_0} &= \frac{ (1-\lambda)^n r (n-r)^{n-1} d^n}{2^n (n-1)^{n-1}}
         = \frac {r\,Q^n}{2^n (n-r)(n-1)^{n-1}},\label{A0det}
   \end{align}
where the determinant follows from (\ref{eq:aIbJ}).   

\begin{lemma}\label{detvalue}
   Under assumptions~\eqref{mainineq} and~\eqref{nearreg}, we have
   in the first quadrant that
  \[
      \abs{A} = \abs{A_0} \,\exp\biggl( -\frac{R_2}{2d^2} + O(\deltamax d^{-3/5}) \biggr).
 \]
\end{lemma}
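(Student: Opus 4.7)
The approach is to write $A = A_0 + B$ and expand $\log|A|$ in powers of $A_0^{-1}B$ using the identity $\log|A| - \log|A_0| = \tr\log(I + A_0^{-1}B)$. I would set up the perturbation by letting $\xi_W := \lambda_W(1-\lambda_W)$ and using $\lambda_W = \lambda(1+z_W)$ as in the proof of Lemma~\ref{LL}; a direct computation gives
\[
  \xi_W = \lambda(1-\lambda)\biggl( 1 + \frac{1-2\lambda}{1-\lambda}\,z_W - \frac{\lambda}{1-\lambda}\,z_W^2 \biggr) =: \lambda(1-\lambda)(1+y_W),
\]
so that $B$ has entries $b_{jk} = \tfrac{\lambda(1-\lambda)}{2}\sum_{\{j,k\}\subseteq W\in\rsets}y_W$ (with the convention $\{j,j\} = \{j\}$). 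Lemma~\ref{betavalue} combined with~\eqref{zW} provides an explicit expansion of $z_W$ in terms of the power sums $\W_s$ to the accuracy needed.

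Next I would invoke the explicit form $A_0^{-1} = \tfrac{2(n-1)}{(1-\lambda)(n-r)d}I - \tfrac{2(r-1)}{(1-\lambda)r(n-r)d}J$ from the display preceding the lemma, and verify that $\|A_0^{-1}B\|_{\mathrm{op}} = o(1)$ under the hypotheses, using $z_W = O(r\deltamax/d)$ and $d \gg nr^4\log n$. This justifies the termwise expansion
\[
  \log\frac{|A|}{|A_0|} = \tr(A_0^{-1}B) - \tfrac12\tr\bigl((A_0^{-1}B)^2\bigr) + \sum_{k\ge 3}\tfrac{(-1)^{k+1}}{k}\tr\bigl((A_0^{-1}B)^k\bigr),
\]
and allows me to show that the tail $k \ge 3$ contributes only $O(\deltamax d^{-3/5})$.

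For the dominant terms, summing~\eqref{exact} over $j$ yields $\sum_{W\in\rsets}z_W = 0$, so the linear-in-$z_W$ part of $y_W$ contributes nothing to $\tr(A_0^{-1}B)$, leaving
\[
  \tr(A_0^{-1}B) = -\frac{r\lambda^2}{(1-\lambda)d}\sum_{W\in\rsets}z_W^2 + \text{sub-leading}.
\]
Using $z_W \sim (n-1)\W_1/((n-r)d)$ from~\eqref{zW} together with $\sum_W\W_1^2 = \binom{n-2}{r-1}R_2$ (which uses $R_1 = 0$; see Section~\ref{appendix:all}) produces the leading behaviour. The second-order term $\tr((A_0^{-1}B)^2)$ is evaluated by decomposing the linear-in-$z_W$ part of $B$ into the low-rank-plus-diagonal matrix $c_1(\deltavec\mathbf{1}\trans + \mathbf{1}\deltavec\trans) + c_2\diag(\deltavec)$ for explicit scalars $c_1, c_2$ of order $O(1)$, which permits an explicit trace evaluation that again produces a contribution at order $R_2/d^2$.

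The main obstacle will be the bookkeeping. Both $\tr(A_0^{-1}B)$ (through the quadratic-in-$z_W$ part of $y_W$ together with the sub-leading corrections in~\eqref{zW}) and $\tr((A_0^{-1}B)^2)$ contribute at order $R_2/d^2$; additional terms involving $R_3$, $R_4$ and cross-power-sums such as $\sum_W\W_1\W_2$ will arise but must cancel or be absorbed into the $O(\deltamax d^{-3/5})$ error. All of these pieces must combine, via the binomial identities collected in Section~\ref{appendix}, to give precisely the coefficient $-\tfrac12$ of $R_2/d^2$. The structure of this calculation closely parallels the evaluation of $\sum_W\eta(z_W)$ carried out in the proof of Lemma~\ref{LL}, so the same identities and techniques apply, with the matrix-level trace manipulations adding an extra layer of complexity.
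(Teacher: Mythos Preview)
Your approach is correct and is the natural perturbative expansion of $\log\abs{A}$. The paper's proof is organised somewhat differently: instead of expanding $\tr\log(I+A_0^{-1}E)$ directly (with $E=A-A_0$), it first factors out a diagonal piece by writing
\[
   A = A_0\,(I-D)^{-1}\,(I+M),\qquad
   D = \diag\biggl(\frac{(1-2\lambda)\delta_1}{(1-\lambda)d},\ldots\biggr),\qquad
   M = -D + (I-D)A_0^{-1}E.
\]
The point of this is that $\abs{(I-D)^{-1}}$ is an explicit product giving $\exp\bigl(R_2/(2d^2)+O(\deltamax d^{-3/5})\bigr)$, while the remaining factor $I+M$ has the leading $\delta_j/d$ terms on the diagonal cancelled, so that $m_{jj}=-\delta_j^2/d^2+O(\deltamax n^{-1}d^{-3/5})$. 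Consequently $\abs{I+M}=\exp\bigl(\tr M + O(\norm{M}_F^2)\bigr)$ already gives the answer, with $\tr M = -R_2/d^2$ and the Frobenius bound absorbing everything else; no explicit second-order trace is needed. In your setup the same cancellation is hidden inside the combination of the quadratic part of $\tr(A_0^{-1}B)$ with $-\tfrac12\tr((A_0^{-1}B)^2)$, so you must compute both and watch them interact. Both routes lead to the same $-R_2/(2d^2)$, but the paper's diagonal extraction makes the bookkeeping shorter by removing the need to evaluate $\tr((A_0^{-1}B)^2)$ precisely.
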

\begin{proof}
Define the matrix $E$ by $A=A_0+E$.  Then
   \begin{align*}
          A &= A_0 (I-D)^{-1} (I + M), \quad\text{where} \\
        D &:= \diag\biggl( \frac{(1-2\lambda)\delta_1}{(1-\lambda)d},\ldots,
                               \frac{(1-2\lambda)\delta_n}{(1-\lambda)d}\biggr) 
                               \quad\text{and}  \\
        M &:= -D + (I-D) A_0^{-1} E.
  \end{align*}
  For $W\in\rsets$ we have $\lambda_W=\lambda(1+z_W)$, where
  $z_W$ is given by~\eqref{zW}.  This gives
  \[
       \dfrac12\lambda_W(1-\lambda_W) = \dfrac12\lambda(1-\lambda)
       + \frac{\lambda(1-2\lambda)\,\W_1}{2d} + \frac{\lambda\,\W_1^2}{4d^2}
       - \frac{\lambda\,\W_2}{4d^2} + O(\lambda\deltamax n^{-1}d^{-3/5}).
  \]
  Summing over $W\ni j$ and $W\ni j,k$, using Sections~\ref{appendix:j} and~\ref{appendix:jk},
we have $E=(e_{jk})$, where  
  \[
      e_{jk} = \begin{cases}
             \,\displaystyle \dfrac12 (1-2\lambda)\delta_j 
                      + O(\deltamax n^{-1}d^{2/5}), & \text{~if $j=k$;} \\[1ex]
             \,\displaystyle \frac{(1-2\lambda)(r-1)(\delta_j+\delta_k)}{2n}
             + \frac{(r-1)\delta_j\delta_k}{2nd} + O(\deltamax r n^{-2}d^{2/5}), & \text{~if $j\ne k$.}
       \end{cases}
 \]
 This implies that $A_0^{-1}E = (e'_{jk})$, where   
 \[
     e'_{jk} = \begin{cases}
             \,\displaystyle \frac{(1-2\lambda)\delta_j}{(1-\lambda)d}
                      + O(\deltamax n^{-1}d^{-3/5}), & \text{~if $j=k$;} \\[2ex]
             \,\displaystyle \frac{(1-2\lambda)(r-1)\delta_j}{(1-\lambda)nd}
             + \frac{(r-1)\delta_j\delta_k}{nd^2} + O(\deltamax r n^{-2} d^{-3/5}), & \text{~if $j\ne k$.}
       \end{cases}
 \]
Finally, we have $M=(m_{jk})$, where   
 \[
     m_{jk} = \begin{cases}
             \,\displaystyle -\frac{\delta_j^2}{d^2}
                      + O(\deltamax n^{-1}d^{-3/5}), & \text{~if $j=k$;} \\[2ex]
             \,\displaystyle \frac{(1-2\lambda)(r-1)\delta_j}{(1-\lambda)nd}
             - \frac{(r-1)\delta_j^2}{nd^2} + \frac{(r-1)\delta_j\delta_k}{nd^2}
             + O(\deltamax r n^{-2} d^{-3/5}), & \text{~if $j\ne k$.}
       \end{cases}
 \]
To complete the proof, note that 
\[
     \abs{ (I-D)^{-1} } = \prod_{j=1}^n \,
       \biggl( 1 - \frac{(1-2\lambda)\,\delta_j}{(1-\lambda)d} \biggr)^{\! -1}
     = \exp\biggl( \frac{R_2}{2d^2} + O(\deltamax d^{-3/5}) \biggr)
\]
and, since $\norm{M}_2\le \sqrt{\norm{M}_1 \norm{M}_\infty}=o(1)$,
\begin{align*}
   \abs{ I+M }&=\prod_{j=1}^n\,(1+\mu_j)=\exp\biggl(\sum_{j=1}^n (\mu_j +O(|\mu_j|^2))\biggr) 
    = \exp\( \tr M + O(\norm{M}_F^2) \)\\
&= \exp\biggl( -\frac{R_2}{d^2} + O(\deltamax d^{-3/5}) \biggr),	
\end{align*}
where $\mu_1,\ldots,\mu_n$ are the eigenvalues of $M$ and  $\norm{M}_F$ is the Frobenius norm. 
The penultimate equality follows by~\cite[equation (3.71)]{Zhan2002}, which states that
$\sum_{j=1}^n |\mu_j|^2 \le \norm{M}_F^2$.
\end{proof}

\begin{corollary}\label{cor:nearreg}
   Under assumptions~\eqref{mainineq} and~\eqref{nearreg}, we have
   in the first quadrant that
\begin{align*}
   \Hrd &=\frac{r}{2^n\, \pi^{n/2} \, \abs{A_0}^{1/2}} 
\( \lambda^\lambda (1-\lambda)^{1-\lambda} )^{-\binom nr} \\
 &{\qquad}\times \exp\biggl(-\frac{(n-1)\,R_2}{2(1-\lambda)(n-r)d}+\frac{R_2}{4d^2}
  + \frac{(1-2\lambda)\,R_3}{6(1-\lambda)^2d^2}
  - \frac{R_4}{12d^3} + O(\bar\eps)\biggr),
\end{align*}
where $\bar\eps=\eps+ \deltamax d^{-3/5}$
and $\abs{A_0}$ is given by~\eqref{A0det}.
\end{corollary}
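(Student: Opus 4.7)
The proof is essentially a direct substitution of the two preceding lemmas into Theorem~\ref{thm:main}. The plan is as follows.

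First I would observe that under assumptions~\eqref{mainineq} and~\eqref{nearreg}, Lemma~\ref{lem:degree-sufficient}(i) guarantees the existence of a solution $\betavecstar$ to~\eqref{exact} satisfying~\eqref{beta-range}, and Lemma~\ref{lem:unique} ensures uniqueness; thus Theorem~\ref{thm:main} applies and gives
\[
  \Hrd = \frac{r(1+O(\eps))}{2^n\,\pi^{n/2}\,|A|^{1/2}}
      \prod_{W\in\rsets}\lambda_W^{-\lambda_W}(1-\lambda_W)^{-(1-\lambda_W)}.
\]
Since the derivation of both Lemma~\ref{LL} and Lemma~\ref{detvalue} was carried out in the first quadrant, we may work in the first quadrant here too.

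Next, I would invert the identity from Lemma~\ref{LL} to get
\[
  \prod_{W\in\rsets}\lambda_W^{-\lambda_W}(1-\lambda_W)^{-(1-\lambda_W)}
  =\bigl(\lambda^{\lambda}(1-\lambda)^{1-\lambda}\bigr)^{-\binom{n}{r}}
    \exp\!\biggl(-\frac{(n-1)R_2}{2(1-\lambda)(n-r)d}
           +\frac{(1-2\lambda)R_3}{6(1-\lambda)^2 d^2}
           -\frac{R_4}{12 d^3}+O(\deltamax d^{-3/5})\biggr),
\]
and similarly use Lemma~\ref{detvalue} to obtain
\[
  |A|^{-1/2}=|A_0|^{-1/2}\exp\!\Bigl(\tfrac{R_2}{4 d^2}+O(\deltamax d^{-3/5})\Bigr).
\]
Substituting these two expressions into the formula from Theorem~\ref{thm:main} and combining the exponentials yields the stated formula, with all additive error terms absorbed into $O(\bar\eps)$ where $\bar\eps=\eps+\deltamax d^{-3/5}$; note also that the prefactor $1+O(\eps)$ from Theorem~\ref{thm:main} is absorbed into the same $O(\bar\eps)$ inside the exponent since $\eps=o(1)$.

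There is no substantial obstacle: the proof is bookkeeping and the only thing to watch is the sign flips from passing from $\log\prod\lambda_W^{\lambda_W}(1-\lambda_W)^{1-\lambda_W}$ to its reciprocal and from $\log|A|$ to $-\tfrac12\log|A|$, together with confirming that both $\deltamax d^{-3/5}$ contributions (from Lemmas~\ref{LL} and~\ref{detvalue}) are the same order and are dominated by $\bar\eps$. Finally, since all terms in the statement are symmetric under the operations recorded in Table~\ref{t:symmetries} (one checks that $R_2,R_3,R_4$, $\lambda^\lambda(1-\lambda)^{1-\lambda}$ raised to $\binom{n}{r}$, and $|A_0|$ transform in a compatible way, exactly as in the proof of Theorem~\ref{thm:main}), the identity extends from the first quadrant to the full range of parameters in~\eqref{mainineq} and~\eqref{nearreg} by Lemma~\ref{lem:symmetries}, completing the proof.
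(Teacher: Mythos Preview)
Your proof is correct and follows the same approach as the paper: substitute Lemmas~\ref{LL} and~\ref{detvalue} into Theorem~\ref{thm:main} and collect error terms. One small point: your final paragraph about extending the result beyond the first quadrant via Lemma~\ref{lem:symmetries} is unnecessary here, since Corollary~\ref{cor:nearreg} is stated only for the first quadrant; the symmetry argument is deferred to the proof of Theorem~\ref{thm:nearreg}.
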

\begin{proof}
 This follows by substituting Lemmas~\ref{LL} and~\ref{detvalue}
 into Theorem~\ref{thm:main}.
\end{proof}

Finally, Theorem~\ref{thm:nearreg} removes the assumption of being in the first quadrant.

\begin{proof}[Proof of Theorem~\ref{thm:nearreg}]
Since the formula is invariant under the symmetries
and matches Corollary~\ref{cor:nearreg} within the error term in the first quadrant,
it is true in all quadrants.
To see this, observe that under either of our two symmetries, $R_3$ becomes $-R_3$ and
$(1-2\lambda)(n-2r)$ becomes $-(1-2\lambda)(n-2r)$.
\end{proof}

\section{Degrees of random uniform hypergraphs}\label{s:degree-models}

We now show how to apply the results of Section~\ref{s:nearreg} to analyse the
degree sequence of a random uniform hypergraph with a given number of edges.
 Define $B(K,x)=\binom{K}{\lambda K+x}$ where $K$, $\lambda K + x$ are integers.
The following lemma is a consequence of Stirling's expansion 
for the gamma function. 

\begin{lemma}\label{binstirling}
 Let $K,x,\lambda$ be functions of $n$ such that, as $n\to\infty$,
 $\lambda\in(0,1)$, $\lambda(1-\lambda)K\to\infty$ and $x=o\(\lambda(1-\lambda)K\)$.
 Then
 \begin{align*}
    B(K,x) &=
        \frac{\lambda^{-\lambda K-x-1/2} \,(1-\lambda)^{-(1-\lambda)K+x-1/2}}
               {\sqrt{2\pi K}} \\
    &{\quad}\times\exp\biggl( -\frac{x^2}{2\lambda(1-\lambda)K}
           - \frac{(1-2\lambda)x}{2\lambda(1-\lambda)K}
           - \frac{1-\lambda+\lambda^2}{12\lambda(1-\lambda)K}
           + \frac{(1-2\lambda)x^3}{6\lambda^2(1-\lambda)^2K^2} \\
    &{~\qquad\qquad} + \frac{(1-2\lambda+2\lambda^2)x^2}{4\lambda^2(1-\lambda)^2K^2}
           + \frac{(1-2\lambda)x}{12\lambda^2(1-\lambda)^2K^2}
           - \frac{(1-3\lambda+3\lambda^2)x^4}{12\lambda^3(1-\lambda)^3K^3} \\
       &{~\qquad\qquad}    + O\Bigl( \frac{\abs{x}^3+1}{\lambda^3(1-\lambda)^3K^3}
            + \frac{\abs{x}^5}{\lambda^4(1-\lambda)^4K^4} \Bigr)
            \biggr).
 \end{align*}
 \end{lemma}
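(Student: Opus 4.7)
The proof is essentially a careful application of Stirling's expansion to each of the three factorials in
\[
   B(K,x) = \frac{K!}{(\lambda K+x)!\,((1-\lambda)K-x)!},
\]
followed by Taylor expansion of $\log(\lambda K+x)$ and $\log((1-\lambda)K-x)$ around $\lambda K$ and $(1-\lambda)K$ respectively, and then collecting terms to the stated order. The plan is to carry this out in a sequence of mechanical steps, taking care with the truncation orders.

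First, I would invoke Stirling's series in the form
\[
   \log n! = \tfrac12\log(2\pi) + (n+\tfrac12)\log n - n
               + \frac{1}{12n} - \frac{1}{360 n^3} + O(n^{-5})
\]
for $n = K$, $n = a := \lambda K + x$, and $n = b := (1-\lambda)K - x$. Applying this to $\log B(K,x) = \log K! - \log a! - \log b!$ eliminates the $\frac12\log(2\pi)$ and $-K$ contributions, and reduces the problem to estimating
\[
   (K+\tfrac12)\log K - (a+\tfrac12)\log a - (b+\tfrac12)\log b
   + \tfrac{1}{12K} - \tfrac{1}{12a} - \tfrac{1}{12b} + O(K^{-3}/\lambda^3(1-\lambda)^3).
\]
Here I use $a,b = \Theta(\lambda(1-\lambda)K)$ so that the Stirling remainders for $a$ and $b$ are absorbed into the claimed error term.

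Second, I would write $a = \lambda K(1+u)$ with $u := x/(\lambda K)$, and $b = (1-\lambda)K(1-v)$ with $v := x/((1-\lambda)K)$, and use the Taylor expansion
\[
   \log(1+t) = t - \tfrac12 t^2 + \tfrac13 t^3 - \tfrac14 t^4 + O(t^5).
\]
Since $u,v = o(1)$ by hypothesis, this is valid. Expanding $(a+\tfrac12)\log a$ and $(b+\tfrac12)\log b$ in this way and subtracting gives, after cancellations, the leading factor $\lambda^{-\lambda K - x - 1/2}(1-\lambda)^{-(1-\lambda)K + x - 1/2}/\sqrt{2\pi K}$ together with an exponent that is a polynomial in $x$ and $1/K$. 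Similarly, the Stirling corrections $1/(12a)$ and $1/(12b)$ expand via $1/(1+t) = 1 - t + t^2 + O(t^3)$ and contribute the terms involving $(1-2\lambda)x/(12\lambda^2(1-\lambda)^2 K^2)$ and the constant $-(1-\lambda+\lambda^2)/(12\lambda(1-\lambda)K)$.

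The third step is purely bookkeeping: collect the contributions by order in $x$. The $O(x^2/K)$ piece comes from the quadratic terms in the Taylor expansion; the $O(x/K)$ piece from the linear correction involving the $\tfrac12$ in $(a+\tfrac12)\log a$; the $x^3/K^2$, $x^2/K^2$, $x/K^2$ pieces from the cubic expansion interacting with the $\tfrac12$; and the $x^4/K^3$ piece from the quartic Taylor term. The remainder is the sum of: (a) the $O(t^5)$ Taylor remainder for $\log(1\pm t)$, which yields $O(|x|^5/(\lambda^4(1-\lambda)^4 K^4))$ after multiplication by $a$ (or $b$); (b) the next Stirling correction for $a,b$, which is $O(1/(\lambda^3(1-\lambda)^3 K^3))$; and (c) the interaction of $1/(12a) - 1/(12b)$ with cubic-order $x$ terms, giving $O(|x|^3/(\lambda^3(1-\lambda)^3 K^3))$.

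The main obstacle is the careful bookkeeping: one must track which combinations of Taylor coefficients (with the shifts $\pm \tfrac12$ arising from $(n+\tfrac12)\log n$) produce each listed term, and verify that no contribution of size larger than the stated error has been dropped. Because the expressions are explicit polynomials in $\lambda$ and $x$, the check can be done systematically; the hypothesis $x = o(\lambda(1-\lambda)K)$ guarantees $u,v = o(1)$, so all series truncations are justified uniformly.
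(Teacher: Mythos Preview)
Your approach is essentially identical to the paper's: apply Stirling's expansion $N!=\sqrt{2\pi}\,N^{N+1/2}e^{-N}\exp\bigl(\frac{1}{12N}+O(N^{-3})\bigr)$ to each of the three factorials, then write $(\lambda K+x)^{\lambda K+x+1/2}=(\lambda K)^{\lambda K+x+1/2}\exp\bigl((\lambda K+x+\tfrac12)\log(1+\tfrac{x}{\lambda K})\bigr)$ and similarly for the $(1-\lambda)K-x$ factor, and expand the logarithms. The paper's proof is slightly terser but follows exactly the same mechanical steps you outline.
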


\begin{proof}
This follows from Stirling's expansion for the factorial, which we use in the form
\[ N! = \sqrt{2\pi}\, N^{N+1/2} e^{-N}\
   \exp\biggl( \frac{1}{12N} +  O(N^{-3})\biggr).
\]
From this we obtain
\begin{align*}
  B(K,x) &= \frac{K^{K+1/2}} 
                    {\sqrt{2\pi}\,  (\lambda K+x)^{\lambda K+x+1/2} ((1-\lambda)K-x)^{(1-\lambda)K-x+1/2}} \\
             &{\quad}\times\exp\biggl( \frac{1}{12K} - \frac{1}{12(\lambda K+x)}
                         - \frac{1}{12((1-\lambda)K-x)} 
                         + O\biggl(\frac {1}{\lambda^3(1-\lambda)^3K^3}\biggl) \biggl)
\end{align*}
Now write
\[
      (\lambda K+x)^{\lambda K+x+1/2} = (\lambda K)^{\lambda K+x+1/2}
     \exp\biggl( \(K+x+\dfrac12\) \log\biggl(1 + \frac{x}{\lambda K}\biggl)\biggl)
\]
and similarly for $((1-\lambda)K-x)^{(1-\lambda)K-x+1/2}$.
Expanding the logarithms gives the desired result.
\end{proof}

\bigskip

\begin{proof}[Proof of Theorem~\ref{thm:BvsD}]
 For some $p\in (0,1)$, let $X_1,\ldots, X_n$ be iid random variables with the binomial
    distribution $\Bin\(\binom{n-1}{r-1},p\)$.
    Then $\calB_r(n,m)$ is the distribution of $(X_1,\ldots,X_n)$ conditioned on the
    sum being $nd$.
    Since the sum has distribution $\Bin\(n\binom{n-1}{r-1},p\)$, we find that the
    conditional probability is independent of~$p$\,:
    \[
         \Prob_{\calB_r(n,m)}(\dvec) = \binom{n\binom{n-1}{r-1}}{nd}^{\!-1}
            \, \prod_{j=1}^n\, \binom{ \binom{n-1}{r-1}}{d_j}.
   \]
  Consequently,
   \[
      \frac{\Prob_{\calD_r(n,m)}(\dvec)}{\Prob_{\calB_r(n,m)}(\dvec)} =
      \frac{B\(n\binom{n-1}{r-1},0\)  H_r(\dvec)}
             {B\(\binom nr,0\)
               \prod\nolimits_{j=1}^{n} B\(\binom{n-1}{r-1},\delta_j\)}.
  \]
  Now use Theorem~\ref{thm:nearreg} for $H_r(\dvec)$ and
  Lemma~\ref{binstirling} for the other factors. 
\end{proof}

\smallskip
Let $Z_1,\ldots, Z_n$ be iid random variables having the hypergeometric
   distribution with parameters $\binom nr,m,\binom{n-1}{r-1}$,
   where $m=e(\dvec)$.  That is,
   \begin{equation}\label{hgdef}
      \Prob(Z_j=k) = \binom{\binom nr}{m}^{\!-1}
      \binom{\binom{n-1}{r-1}}{k} \binom{\binom nr-\binom{n-1}{r-1}}{m-k} .
  \end{equation}
Note that $Z_1$ has precisely the distribution of the degree of one vertex
in a uniformly random $r$-uniform hypergraph with $n$ vertices and $m$ edges.
Now let $\calT_r(n,m)$ be the distribution of $Z_1,\ldots,Z_n$ when
conditioned on having sum~$nd$.
If $P:=\Prob(Z_1+\cdots+ Z_n=nd)$, for which there seems to be no closed formula,
we have
\begin{equation}\label{Trdef}
      \Prob_{\calT_r(n,m)}(\dvec) = P^{-1} \binom{\binom nr}{m}^{\!-n}
      \prod_{j=1}^n \;\Biggl( 
         \binom{\binom{n-1}{r-1}}{d_j} \binom{\binom nr-\binom{n-1}{r-1}}{m-d_j}
      \Biggr).
\end{equation}

 \smallskip

 \begin{lemma}\label{hyperbounds}
 Let $Z_1,\ldots,Z_n$ be independent hypergeometric variables with
 distribution given by~\eqref{hgdef} and let
 $X_1,\ldots,X_n$ be the same conditioned on $\sum_{j=1}^n Z_j=nd$. Then
 \begin{itemize}\itemsep=0pt
   \item[\emph{(a)}] Each $Z_j$ and $X_j$ has mean $d$. Also, $Z_j$ has variance
         \begin{equation}\label{variance}
        \sigma^2 = \frac{(1-\lambda)(n-r)d^2}{nd-\lambda r}
                       = \frac{Q}{n}\biggl(1 - \binom nr^{\!\!-1}\,\biggr)^{\!-1}.
          \end{equation}
  \item[\emph{(b)}] For $t\ge 0$, we have for any $j$ that
    \[
        \Prob(\abs{Z_j-d} \ge t) \le 2\exp\biggl( -\frac{t^2}{2(d+t/3)}\biggr)
        \le \begin{cases} 2\exp\Bigl(-\dfrac {t^2}{4d}\Bigr), & 0\le t\le 3d;\\
                                 2e^{-3t/4}, & t\ge 3d.
             \end{cases}
    \]
   \item[\emph{(c)}] If $nd+y$ is an integer in $[0,mn]$, then
    \[
        \Prob\Bigl(\, \sum\nolimits_{j=1}^n Z_j=nd+y\Bigr) = 
           \frac{1}{\sigma \sqrt{2\pi n}} \exp\biggl(-\frac{y^2}{2n\sigma^2}\biggr)
             + O(n^{-1}\sigma^{-2}),
    \]
    where the implicit constant in the error term is bounded absolutely.
   \item[\emph{(d)}] For every nonnegative integer $y$, $\Prob(X_1=y)=C(y)\Prob(Z_1=y)$, where uniformly
    \[
     C(y) =  \frac{\Prob\(\sum_{j=2}^n Z_j = nd-y\)}{\Prob\(\sum_{j=1}^n Z_j=nd\)}
     = (1+O(n^{-1})) \exp\biggl( -\frac{(y-d)^2}{2(n-1)\sigma^2}\biggr) + O(n^{-1/2}\sigma^{-1}).
  \] 
    \item[\emph{(e)}] If $\sigma^2\geq 1$ then for $t>0$,
    \begin{align*}
        \E \min\{ (Z_1-d)^2,t^2\} &= \sigma^2  + O\(e^{-t^2/(4d)}d + e^{- 9d/4}d\), \\
        \E \min\{ (X_1-d)^2,t^2\} &= (1+O(n^{-1}))\, \sigma^2  + O\(e^{-t^2/(4d)}d + e^{- 9d/4}d\).
    \end{align*}
  \end{itemize}
 \end{lemma}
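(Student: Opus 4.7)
The plan is to handle parts \emph{(a)}--\emph{(e)} more or less independently, reusing earlier parts as needed. For part~\emph{(a)}, the mean and variance of a single hypergeometric $Z_j$ follow from the standard formulas: with population $\binom{n}{r}$, number of ``successes'' $m$, and sample size $\binom{n-1}{r-1}$, one gets $\E Z_j=\binom{n-1}{r-1}\,m/\binom{n}{r}=rm/n=d$ and
\[
   \Var Z_j=\binom{n-1}{r-1}\,\frac{m}{\binom{n}{r}}\cdot\frac{\binom{n}{r}-m}{\binom{n}{r}}
             \cdot\frac{\binom{n}{r}-\binom{n-1}{r-1}}{\binom{n}{r}-1},
\]
which rearranges using $m=nd/r$ and $\lambda=d/\binom{n-1}{r-1}$ to the two expressions for $\sigma^2$ claimed. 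For $X_j$, exchangeability of $Z_1,\ldots,Z_n$ is preserved by conditioning on their sum, so $\E X_j=\frac{1}{n}\sum_{i=1}^n \E X_i=d$.

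For part~\emph{(b)}, I would invoke the Bernstein/Bennett-type inequality for the hypergeometric distribution (Hoeffding 1963, or Chv\'atal 1979): since $Z_j$ stochastically arises as a sum of negatively correlated $0/1$ variables, the usual Bernstein argument applies with variance proxy $d$ and range parameter~$1$, yielding
\[
   \Prob(\abs{Z_j-d}\ge t)\le 2\exp\!\Bigl(-\tfrac{t^2}{2(d+t/3)}\Bigr).
\]
The two case bounds come by splitting at $t=3d$ and simplifying.

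Parts~\emph{(c)} and~\emph{(d)} are the technical heart. For \emph{(c)}, $S_n:=\sum_{j=1}^n Z_j$ is a sum of $n$ iid integer-valued random variables with mean $nd$, variance $n\sigma^2$, and range contained in $[0,\binom{n-1}{r-1}]$. Provided $\sigma^2\to\infty$ (which follows from the regime in which the lemma is used), a standard local central limit theorem via Fourier inversion, applied to the characteristic function $\varphi_{Z_1}(t)$ of $Z_1$, gives
\[
   \Prob(S_n=nd+y)=\frac{1}{\sigma\sqrt{2\pi n}}\exp\!\Bigl(-\frac{y^2}{2n\sigma^2}\Bigr)+O(n^{-1}\sigma^{-2}),
\]
with the error term absorbing both the Edgeworth correction and the tail of $\varphi_{Z_1}^n$ away from the origin; the required bound $|\varphi_{Z_1}(t)|\le 1-c$ on $[\delta,\pi]$ comes from $Z_1$ being non-degenerate and integer-valued. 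The main potential obstacle is the edge cases where $\sigma^2$ is only moderate, but we only need the statement in the regime used to prove~\eqref{thm:DvsT} and~\eqref{thm:Tconj}, where $\sigma^2=\Theta(Q/n)$ is large.

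Part~\emph{(d)} is immediate by definition: writing
\[
   \Prob(X_1=y)=\frac{\Prob(Z_1=y)\,\Prob(\sum_{j=2}^n Z_j=nd-y)}{\Prob(\sum_{j=1}^n Z_j=nd)},
\]
applying \emph{(c)} to numerator and denominator (with $n-1$ and $n$ summands respectively), and expanding the exponential factor $\exp(-(y-d)^2/(2(n-1)\sigma^2))$ against the baseline $\exp(0)$; the $(1+O(n^{-1}))$ factor comes from replacing $\sqrt{n}$ with $\sqrt{n-1}$ and from a Taylor expansion of the Gaussian exponents, while the additive $O(n^{-1/2}\sigma^{-1})$ absorbs both LCLT error terms after division. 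Finally for \emph{(e)}, I would write
\[
   \E\min\{(Z_1-d)^2,t^2\}=\Var Z_1-\E\bigl[((Z_1-d)^2-t^2)\mathbf{1}_{|Z_1-d|>t}\bigr],
\]
and bound the correction by integrating the tail bound from~\emph{(b)} by parts; the two cases $t\le 3d$ and $t>3d$ give the two contributions in the error. For $X_1$, combining with~\emph{(d)} gives the same estimate with an extra $(1+O(n^{-1}))$ factor, since the Gaussian reweighting in~\emph{(d)} multiplies the distribution by $1+O(n^{-1})$ uniformly on the bulk $|y-d|\le t$, and the tail contribution beyond $t$ remains negligible by~\emph{(b)} applied to $Z_1$ together with the uniform lower bound on $P=\Prob(S_n=nd)$ from~\emph{(c)}.
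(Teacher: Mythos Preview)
Your treatment of (a), (d), and the $Z_1$ half of (e) matches the paper's. The substantive difference is in (b) and especially (c): the paper does not run a direct LCLT on the iid hypergeometrics. Instead it invokes a result of Vatutin and Mikhailov that each hypergeometric $Z_j$ is exactly a sum of $m$ \emph{independent} (not merely negatively associated) Bernoulli variables. This immediately gives Bernstein for (b), and for (c) it turns $\sum_{j=1}^n Z_j$ into a sum of $mn$ independent Bernoullis, for which the paper cites a local limit theorem of Fountoulakis, Kang and Makai with an absolutely bounded error constant. Your Fourier-inversion sketch would need a uniform bound $|\varphi_{Z_1}(t)|\le 1-c$ on $[\delta,\pi]$ with $c$ independent of $n,r,m$; as written you only get nondegeneracy, and you acknowledge this by retreating to ``we only need it when $\sigma^2$ is large,'' which contradicts the lemma's claim of an absolute constant. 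The Bernoulli representation is what buys the uniformity cheaply.

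There is also a gap in your argument for the $X_1$ half of (e). You assert that the reweighting $C(y)$ from (d) equals $1+O(n^{-1})$ uniformly on the ``bulk'' $|y-d|\le t$, but (d) gives $C(y)=(1+O(n^{-1}))\exp\bigl(-(y-d)^2/(2(n-1)\sigma^2)\bigr)+O(n^{-1/2}\sigma^{-1})$, and the exponential factor is $1+O(n^{-1})$ only when $(y-d)^2=O(\sigma^2)$, not for all $|y-d|\le t$. The paper instead writes $\E\bigl[(X_1-d)^2\bigr]-\sigma^2=\sum_j (C(j)-1)\Prob(Z_1=j)(j-d)^2$, bounds $|\exp(-x)-1|\le x$, and controls the resulting sum by the fourth central moment $\E\bigl[(Z_1-d)^4\bigr]=O(\sigma^4)$ (quoting an exact formula from Kendall--Stuart). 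That fourth-moment step is the missing ingredient in your plan.
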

 \begin{proof}
    Part (a) is standard theory of the hypergeometric distribution.
    For parts (b) and (c), we note that Vatutin and Michailov~\cite{VM}
    proved that $Z_j$ can be expressed as the sum of $m$ independent
    Bernoulli random variables (generally with different means).
    Inequality (b) is now standard (see~\cite[Theorem~2.1]{JansonRG}),
    while (c) was proved by Fountoulakis, Kang and Makai~\cite[Theorem~6.3]{FKM}.
    
For part (d), the standard formula for conditional probability implies that
the expression for $\Prob(X_1=y)$ holds with
     $C(y) =  \dfrac{\Prob\(\sum_{j=2}^n Z_j = nd-y\)}{\Prob\(\sum_{j=1}^n Z_j=nd\)}$.
Then by part (c) we have
\begin{align*}
\Pr\Bigl(\sum_{j=2}^n Z_j = nd-y\Bigr) &= \frac{1}{\sigma\sqrt{2\pi(n-1)}}\, \exp\left(\frac{-(y-d)^2}{2(n-1)\sigma^2}\right) + O(n^{-1}\sigma^{-2}),\\
\Pr\Bigl(\sum_{j=1}^n Z_j = nd\Bigr) &= \frac{1}{\sigma\sqrt{2\pi n}}\(1 + O(n^{-1/2}\sigma^{-1})\),
\end{align*}
and dividing the first expression
by the second gives the stated approximation for $C(y)$.

    For (e), we have
    \[
        \E \min\{ (Z_1-d)^2,t^2\} = \sigma^2 -
          \sum_{\abs\ell>t}\, (\ell^2 - t^2)\, \Prob(Z_1=d+\ell),
    \]
    where the sum is restricted to integer $d+\ell$.  We will consider the
    upper tail, noting that the lower tail is much the same:
    \begin{align*}
       \sum_{\ell>t}\, (\ell^2 - t^2)\, \Prob(Z_1=d+\ell)
         &= \sum_{\ell>t}\, (\ell^2-t^2)\( \Prob(Z_1\ge d+\ell)-\Prob(Z_1\ge d+\ell+1)\) \\
         &\le (2t+1)\Prob(Z_1\ge d+t) + \sum_{\ell>t}\, (2\ell+1)\, \Prob(Z_1\ge d+\ell+1).
    \end{align*}
    Now we can use the first case of part (b) to obtain the bound
    $O(e^{-t^2/(4d)}d)$ and the second case to obtain the bound $O(e^{-9d/4}d)$.

    For the second part of (e),
we have
\begin{align*}
\E\((X_1-d)^2\) &= \sigma^2 + \sum_j \( C(j)-1 \)\, \Pr(Z_1=j)\, (j-d)^2\\
                &= \sigma^2 + \sum_j \left(\exp\left(-\frac{(j-d)^2}{2(n-1)\sigma^2}\right) - 1 + O(1/n)\right)\, \Pr(Z_1=j)\, (j-d)^2\\
    &= \sigma^2\(1 + O(n^{-1})\) + O\biggl(\frac{\E\( (Z_1 - d)^4\)}{n\sigma^2}\biggr).
\end{align*}
Since $\sigma^2\geq 1$, the fourth central moment of $Z_1$ satisfies $\E\((Z_1-d)^4\)
=O(\sigma^4)$,
as follows from the exact expression given in~\cite[equation (5.55)]{KS}.
Therefore
\[ \E\((X_1-d)^2\) = \sigma^2\(1 + O(n^{-1})\).\]
Then the effect of truncation at $t$ can be 
    bounded as before, using the fact that $C(\ell)=\nobreak O(1)$.
 \end{proof}
 
\begin{proof}[Proof of Theorem~\ref{thm:DvsT}]
 From the definitions of $\calD_r(n,m)$ and~\eqref{Trdef}, we have
 \[
      \frac{\Prob_{\calD_r(n,m)}(\dvec)}{\Prob_{\calT_r(n,m)}(\dvec)}
      = 
      \frac{B\(\binom nr,0\)^{n-1}\,P\,H_r(\dvec)}
      {\prod_{j=1}^n \Bigl( B\(\binom{n-1}{r-1},\delta_j\) 
            B\(\binom nr-\binom{n-1}{r-1},-\delta_j\)\Bigr)}.
 \]
 Now use Theorem~\ref{thm:nearreg} for $H_r(\dvec)$,
 Lemma~\ref{hyperbounds}(c) for $P$, and Lemma~\ref{binstirling}
 for the other factors. 
\end{proof}

For the proof of Theorem~\ref{thm:Tconj} we need a concentration lemma.

\begin{lemma}\label{concen}
  Let $f(x_1,\ldots,x_K):\{0,1\}^K\to\Reals$ be a function such that
  $\abs{f(\xvec)-f(\xvec')}\le a$ whenever $\xvec,\xvec'$ differ in only
  one coordinate.
  Let $\Zvec=(Z_1,\ldots,Z_K)$ be independent Bernoulli variables
  (not necessarily identical),
  conditioned on having constant sum~$S$.
  Then, for any $t\ge 0$,
  \[
      \Prob\( \abs{f(\Zvec)-\E f(\Zvec)} > t\) \le 2\exp\biggl( -\frac{t^2}{8a^2 S}\biggr).
  \]
\end{lemma}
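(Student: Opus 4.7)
The plan is to apply Azuma--Hoeffding to a Doob martingale of length $S$ (not $K$), obtained by revealing the positions of the ones in $\Zvec$ one at a time. The target constant is consistent with this choice: $2\sum_{k=1}^S (2a)^2 = 8a^2S$, so it suffices to produce $S$ martingale differences each bounded by $2a$.

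Let $J_1 < J_2 < \cdots < J_S$ denote the positions of the ones in $\Zvec$, set $\mathcal{F}_k := \sigma(J_1,\ldots,J_k)$, and let $M_k := \E[f(\Zvec)\mid \mathcal{F}_k]$, so that $M_0 = \E f(\Zvec)$ and $M_S = f(\Zvec)$. Writing $g(v) := \E[f(\Zvec)\mid \mathcal{F}_{k-1}, J_k=v]$, we have $M_k-M_{k-1} = g(J_k) - \E[g(J_k)\mid\mathcal{F}_{k-1}]$, so it suffices to show $|g(v)-g(v')|\le 2a$ for any two feasible values $v,v'$. To establish this I will construct a coupling of the conditional laws $\mu_v,\mu_{v'}$ of $\Zvec$ given $\mathcal{F}_{k-1}$ and $J_k=v$ (resp.\ $v'$) under which the coupled realisations differ in Hamming distance at most $2$ (only at positions $v$ and $v'$); the bounded-difference hypothesis on $f$ then gives $|g(v)-g(v')|\le 2a$, and Azuma--Hoeffding yields
\[
\Pr\(|f(\Zvec)-\E f(\Zvec)|>t\) = \Pr\(|M_S-M_0|>t\) \le 2\exp\biggl(-\frac{t^2}{8a^2S}\biggr).
\]

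The main obstacle is the construction of the Hamming-distance-$2$ coupling. For non-identical Bernoullis the conditional distribution on the slice $\{\sum z_j = s\}$ is not uniform, so one cannot simply interchange the roles of $v$ and $v'$. The natural tool is the FKG / log-concavity property of the conditional Bernoulli family: outside $\{v,v'\}$, both $\mu_v$ and $\mu_{v'}$ restrict to conditional Bernoulli distributions on the same index set, with target sums differing by at most one depending on the realised values of $Z_{v'}$ under $\mu_v$ and of $Z_v$ under $\mu_{v'}$. These marginal distributions can be monotonically coupled so that any residual discrepancy is absorbed by the forced flips at $v$ and $v'$. Should the explicit coupling prove awkward, an alternative is to use a variance-based martingale bound (Freedman's inequality or a Bennett-type bound), combined with the estimate $\sum_k \Var(Z_{J_k}\mid \mathcal{F}_{k-1}) = O(S)$ arising from the conditional Bernoulli structure; this would yield an inequality of the same shape, possibly with a slightly different numerical constant.
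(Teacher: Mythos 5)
The paper's own proof is a two-line citation: conditional Bernoulli measures are strong Rayleigh by Pemantle--Peres~\cite[Example 5.4]{PP}, and then one applies their Lipschitz concentration theorem~\cite[Theorem~3.1]{PP} for homogeneous strong Rayleigh measures, which gives exactly the constant $8a^2S$. Your plan is to re-derive that theorem from scratch via a Doob martingale, and the key step you isolate --- a Hamming-distance-$\le 2$ coupling between the conditional laws after fixing the $k$-th ``one'' --- is indeed where the work lies. However, the specific filtration you choose makes this step false, not just awkward.

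If $J_1<\dots<J_S$ are the positions of the ones in increasing index order and you compare $J_k=v$ with $J_k=v'>v$ (given $\mathcal{F}_{k-1}$), then under $\mu_{v'}$ \emph{every} coordinate in the interval $(J_{k-1},v')$ is forced to zero, whereas under $\mu_v$ the coordinates in $(v,v')$ are free and can carry an arbitrary number of ones. No coupling can keep the Hamming distance at $2$: if $\mu_v$ places $j$ ones in $(v,v')$, the distance is at least $j$, and this happens with non-negligible probability even for the uniform slice measure (all $p_j=1/2$). So the filtration that ``reveals the positions of the ones in increasing order'' does not produce $2a$-bounded increments, and the Azuma computation does not go through. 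The filtration that \emph{does} work reveals the chosen elements in a random order (e.g.\ a uniformly random or adaptively chosen ordering of the set, not the index ordering); then the two conditional laws live on index sets that differ only by swapping $v$ for $v'$, and a Hamming-$\le 2$ coupling becomes plausible. But proving that coupling exists for general (non-identical) conditional Bernoullis is precisely the ``stochastic covering property'' established in Pemantle--Peres as a consequence of strong Rayleigh; the ``FKG / log-concavity'' and ``monotone coupling'' you gesture at are not routine and are in fact the main theorem you would be reproving. The Freedman/Bennett fallback suffers the same defect: with your filtration the increments themselves are unbounded, so controlling the predictable quadratic variation alone does not rescue the argument.

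In short: the approach (Doob martingale $+$ Hamming coupling, $S$ steps, increments $\le 2a$) is the right shape and is morally what Pemantle--Peres do, but (i) you have chosen a filtration for which the claimed coupling provably fails, and (ii) even with the correct filtration, the existence of the coupling is a nontrivial negative-dependence fact (strong Rayleigh $\Rightarrow$ stochastic covering) that needs to be proved or cited, not assumed. The paper sidesteps all of this by citing~\cite{PP} directly.
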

\begin{proof}
 According to Pemantle and Peres~\cite[Example~5.4]{PP}, the measure defined
 by independent
 Bernoulli variables conditioned on a fixed sum has the ``strong Rayleigh'' property.
The proof is completed by applying~\cite[Theorem~3.1]{PP}.
\end{proof}

\begin{proof}[Proof of Theorem~\ref{thm:Tconj}] 
Probabilities in the hypergeometric distribution are symmetric under the
two operations (that is, replacing $r$ by $n-r$, or replacing $m$ by $\binom{n}{r}-m$).
Since the error term given in the theorem is also symmetric under
these operations, it suffices to assume that $(r,\dvec)$ belongs to the first quadrant.

Define
\[
   R_2(\dvec) := \sum_{j=1}^n\,(d_j-d)^2 \quad\text{and}\quad
   R'_2(\dvec) := \sum_{j=1}^n\,\min\{ (d_j-d)^2, d\log^2 n\},
\]
and
\[
   \frakW := \bigl\{ \dvec \st \deltamax\le d^{1/2}\log n\text{~and~}
                  \abs{R_2(\dvec)-n\sigma^2}\le n^{1/2}\sigma^2\log^2 n\bigr\}.
\]
Let $Z_1,\ldots,Z_n$ be iid random variables with distribution~\eqref{hgdef}. 
The distribution $\calT_r(n,m)$ is that of $(Z_1,\ldots,Z_n)$ conditioned on $\sum_{j=1}^n Z_j=nd$.
    
By the union bound,  we have
  \begin{align*}
     \Prob_{\calT_r(n,m)}\(\abs{R_2(\dvec)&-n\sigma^2} > n^{1/2}\sigma^2\log^{2}n\) 
      \le \Prob_{\calT_r(n,m)}\(R_2(\dvec)\ne R'_2(\dvec)\) \\ 
         &{\quad}+ \Prob_{\calT_r(n,m)}\(\abs{R'_2(\dvec)-\E R'_2(\dvec)} 
             >n^{1/2}\sigma^2\log^2 n - \abs{n\sigma^2-\E R'_2(\dvec)}\).
  \end{align*}
      Since always $C(i)=O(1)$, Lemma~\ref{hyperbounds}(b,d) and the union bound give
  \[
       \Prob_{\calT_r(n,m)}\(R_2(\dvec)\ne R'_2(\dvec)\) \le 
       n \sum_{i\mathrel{:}\abs{i-d}>d^{1/2}\log n} \Prob_{\calT_r(n,m)}(Z_1=i)\, C(i)
       = n^{-\Omega(\log n)}.
  \]
Next, note that in $\calT_r(n,m)$ we have
  $\abs{n\sigma^2-\E R'_2(\dvec)} =O(\sigma^2)=O(d)$ by Lemma~\ref{hyperbounds}(e);
  for later use note that this only relies on the condition $\deltamax\le d^{1/2}\log n$.
  Recall that each $Z_j$ is the sum of $m$ independent Bernoulli variables,
  so $R'_2(\dvec)$ is a function of $mn$ independent Bernoulli variables
  conditioned on fixed sum~$nd$.
Changing one of the Bernoulli variables changes the corresponding $d_j$ by one
and changes $d$ by $1/n$.
Overall, this changes the value of 
$R'_2(\dvec)$ by at most $2+4d^{1/2}\log n$. 
Applying Lemma~\ref{concen}, we have
  \begin{equation}\label{R2tail}
        \Prob_{\calT_r(n,m)}\(\abs{R'_2(\dvec)-\E R'_2(\dvec)} >n^{1/2}\sigma^2\log^2 n
           - \abs{n\sigma^2-\E R'_2(\dvec)}\) = n^{-\Omega(\log n)}.
  \end{equation}
 Therefore, $\Prob_{\calT_r(n,m)}(\frakW) = 1 - n^{-\Omega(\log n)}$.
 Now we can apply Theorem~\ref{thm:DvsT} to obtain
 \[
     \Prob_{\calD_r(n,m)}(\dvec)=\(1 + O(\eps+n^{1/10}Q^{-1/10}\log n + n^{-1/2}\log^2 n)\)
        \Prob_{\calT(n,m)}(\dvec)
 \]
 for $\dvec\in\frakW$.  Here, $\eps$ and $n^{1/10}Q^{-1/10}\log n$ come
 from the error terms in Theorem~\ref{thm:DvsT}, while $n^{-1/2}\log^2 n$ comes
 from the term $R_2/Q$ in Theorem~\ref{thm:DvsT} since $n\sigma^2=Q(1+O(n^{-1/2}\log^2 n))$
 in~$\frakW$, by the definition of $\frakW$ and~\eqref{variance}.

Now consider the probability space $\calD_r(n,m)$.
Since the distribution of each individual degree is the same as the distribution of~$Z_1$,
using a union bound and applying Lemma~\ref{hyperbounds}(b) gives
$\Prob_{\calD_r(n,m)} (\deltamax > d^{1/2}\log n) = n^{-\Omega(\log n)}$ and
hence
\[
     \Prob_{\calD_r(n,m)} (R_2(\dvec)\ne R'_2(\dvec)) = n^{-\Omega(\log n)}.
\]
In~\cite{KLW}, concentration of $R_2(\dvec)$ in $\calD_r(n,m)$ is shown using a lemma
on functions of random subsets. However, that approach (at least, using the same
concentration lemma) apparently only works for $r=o(n/\log n)$, so we will adopt a different
approach.

By the same argument as used to prove~\eqref{R2tail},
\[
     \Prob_{\calT_r(n,m)}\(\abs{R_2(\dvec)-n\sigma^2} >k n^{1/2}d\log^2 n 
        \bigm|  \deltamax\le d^{1/2}\log n \) \le e^{-C k^2\log^2 n}
\]
for any positive integer $k$ and some constant $C>0$ independent of~$k$.
(We have used $\abs{n\sigma^2-\E R'_2(\dvec)} =O(d)$ as before.)

If $R_2(\dvec)\leq (k+1)\, n^{1/2}\sigma^2\log^2 n$ then
$-\dfrac{1}{2} + \dfrac{R_2(\dvec)}{2Q} \leq \dfrac{(k+1)\log^2 n}{2n^{1/2}} + o(1)$
and so applying Theorem~\ref{thm:DvsT} gives 
\begin{align*}
     \Prob_{\calD_r(n,m)}\(k n^{1/2}\sigma^2\log^2 n
         < \abs{R_2(\dvec)&-n\sigma^2} \le (k+1) n^{1/2}\sigma^2\log^2 n 
      \bigm| 
\deltamax\le d^{1/2}\log n \) \\
     & \le \exp\Bigl( -Ck^2\log^2 n + \frac{(k+1)\log^2 n}{2n^{1/2}} + o(1) \Bigr).
\end{align*}
Summing over $k\ge 1$, we have
\[
     \Prob_{\calD_r(n,m)}\(\abs{R_2(\dvec)-n\sigma^2} >n^{1/2}\sigma^2\log^2 n 
   \bigm| \deltamax\le d^{1/2}\log n \) = n^{-\Omega(\log n)},
\]
and therefore $\Prob_{\calD_r(n,m)}(\frakW)=1-n^{-\Omega(\log n)}$,
completing the proof.
\end{proof}

\section{Deferred proofs}\label{s:technical}

\subsection{Proof of Lemma~\ref{lem:symmetries}}\label{s:proof-symmetries}

We begin with the operation of replacing each edge by its complement in~$V$,
which sends $d_j$ to $d'_j=e(\dvec)-d_j$ for each~$j$.
Recall that 
\[\beta'_j= \frac{1}{n-r} \biggl(\, \sum_{\ell\in [n]} \beta^\ast_\ell \biggr)  - \beta^\ast_j\]
and note that for all $j,k\in [n]$,
\[
  |\beta'_j -\beta'_k|= \biggl|\,\frac{1}{n-r} \biggl(\, \sum_{\ell\in [n]} \beta^\ast_\ell \biggr) 
   - \beta^\ast_j - \frac{1}{n-r}\biggl(\, \sum_{\ell\in [n]} \beta^\ast_\ell \biggr)  + \beta^\ast_k \,\biggr|=|\beta^\ast_j-\beta^\ast_k|.
\]
In addition, for any $W\in \rsets$ we have
\[  \sum_{j\in V\setminus W}  \beta'_j  = \frac{n-r}{n-r} \biggl(\, \sum_{\ell\in [n]} \beta^\ast_\ell \biggr)  - \sum_{j \in V\setminus W} \beta^\ast_j
= \sum_{j \in W} \beta^\ast_j.\]
Therefore for any $W\in\rsets$ we have
\begin{equation}\label{eq:lambdacompl}
\lambda_{V\setminus W}(\betavec')=\frac{e^{\sum_{k\in V\setminus W}\beta_k'}}{1+e^{\sum_{k\in V\setminus W}\beta_k'}}=\frac{e^{\sum_{k\in W}\beta^\ast_k}}{1+e^{\sum_{k\in W}\beta^\ast_k}}=\lambda_W(\betavecstar).
\end{equation}
Note that summing~\eqref{exact} over~$j$ each edge is counted $r$ times, so
$\sum_{W\in\rsets} \lambda_W(\betavecstar)=e(\dvec)$. Hence 
\[ 
\sum_{\substack{W\ni j\\ W\in \mathcal{S}_{n-r}(n)}} \!\!\lambda_W(\betavec')\stackrel{\eqref{eq:lambdacompl}}{=}\sum_{\substack{W\not \ni j\\ W\in \mathcal{S}_{r}}}\lambda_W(\betavecstar)=\sum_{W\in\rsets}\lambda_W(\betavecstar)-\sum_{\substack{W \ni j\\ W\in \mathcal{S}_{r}}}\lambda_W(\betavecstar) = e(\dvec)-d_j,
\]
proving that $(\dvec',\betavec')$ satisfies~\eqref{exact}.
It only remains to show that
\begin{equation}
\label{eq:detA-identity}
\frac{|A(\betavec')|}{(n-r)^2}=\frac{|A(\betavecstar)|}{r^2}.
\end{equation}
For $W\subseteq[n]$, define the $n\times n$ matrix $\varXi_W$ by
\[
     (\varXi_W)_{jk} = \begin{cases} \,1,& \text{~if $j,k\in W$;} \\
                                                    \,0,& \text{~otherwise}.
                             \end{cases}
\]
Then,
\[
   A(\betavecstar) = \sum_{W\in\rsets} \lambda_W(\betavecstar)(1-\lambda_W(\betavecstar))\,\varXi_W.
\]
Now note that $(I-\frac1r J)\varXi_W(I-\frac1r J)=\varXi_{V\setminus W}$ for any~$W\in\rsets$.
(The case $W=\{1,\ldots,r\}$ is representative and easy to check.)
Together with~\eqref{eq:lambdacompl}, this proves that
\[ \Bigl(I-\dfrac1r J\Bigr) \, A(\betavecstar) \, \Bigl(I-\dfrac1r J\Bigr) = A(\betavec').\]
Finally, $\abs{I-\frac1r J} = -\dfrac{n-r}{r}$ by (\ref{eq:aIbJ}), which proves (\ref{eq:detA-identity}).

\medskip
Next, consider the operation that complements the edge set, sending
$d_j$ to $\medtilde d_j=\binom{n-1}{r-1}-d_j$ without changing the edge size.
Recall that $\medtilde\beta_j=-\beta_j^\ast$ for each~$j$.  Then
$|\medtilde\beta_j - \medtilde\beta_k| = |\beta_j^\ast - \beta_k^\ast|$ for all $j,k$.
Note that for any $W\in \rsets$ we have
\[
\lambda_W(\medtilde\betavec)=\frac{e^{\sum_{k\in W}\medtilde\beta_k}}{1+e^{\sum_{k\in W}\medtilde\beta_k}}
=\frac{e^{-\sum_{k\in W}\beta^\ast_k}}{1+e^{-\sum_{k\in W}\beta^\ast_k}}=1-\lambda_W(\betavecstar),
\]
which implies that $A(\medtilde\betavec)=A(\betavecstar)$. 
In addition,
\[\sum_{\substack{W\ni j}}\lambda_W(\medtilde\betavec)
=\binom{n-1}{r-1}-\sum_{\substack{W\ni j}}\lambda_W(\betavecstar)=\binom{n-1}{r-1}-d_j
=\medtilde d_j,
\]
proving that $(\medtilde\dvec,\medtilde\betavec)$ satisfies~\eqref{exact}.
 
 \medskip
 
 The third operation, which complements both the edges and the edge set simultaneously,
 is just the composition of the first two in either order. Hence the result for this
operation follows immediately, completing the proof.

\subsection{Proof of Lemma~\ref{lem:diaggeneral-r}}\label{s:proof-diaggeneral-r}

The following lemmas will be useful.

\begin{lemma}[{\cite[(1.13)]{Higham}}]\label{rank1pwr}
	For $p \in\Reals$, define
	\[
	\alpha_p(x) := \frac{(1+x^2)^p-1}{x^2}.
	\]
	Then, for $\xvec\in\Reals^n$,
	\[
	(I + \xvec\xvec\trans)^p = I + \alpha_p(\norm{\xvec}_2)\, \xvec\xvec\trans.
	\]
	Also, for $x\geq 0$,
	$\abs{\alpha_{-1/2}(x)} \leq x^{-2}$
	and $\abs{\alpha_{1/2}(x)} \leq x^{-1}$.
\end{lemma}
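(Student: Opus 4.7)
The plan is to exploit the fact that $I + \xvec\xvec\trans$ is a rank-one perturbation of the identity with an especially simple spectrum. First I would dispose of the degenerate case $\xvec = \zerovec$, where the claimed identity reads $I = I$, with the convention that the product $\alpha_p(\norm{\xvec}_2)\,\xvec\xvec\trans$ is interpreted as the zero matrix (so the removable singularity of $\alpha_p$ at $0$ is harmless). For $\xvec \ne \zerovec$ the key is to introduce the orthogonal projection $P := \xvec\xvec\trans/\norm{\xvec}_2^2$ onto the line spanned by $\xvec$, which satisfies $P^2 = P$ and $P\trans = P$. Its complement $I - P$ is then also an orthogonal projection, with $P(I-P) = (I-P)P = 0$. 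This gives the spectral decomposition
\[
  I + \xvec\xvec\trans = (I - P) + \(1 + \norm{\xvec}_2^2\)\,P,
\]
a sum of two mutually annihilating idempotents with eigenvalues $1$ and $1 + \norm{\xvec}_2^2$, both strictly positive.

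Because the two idempotents in this decomposition commute and annihilate each other, any real power is obtained by raising each coefficient separately:
\[
  (I + \xvec\xvec\trans)^p = (I - P) + \(1 + \norm{\xvec}_2^2\)^p\,P = I + \bigl(\(1+\norm{\xvec}_2^2\)^p - 1\bigr)\,P.
\]
Substituting $P = \xvec\xvec\trans/\norm{\xvec}_2^2$ yields $(I + \xvec\xvec\trans)^p = I + \alpha_p(\norm{\xvec}_2)\,\xvec\xvec\trans$, which is the first assertion.

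For the two estimates on $\alpha_{\pm 1/2}$, I would work directly from the definition. The case $x = 0$ is vacuous (both bounds are infinite), so assume $x > 0$. For $p = -1/2$, we have $(1+x^2)^{-1/2} \in (0,1]$, so $\alpha_{-1/2}(x) \le 0$ and
\[
  \abs{\alpha_{-1/2}(x)} = \frac{1 - (1+x^2)^{-1/2}}{x^2} \le \frac{1}{x^2}.
\]
For $p = 1/2$, rationalize the numerator of $\alpha_{1/2}(x) = \((1+x^2)^{1/2} - 1\)/x^2$ to obtain $\alpha_{1/2}(x) = 1/\((1+x^2)^{1/2} + 1\)$; since $(1+x^2)^{1/2} \ge x$ for $x \ge 0$, the denominator is at least $x + 1 > x$, giving $\abs{\alpha_{1/2}(x)} \le 1/x$.

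The proof is entirely routine, so there is no substantive obstacle; the only point worth flagging is that raising $I + \xvec\xvec\trans$ to an arbitrary real power is legitimate because the matrix is symmetric positive definite with two explicit positive eigenvalues, so the functional calculus reduces to applying the scalar power to each eigenvalue.
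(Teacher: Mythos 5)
Your proof is correct. Note that the paper does not prove this lemma at all: it is quoted directly from Higham's \emph{Functions of Matrices}, equation (1.13), with the scalar bounds on $\alpha_{\pm 1/2}$ left as immediate consequences of the definition. So there is nothing in the paper to compare against step by step; what you have done is supply the standard self-contained argument that the citation stands in for. Your route — writing $P=\xvec\xvec\trans/\norm{\xvec}_2^2$, decomposing $I+\xvec\xvec\trans=(I-P)+\(1+\norm{\xvec}_2^2\)P$ into mutually annihilating orthogonal idempotents, and applying the scalar power to each eigenvalue via the functional calculus for a symmetric positive definite matrix — is exactly how this identity is usually derived, and your handling of the removable singularity at $\xvec=\zerovec$ is appropriate. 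The two scalar estimates are also correct: $\abs{\alpha_{-1/2}(x)}=\(1-(1+x^2)^{-1/2}\)/x^2\le x^{-2}$ since $(1+x^2)^{-1/2}\in(0,1]$, and rationalising gives $\alpha_{1/2}(x)=1/\((1+x^2)^{1/2}+1\)\le 1/x$ since $(1+x^2)^{1/2}\ge x$; for $x=0$ both bounds hold vacuously. The only benefit of the paper's choice is brevity; the benefit of yours is that the lemma no longer depends on the external reference.
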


For a matrix $X=(x_{jk})$, $\maxnorm{X}:= \max_{j,k} \abs{x_{jk}}$ is a matrix
norm that is not submultiplicative.
The following is a special case of a lemma in~\cite{mother}.

\begin{lemma}[{\cite[Lemma~4.9]{mother}}]\label{diagonal}
	Let $M$ be a real symmetric positive definite $n\times n$
	matrix with 
	\[
	\maxnorm{M-I} \le \frac{\kappa }{n} \ \
	\text{~~and~~} \ \
	\xvec\trans\! M\xvec \ge \gamma\, \xvec \trans\xvec
	\]
	for some $1\geq \gamma >0$, $\kappa>0$ and all $\xvec\in\Reals^n$.
	Then the following are true.
	\begin{itemize}\itemsep=0pt
		\item[\emph{(a)}] 
		\[
		\maxnorm{ M^{-1} - I} \leq \frac{(\kappa+\gamma) \kappa  }{\gamma n}.
		\]
		
		\item[\emph{(b)}] There exists a real matrix $T$ such that $T\trans\! M T=I$ and
		\[ \norm{T}_1, \norm{T}_\infty 
		\le \frac{\kappa+\gamma^{1/2}}{\gamma^{1/2}}, 
		\ \ \ \ 
		\norm{T^{-1}}_1, \norm{T^{-1}}_\infty
		\le \frac{(\kappa+1)(\kappa+\gamma^{1/2})}{\gamma^{1/2}}. \]
	\end{itemize}
	
\end{lemma}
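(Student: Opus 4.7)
The strategy is to compare $A(\betavec)$ to a simple reference matrix of the form $aI+bJ$ and then apply Lemma~\ref{diagonal}. I would take $A_0:=aI+bJ$ with $a=\tfrac12\varLambda(\betavec)\binom{n-2}{r-1}$ and $b=\tfrac12\varLambda(\betavec)\binom{n-2}{r-2}$; this is the value that $A(\betavec)$ would take if every $\lambda_W(1-\lambda_W)$ coincided with the average $\varLambda(\betavec)$. The spectrum of $A_0$ is transparent: eigenvalue $a+bn=\tfrac r2\varLambda(\betavec)\binom{n-1}{r-1}$ on the span of $\mathbf 1$ and eigenvalue $a$ on $\mathbf 1^\perp$. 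Consequently $A_0^{-1/2}=\alpha I+\beta J$ with explicit $\alpha=1/\sqrt a$ and $\alpha+n\beta=1/\sqrt{a+bn}$, and $\|A_0^{-1/2}\|_1=\|A_0^{-1/2}\|_\infty=O(\varLambda(\betavec)^{-1/2}\binom{n-1}{r-1}^{-1/2})$.

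The key trick is to choose the normalisation carefully. I would first set $D:=\diag(A(\betavec))$ and consider $\bar A:=D^{-1/2}A(\betavec)D^{-1/2}$; by construction $\bar A$ has diagonal identically $1$, and Lemma~\ref{lem:A-entries-tight} forces its off-diagonal entries within factor $e^{\pm12\deltab/r}$ of $(r-1)/(n-1)$. Decomposing $\bar A=\tilde M_0+\tilde E$ where $\tilde M_0:=\tfrac{n-r}{n-1}I+\tfrac{r-1}{n-1}J$, the residual $\tilde E$ has zero diagonal and off-diagonal entries bounded by $(e^{12\deltab/r}-1)\tfrac{r-1}{n-1}$. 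Conjugating by $P:=\tilde M_0^{-1/2}$, which has the same rank-one-plus-identity form with $\|P\|_1,\|P\|_\infty=O(1)$ (the hypothesis $n\ge 16e^{4\deltab}$ keeps the relevant condition numbers bounded), the matrix $N:=P\bar A P=I+P\tilde E P$ satisfies $\maxnorm{N-I}\le\|P\|_\infty^2\maxnorm{\tilde E}=O(\kappa/n)$ for some $\kappa=O(e^{O(\deltab)})$, and positive definiteness of $A(\betavec)$ is inherited by $N$ with coercivity $\gamma=\Omega(1)$.

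Lemma~\ref{diagonal} then supplies a matrix $T''$ with $T''^{\top}\! N T''=I$ and $\|T''\|_1,\|T''\|_\infty=O(e^{O(\deltab)})$, together with the entry-wise bound $\maxnorm{N^{-1}-I}=O(\kappa/n)$. Setting $T:=D^{-1/2}PT''$ gives $T^{\top}\! A(\betavec)T=I$, and the required norm bound on $T$ follows by combining those for $D^{-1/2}$, $P$, and $T''$. Since $A(\betavec)^{-1}=TT^{\top}=D^{-1/2}PN^{-1}PD^{-1/2}$, substituting $N^{-1}=I+O(\kappa/n)$ in maxnorm and using that $P^2=\tilde M_0^{-1}$ has diagonal $\sim1$ and off-diagonal $\sim-1/n$ yields the claimed bounds on $\sigma_{jj}$ and $\sigma_{jk}$, with the factor $e^{35\deltab}$ absorbing accumulated multiplicative constants. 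For the containments involving $\calR(\rho)$, note that $T$ is a small perturbation of $D^{-1/2}P$, whose action sends $\yvec$ to a vector with $j$th coordinate approximately $D_{jj}^{-1/2}(\alpha'y_j+\beta'\sum_k y_k)$, where $\alpha',\beta'$ are the scalars defining $P=\alpha'I+\beta'J$ with $\alpha'+n\beta'=1/\sqrt r$; matching the two defining constraints of $\calR(\rho)$ (box and sum) against this image forces $\rho_1,\rho_2=\Theta(\rho\sqrt{\varLambda(\betavec)\binom{n-1}{r-1}})$, and the factor $r^{-1/2}$ built into $\calR(\rho)$ is precisely what allows the sum-direction constraint to match the $\mathbf 1$-eigendirection contribution.

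The principal obstacle is the normalisation choice in the second paragraph. A naive conjugation of $A(\betavec)$ directly by $A_0^{-1/2}$ produces a matrix whose maxnorm deviation from $I$ is dominated by multiplicative errors in the diagonal of $A(\betavec)$, and is therefore only $O(\deltab/r)$—too large to yield a useful $\kappa$ in Lemma~\ref{diagonal}. Pre-normalising by $D^{-1/2}$ makes the diagonal of $\bar A$ exactly $1$, so Lemma~\ref{lem:A-entries-tight}'s multiplicative errors appear only in the off-diagonal of $\bar A$, where they gain an additional factor $b/a\sim r/n$ and shrink to the $O(\deltab/n)$ scale required by Lemma~\ref{diagonal}.
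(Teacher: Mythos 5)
There is a genuine gap: your proposal does not prove the statement in question at all. The lemma you were asked to prove is a self-contained linear-algebra fact about an arbitrary real symmetric positive definite matrix $M$ satisfying $\maxnorm{M-I}\le\kappa/n$ and $\xvec\trans M\xvec\ge\gamma\,\xvec\trans\xvec$; nothing in it refers to $A(\betavec)$, $\lambda_W$, $\varLambda(\betavec)$, $\deltab$, or the region $\calR(\rho)$. What you have sketched is instead a proof of Lemma~\ref{lem:diaggeneral-r} (the application of this machinery to the specific matrix $A(\betavec)$), and in the middle of your argument you write ``Lemma~\ref{diagonal} then supplies a matrix $T''$\dots'' --- i.e.\ you invoke exactly the statement you were supposed to establish, which makes the attempt circular as a proof of that statement and leaves parts (a) and (b) unproved.

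For comparison: the paper itself gives no proof of this lemma either --- it is quoted verbatim from \cite[Lemma~4.9]{mother} --- so a blind proof would have to argue directly from the hypotheses. For part (a) one can use that $I-M$ commutes with $M^{-1}$ to write $M^{-1}-I=(I-M)+(I-M)M^{-1}(I-M)$, bound the entries of the second term by $\norm{M^{-1}}_2\,\norm{(I-M)e_j}_2\,\norm{(I-M)e_k}_2\le\gamma^{-1}\cdot(\kappa/\sqrt n)^2$, and add $\maxnorm{I-M}\le\kappa/n$ to obtain $(\kappa+\gamma)\kappa/(\gamma n)$. For part (b) one must exhibit a concrete $T$ with $T\trans MT=I$ (e.g.\ $T=M^{-1/2}$) and derive the stated $\norm{\cdot}_1$ and $\norm{\cdot}_\infty$ bounds for $T$ and $T^{-1}$ from the same two hypotheses. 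None of these steps appears in your write-up, so as a proof of Lemma~\ref{diagonal} it is missing entirely; as a sketch of Lemma~\ref{lem:diaggeneral-r} it is broadly in the spirit of the paper's Section~\ref{s:proof-diaggeneral-r}, but that is a different statement.
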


The next result will be used to find a change of basis matrix to
invert $A(\betavec)$.

\begin{lemma}\label{motherL49}
	Let $\bar{A}=D+\svec\svec\trans+X$ be a symmetric positive definite real
	matrix of order~$n$, where $D$ is a positive diagonal matrix and
	$\svec\in\Reals^n$.
	Define these quantities:
	\begin{align*}
		\gamma &:= \text{a value in }(0,1)\text{ such that }
		\xvec\trans\! \bar{A}\xvec\geq\gamma\,\xvec\trans (D+\svec\svec\trans)\xvec
		\text{ for all }\xvec\in\Reals^n, \\
		\Dmin,\Dmax &:= \text{the minimum and maximum
			diagonal entries of }D, \\
		B &:=1+\Dmax \Dmin^{-1} \norm{\svec}_1\norm{\svec}_{\infty}\norm{\svec}_2^{-2},\\
		\kappa &:= B^2\Dmin^{-1}\, n\, \maxnorm{X}.
	\end{align*}
	Then there is a real $n\times n$ matrix $T$ such that
	$T\trans\! \bar A T=I$ and the following are true:
	
	\begin{itemize}\itemsep=0pt
		\item[\emph{(a)}]
		\[
		\maxnorm{\bar{A}^{-1}-(D+\svec\svec\trans)^{-1}}
		\leq \frac{B^2\kappa(\kappa+1)}
		{\Dmin\gamma n},
		\text{ ~where}
		\] 
		\[
		(D+\svec\svec\trans)^{-1} =
		D^{-1} - \frac{D^{-1}\svec\svec\trans D^{-1}}
		{1 + \norm{D^{-1/2}\svec}_2^2};
		\]
		\item[\emph{(b)}]
		\[
		\norm{T}_1, \norm{T}_\infty \leq
		B \Dmin^{-1/2}\gamma^{-1/2}(\kappa+1);
		\]
		\item[\emph{(c)}]  For any $\rho>0$, define 
		\[
		\calQ(\rho):= U_n(\rho) \cap 
		\biggl\{ \xvec\in\Reals^n \st \abs{\svec\trans\xvec}  \leq
		\frac{\Dmax\norm{\svec}_1}
		{ \Dmin^{1/2}\norm{\svec}_2} \rho \biggr\}.
		\]
		Then
		\[
		T\( U_n(\rho_1)\) \subseteq \calQ(\rho) \subseteq T\(U_n(\rho_2)\),
		\]
		where 
		\begin{align*}
			\rho_1:=  \dfrac{1}{B} \, \Dmin^{1/2}\, \gamma^{1/2}\, (\kappa + 1)^{-1}\rho, \qquad
			\rho_2:= B \Dmax^{\, 1/2}\, \gamma^{-1/2}\, (\kappa + 1)^2\rho.
		\end{align*}
	\end{itemize}
\end{lemma}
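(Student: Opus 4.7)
The plan is to diagonalize the dominant part $M := D + \svec\svec\trans$ of $\bar A = M + X$ via an explicit change of basis, then apply Lemma~\ref{diagonal} to a matrix of the form $I + (\text{small})$.  Set $\boldsymbol u := D^{-1/2}\svec$ and
\[
    S := D^{-1/2}\(I + \alpha_{-1/2}(\norm{\boldsymbol u}_2)\,\boldsymbol u\boldsymbol u\trans\).
\]
Using Lemma~\ref{rank1pwr} together with the algebraic identity $1 + \alpha_{-1/2}(\norm{\boldsymbol u}_2)\norm{\boldsymbol u}_2^2 = (1+\norm{\boldsymbol u}_2^2)^{-1/2}$, a direct computation shows $S\trans M S = I$; therefore $\widetilde M := S\trans \bar A S = I + S\trans X S$, and the coercivity hypothesis $\xvec\trans \bar A \xvec \ge \gamma\,\xvec\trans M \xvec$ transfers (via $\xvec = S\yvec$) to $\widetilde M \succeq \gamma I$.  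A row-by-row estimate of $S$, using $\abs{\alpha_{-1/2}(\norm{\boldsymbol u}_2)} \le \norm{\boldsymbol u}_2^{-2}$ together with $\norm{\boldsymbol u}_\infty \le \Dmin^{-1/2}\norm{\svec}_\infty$ and $\norm{\boldsymbol u}_1 \le \Dmin^{-1/2}\norm{\svec}_1$, yields the symmetric bound $\norm{S}_1 = \norm{S}_\infty \le B\Dmin^{-1/2}$.  Consequently $\maxnorm{S\trans X S} \le \norm{S}_1^2\,\maxnorm{X} \le \kappa/n$, so Lemma~\ref{diagonal} applies to $\widetilde M$ with constants $\kappa$ and~$\gamma$.

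Parts (a) and (b) now fall out quickly.  For (a), since $S\trans M S = I$ we have $M^{-1} = S S\trans$, so $\bar A^{-1} - M^{-1} = S(\widetilde M^{-1} - I)S\trans$; this gives $\maxnorm{\bar A^{-1} - M^{-1}} \le \norm{S}_1^2\,\maxnorm{\widetilde M^{-1} - I}$, and bounding $\maxnorm{\widetilde M^{-1} - I}$ by Lemma~\ref{diagonal}(a) together with $\kappa + \gamma \le \kappa + 1$ gives the stated bound.  The closed form for $M^{-1}$ is just the Sherman--Morrison formula.  For (b), Lemma~\ref{diagonal}(b) produces $\widetilde T$ with $\widetilde T\trans \widetilde M \widetilde T = I$ and $\norm{\widetilde T}_{1,\infty} \le (\kappa + \gamma^{1/2})/\gamma^{1/2}$; take $T := S\widetilde T$, so $T\trans \bar A T = I$ and $\norm{T}_{1,\infty} \le \norm{S}_{1,\infty}\norm{\widetilde T}_{1,\infty} \le B\Dmin^{-1/2}\gamma^{-1/2}(\kappa+1)$ after absorbing $\gamma^{1/2} \le 1$.

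Part (c) is the main obstacle, because $\calQ(\rho)$ is not a box and the extra constraint $\abs{\svec\trans\xvec} \le (\Dmax\norm{\svec}_1/\Dmin^{1/2}\norm{\svec}_2)\,\rho$ has to be tracked through the change of basis.  The two algebraic identities driving everything are
\[
    \svec\trans S = (1+\norm{\boldsymbol u}_2^2)^{-1/2}\boldsymbol u\trans,
    \qquad S^{-1} = \(I + \alpha_{1/2}(\norm{\boldsymbol u}_2)\,\boldsymbol u\boldsymbol u\trans\) D^{1/2},
\]
the second by a further application of Sherman--Morrison.  For $\yvec \in U_n(\rho_1)$ and $\xvec = T\yvec = S\widetilde T\yvec$, the first identity gives $\abs{\svec\trans\xvec} \le (1+\norm{\boldsymbol u}_2^2)^{-1/2}\norm{\boldsymbol u}_1\norm{\widetilde T}_\infty\rho_1$, and combined with $\norm{\xvec}_\infty \le \norm{T}_\infty\rho_1 \le \rho$, the choice of $\rho_1$ yields $\xvec \in \calQ(\rho)$; the required numerical inequality $\Dmin^{1/2}\norm{\svec}_2 \le B\Dmax(1+\norm{\boldsymbol u}_2^2)^{1/2}$ is verified by splitting on whether $\norm{\svec}_2^2 \le \Dmax$, using $\norm{\boldsymbol u}_2^2 \ge \Dmax^{-1}\norm{\svec}_2^2$ and $B \ge 1$.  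The second identity gives the coordinate formula $(S^{-1}\xvec)_i = D_{ii}^{1/2}x_i + \alpha_{1/2}(\norm{\boldsymbol u}_2)\,u_i\,(\svec\trans\xvec)$; combining $\abs{\alpha_{1/2}(\norm{\boldsymbol u}_2)} \le \Dmax^{1/2}\norm{\svec}_2^{-1}$ and $\abs{u_i} \le \Dmin^{-1/2}\norm{\svec}_\infty$ with the two constraints defining $\calQ(\rho)$ yields $\norm{S^{-1}\xvec}_\infty \le B\Dmax^{1/2}\rho$, and then $\norm{T^{-1}\xvec}_\infty \le \norm{\widetilde T^{-1}}_\infty\, B\Dmax^{1/2}\rho \le B\Dmax^{1/2}\gamma^{-1/2}(\kappa+1)^2\rho = \rho_2$, establishing the second inclusion.
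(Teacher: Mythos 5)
Your proposal is correct and follows essentially the same route as the paper: it uses the rank-one square-root transform of Lemma~\ref{rank1pwr} (your $S$ is exactly the paper's $D^{-1/2}T_1$, your $\widetilde T$ its $T_2$), reduces to Lemma~\ref{diagonal} with the same $\gamma$ and $\kappa$, and tracks the constraint defining $\calQ(\rho)$ through the same two identities. The only nitpick is that the claimed equality $\norm{S}_1=\norm{S}_\infty$ need not hold since $S=D^{-1/2}T_1$ is not symmetric, but both norms are bounded by $B\Dmin^{-1/2}$, which is all your argument uses.
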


\begin{proof}
	Define $\svec_1:= D^{-1/2}\svec$, $X_1:=D^{-1/2}XD^{-1/2}$, $T_1:=(I+\svec_1\svec_1\trans)^{-1/2}$
	and $X_2:=T_1\trans\,X_1\, T_1$.
	By Lemma~\ref{rank1pwr}, we have
	\begin{equation}\label{eq:T1expand}
	T_1=
	I+\alpha_{-1/2}(\norm{\svec_1}_2) \svec_1\svec_1\trans,
	\end{equation}
	and note that $T_1$ is symmetric, that is, \ $T_1=T_1\trans$.
	Therefore
	\begin{equation}\label{eq:transform}
	\bar{A}=D+\svec\svec\trans+X=D^{1/2}\left(I+\svec_1\svec_1\trans+X_1\right)D^{1/2}=D^{1/2}T_1^{-1}\left(I+X_2\right)T_1^{-1}D^{1/2}.	
	\end{equation}
    Recall that by Lemma~\ref{rank1pwr} we have $|\alpha_{-1/2}(\norm{\svec_1}_2)|\le \norm{\svec_1}_2^{-2}$, so by \eqref{eq:T1expand},
	\begin{equation}\label{T1norms}
		\norm{T_1}_1,\,\, \norm{T_1}_\infty 
		\leq 1 + \frac{\norm{\svec_1}_1\norm{\svec_1}_\infty}
		{\norm{\svec_1}_2^2} \leq 1+\frac{\Dmax \norm{\svec}_1\norm{\svec}_{\infty}}{\Dmin \norm{\svec}_2^2}=B.
	\end{equation}
	
	Next we apply Lemma~\ref{diagonal} with $M=I+X_2$.
	By \eqref{eq:transform}, 
$\xvec\trans \bar A\xvec \geq 
	\gamma\xvec\trans(D+\svec\svec\trans)\xvec$
	is equivalent to 
	\[
	(T_1^{-1}D^{1/2} \xvec)\trans \, (I+X_2)T_1^{-1}D^{1/2}\xvec
	 \ge \gamma\, (T_1^{-1}D^{1/2}\xvec)\trans\, T_1^{-1}D^{1/2}\xvec
	\]
	 for all $\xvec\in \Reals^n$.
	Also  \[ \maxnorm{X_2}\leq\Dmin^{-1}\norm{T_1}_\infty^2\maxnorm{X}
	\stackrel{(\ref{T1norms})}{\leq} B^2\Dmin^{-1}\maxnorm{X} = \frac{\kappa}{n}. \]
         Therefore, $M,\gamma,\kappa$ satisfy the conditions of Lemma~\ref{diagonal}.
	Consequently, there exists a transformation
$T_2$ such that $T_2\trans(I+X_2)T_2=I$. This, together with \eqref{eq:transform} implies that
	$T=D^{-1/2}T_1T_2$ satisfies $T\trans \bar AT=I$.
	In addition, by Lemma~\ref{diagonal}(b), we have
	\begin{equation}\label{T2norms}
		\norm{T_2}_1,\norm{T_2}_\infty \leq \gamma^{-1/2}(\kappa+1),
		\qquad
		\norm{T_2^{-1}}_1,\norm{T_2^{-1}}_\infty \leq \gamma^{-1/2}(\kappa+1)^2.
	\end{equation}
	Together with \eqref{T1norms} and $\norm{D^{-1/2}}_1,\norm{D^{-1/2}}_\infty\le \Dmin^{-1/2}$, this proves part (b).
	
	Next we prove the first inclusion of part~(c).  Let $\xvec\in U_n(\rho_1)$,
	that is, $\norm{\xvec}_\infty\leq \rho_1$.
	Then $\norm{T\xvec}_\infty\leq\norm{T}_\infty \, \rho_1 \leq \rho$ by part~(b),
	so $T \xvec\in U_n(\rho)$.
	Next
	\[
	\abs{\svec\trans T\xvec} = \abs{\svec_1\trans T_1 T_2\xvec}
	\leq \norm{T_1\svec_1}_1 \norm{T_2\xvec}_\infty.
	\]
	From~\eqref{T2norms}, $\norm{T_2\xvec}_\infty\leq 
	\gamma^{-1/2}( \kappa+1)\rho_1$.
	Also \eqref{eq:T1expand} gives
	$T_1\svec_1=(1+\norm{\svec_1}_2^2)^{-1/2}\svec_1$,
	so
	\[ \norm{T_1\svec_1}_1\leq \norm{\svec_1}_1 \norm{\svec_1}_2^{-1}\le \Dmax^{1/2} \norm{\svec}_1 \Dmin^{-1/2} \norm{\svec}_2^{-1}.\]
	Combining these bounds proves the inclusion, as $B\ge 1$.
	
	For the second inclusion of part~(c), consider $\xvec\in \calQ(\rho)$.
	Lemma~\ref{rank1pwr} implies that $T_1^{-1}=I+\alpha_{1/2}(\norm{\svec_1}_2) \svec_1\svec_1\trans$, and hence
	\[
	\norm{T^{-1}\xvec}_\infty = \norm{T_2^{-1}T_1^{-1}D^{1/2}\xvec}_\infty
	\leq \norm{T_2^{-1}}_\infty
	\Norm{D^{1/2}\xvec
		+ \alpha_{1/2}(\norm{\svec_1}_2)\svec_1 \svec\trans\xvec}_\infty.
	\]
	Now apply~\eqref{T2norms} to $\norm{T_2^{-1}}_\infty$,
	the first part of the definition of $\calQ(\rho)$ to $\norm{D^{1/2}\xvec}_\infty$,
	the second part of the definition of $\calQ(\rho)$ to $\abs{\svec\trans\xvec}$,
	and recall from Lemma~\ref{rank1pwr} that
	$\abs{\alpha_{1/2}(\norm{\svec_1}_2)} \leq \norm{\svec_1}_2^{-1}$. Then we have
	\begin{align*}
	\norm{T^{-1}\xvec}_\infty 
	&\le  \gamma^{-1/2}( \kappa+1)^2 \biggl(\Dmax^{1/2}\rho+\frac{\norm{\svec_1}_\infty}
	{\norm{\svec_1}_2}\cdot \frac{\Dmax \norm{\svec}_1}{\Dmin^{1/2} \norm{\svec}_2}\rho\biggr)\\
	&\le \gamma^{-1/2}( \kappa+1)^2 \Dmax^{1/2}\, \rho \biggl(1+\frac{\Dmax\norm{\svec}_1\norm{\svec}_\infty}
	{\Dmin\norm{\svec}_2^2}\biggr)=\rho_2. 
	\end{align*}
	
	Finally, we prove part (a).  Define $X_3:=(I+X_2)^{-1}-I$.
	By \eqref{eq:transform} and since $T_1=T_1\trans$ we have $T_1\trans D^{-1/2} \bar A D^{-1/2} T_1=I+X_2$. Together with
	$T_1=(I+\svec_1\svec_1\trans)^{-1/2}$, this implies
	\[
	\bar A^{-1}= D^{-1/2}\, T_1\left(I+X_2\right)^{-1}\, T_1D^{-1/2} 
	  = D^{-1/2}\, T_1 X_3 T_1\trans\, D^{-1/2}+(D+\svec\svec\trans)^{-1}.
	\]
	By Lemma~\ref{diagonal}(a), $\maxnorm{X_3} \leq  \kappa( \kappa + 1)\gamma^{-1}n^{-1}$ and thus using \eqref{T1norms} we have
	\[
	\norm{\bar A^{-1}-(D+\svec\svec\trans)^{-1}}_{\infty}
	\le \Dmin^{-1}\, \norm{T_1}_1^2 \, \maxnorm{X_3} \le \frac{B^2  \kappa( \kappa + 1)}{\Dmin \,\gamma n}.
	\]
The expression for $(D+\svec\svec\trans)^{-1}$ follows from the Sherman--Morrison theorem
(see for example~\cite[equation (3.8.2)]{meyer}). 
\end{proof}

\begin{proof}[Proof of Lemma~\ref{lem:diaggeneral-r}]
	Define $\check\varLambda:=\varLambda(\betavec)$ and
	\[c:=\sqrt{\dfrac{1}{2}\check\varLambda \binom{n-2}{r-2}}.\]
	Then let $\svec:= (c,c, \ldots, c)\trans$
	and $D:=\operatorname{diag}(a_{11}-c^2,\ldots, a_{nn}-c^2)$.
	We write $A(\betavec) = D + \svec\svec\trans  + X$.
	
	First we show that the entries of $X$ are small. 
	Note that all the diagonal entries of $X$ are exactly 0. 
	By Lemma~\ref{lem:A-entries-tight}, the absolute value of any off-diagonal entry 
in $X$ is at most
	\begin{equation}\label{eq:xmax}
		|a_{jk}-c^2|\le (e^{4\deltab/r}-1)\check\varLambda \binom{n-2}{r-2}\stackrel{\eqref{eq:exp-r}}{\le} \frac{e^{4\deltab}}{r}\check\varLambda \binom{n-2}{r-2}.
	\end{equation}
	In addition, Lemma~\ref{lem:A-entries-tight} also implies that for any $1\le j \le n$ we have
	\begin{align}
		a_{jj}-c^2&\ge \dfrac{1}{2}e^{-4\deltab/r} \check\varLambda \binom{n-1}{r-1}
		    - \dfrac{1}{2}\check\varLambda\binom{n-2}{r-2}
		= \dfrac{1}{2}e^{-4\deltab/r} \check\varLambda \binom{n-1}{r-1} \left(1 -\frac{(r-1)e^{4\deltab/r}}{n-1}\right)\nonumber\\
		&\stackrel{\eqref{eq:exp-r}}{\ge} \dfrac{1}{2}e^{-4\deltab/r} \check\varLambda \binom{n-1}{r-1} \left(1 -\frac{(r-1)(e^{4\deltab}+r)}{r(n-1)}\right) 
		\ge \dfrac{1}{2}e^{-4\deltab/r} \check\varLambda \binom{n-1}{r-1} \left(1 -\frac{e^{4\deltab}+r}{n-1}\right)\nonumber\\
		&\ge \dfrac15 e^{-4\deltab/r} \check\varLambda \binom{n-1}{r-1},\label{eq:lower-bound-diag}
	\end{align}
	where in the last step we used $r\le n/2$ and $n\ge 16e^{4\deltab}$.
		
	Consider the value of $\gamma$ as in Lemma~\ref{motherL49}. For any $\yvec\in\Reals^n$ we have
	\begin{align*}
		\yvec\trans A(\betavec) \yvec
		&=\dfrac{1}{2}\sum_{W\in \rsets} \lambda_{W}(\betavec)(1-\lambda_{W}(\betavec))\biggl(\,\sum_{j\in W} y_j\biggr)^{\!2}
		\stackrel{L.\ref{lem:lambdaW-ratios}}{\geq}\dfrac{1}{2}e^{-2\deltab}\check\varLambda \sum_{W\in \rsets} \biggl(\,\sum_{j\in W} y_j\biggr)^{\!2} \\
		&=\dfrac{1}{2}e^{-2\deltab}\check\varLambda \yvec\trans\left(\binom{n-1}{r-1}I-\binom{n-2}{r-2}I+\binom{n-2}{r-2}J\right)\yvec\\
		&= \dfrac{1}{2}e^{-2\deltab}\check\varLambda\, \biggl( \binom{n-2}{r-1} \norm{\yvec}_2^2+\binom{n-2}{r-2} \biggl(\,\sum_{j=1}^n y_j\biggr)^{\!2}\,\biggr).
	\end{align*}
	On the other hand, by Lemma~\ref{lem:A-entries-tight} we have
	\begin{align*}
		\yvec\trans (D+s s\trans) \yvec &\le \dfrac{1}{2}e^{4\deltab} \check\varLambda\, \biggl(\binom{n-1}{r-1} \norm{\yvec}_2^2+ \binom{n-2}{r-2}\biggl(\sum_{j=1}^n y_j\biggr)^{\!2}\,\biggr)\\
		&= \dfrac{1}{2}e^{4\deltab} \check\varLambda\, \biggl(\frac{n-1}{n-r}\binom{n-2}{r-1} \norm{\yvec}_2^2+ \binom{n-2}{r-2}\biggl(\sum_{j=1}^n y_j\biggr)^{\!2}\,\biggr)\\  
		&\le 2 e^{4\deltab} \check\varLambda\, \biggl( \binom{n-2}{r-1} \norm{\yvec}_2^2+\binom{n-2}{r-2} \biggl(\sum_{j=1}^n y_j\biggr)^{\!2}\,\biggr),
	\end{align*}
	where the last inequality holds as $r\le n/2$. Therefore setting
	$\gamma:= e^{-6\deltab}/4$, we have
	 for any $\yvec\in\Reals^n$ that $\yvec\trans A\yvec \ge \gamma\, \yvec\trans (D+\svec \svec\trans) \yvec$.
	Let $B$ be as in Lemma~\ref{motherL49}. Then
	\begin{equation}\label{eq:orderB}
	B=1+\frac{\Dmax \norm{\svec}_1\norm{\svec}_{\infty}}{\Dmin \norm{\svec}_2^2}=1+\frac{\Dmax}{\Dmin}\le 4 e^{8\deltab/r},
	\end{equation}
	which follows from Lemma~\ref{lem:A-entries-tight} and \eqref{eq:lower-bound-diag}.
	
	For $\kappa$ as in Lemma~\ref{motherL49}, using \eqref{eq:xmax}, \eqref{eq:lower-bound-diag} and \eqref{eq:orderB}, we have
	\begin{equation}\label{eq:kappa}
		\kappa = B^2\, D_{\min}^{-1}\, n\, \maxnorm{X}
		\le 80  e^{16\deltab/r}  
		\frac{ e^{4\deltab} r^{-1} \check\varLambda\, \binom{n-2}{r-2}}{ e^{-4\deltab/r}\check\varLambda\, \binom{n-1}{r-1}}n 
		\le 80 e^{20\deltab/r+4\deltab}.
	\end{equation}
	
	Next we consider the matrix $(D+\svec\svec\trans)^{-1}$. By Lemma~\ref{motherL49} we have
	\[
	(D+\svec\svec\trans)^{-1}=D^{-1}-\frac{D^{-1}\svec \svec\trans D^{-1}}{1+\norm{D^{-1/2}\svec}_2^2},
	\]
	and we are interested in an upper bound on the absolute value of the elements of this matrix. First consider the vector $D^{-1} \svec$ and note that
	\[
	D^{-1} \svec=\begin{pmatrix} \frac{c}{a_{11}-c^2}\\ \vdots\\ \frac{c}{a_{nn}-c^2} \end{pmatrix}.
	\] 
	Together with \eqref{eq:lower-bound-diag} this implies that every element in the matrix $D^{-1}\svec \svec\trans D^{-1}$ has absolute value at most
	\begin{equation}\label{eq:numeratorg}
		16 e^{8\deltab/r}\frac{\check\varLambda \binom{n-2}{r-2}}{\check\varLambda^2\binom{n-1}{r-1}^2}\le  16 e^{8\deltab/r}\frac{1}{\check\varLambda \binom{n}{r}}.
	\end{equation}
	Similarly
	\[
	D^{-1/2} \svec=c\, \begin{pmatrix} (a_{11}-c^2)^{-1/2}\\ \vdots\\ (a_{nn}-c^2)^{-1/2}\end{pmatrix},
	\] 
	implying that
	\[
	\norm{D^{-1/2}\svec}_2^2=c^2\sum_{j=1}^n \frac{1}{a_{jj}-c^2}\stackrel{L.\ref{lem:A-entries-tight}}{\ge} e^{-4\deltab/r}\frac{\check\varLambda \binom{n-2}{r-2}}{\check\varLambda \binom{n-1}{r-1}}n\ge \frac{r}{2} e^{-4\deltab/r},
	\]
	and hence
	\begin{equation}\label{eq:Dsnorm}
		1+\norm{D^{-1/2}\svec}_2^2 \ge  \frac{r}{2}\, e^{-4\deltab/r}.
	\end{equation}
	Therefore, by \eqref{eq:numeratorg} and \eqref{eq:Dsnorm}, every element of $\frac{D^{-1}\svec \svec\trans D^{-1}}{1+\norm{D^{-1/2}\svec}_2^2}$ has absolute value at most
	\begin{equation}\label{eq:inverse-error}
		\frac{16}{1/2}\cdot  \frac{e^{12 \deltab/r}}{r \check\varLambda \binom{n}{r}}= 32 \frac{e^{12 \deltab/r}}{\check\varLambda \binom{n-1}{r-1}n},
	\end{equation}
	and thus so do the off-diagonal elements of $(D+\svec\svec\trans)^{-1}$. As for the diagonal elements, by \eqref{eq:lower-bound-diag} and \eqref{eq:inverse-error}, each has absolute value at most
	\[
	\frac{5e^{4 \deltab/r}}{\check\varLambda \binom{n-1}{r-1}}+\frac{32 e^{12 \deltab/r}}{ \check\varLambda \binom{n-1}{r-1} n} \le \frac{8 e^{12 \deltab/r}}{\check\varLambda \binom{n-1}{r-1}},
	\]
	as $n\ge 16e^{4\deltab}\ge 16$. 
	
	Now we have all the information needed to establish a bound on the absolute value of the elements in $A(\betavec)^{-1}$ using Lemma~\ref{motherL49}(a). In particular, using \eqref{eq:lower-bound-diag}, \eqref{eq:orderB} and \eqref{eq:kappa}, the diagonal entries of $A(\betavec)^{-1}$ have absolute value at most
	\[
	 \frac{8e^{12\deltab/r}}{\check\varLambda \binom{n-1}{r-1}}+\frac{B^2\kappa(\kappa+1)}
	{\Dmin\gamma n}
	\le  \frac{8 e^{12\deltab/r}}{\check\varLambda\binom{n-1}{r-1}}+\frac{\hat{C} e^{60\deltab/r+14\deltab}}{\check\varLambda \binom{n-1}{r-1}n}
	 \le (8+\hat{C}) \frac{e^{60\deltab/r+14\deltab}}{\check\varLambda \binom{n-1}{r-1}},
	\] 
	for some sufficiently large constant $\hat{C}$.
	On the other hand, the off-diagonal entries have absolute value at most
	\[
	 \frac{32 e^{12 \deltab/r}}{\check\varLambda \binom{n-1}{r-1}n}+\frac{B^2 \kappa(\kappa+1)}
	{\Dmin\gamma n}\le \frac{32 e^{12\deltab/r}}{\check\varLambda \binom{n-1}{r-1}n}+\frac{\hat{C} e^{60\deltab/r+14\deltab}}{\check\varLambda \binom{n-1}{r-1}n}
	\le (32+\hat{C})\left( \frac{e^{60\deltab/r+14\deltab}}{\check\varLambda \binom{n-1}{r-1}n}\right).
	\]
	The first statement follows by setting $C=32+\hat{C}$ and using the fact that $r\ge 3$.
	
	Now for the second statement. Substituting \eqref{eq:lower-bound-diag}, \eqref{eq:orderB} and \eqref{eq:kappa}  into  Lemma~\ref{motherL49}(b) gives
	\[
	\norm{T}_{1},\norm{T}_{\infty}=O\left(\frac{1}{\check\varLambda^{1/2}\binom{n-1}{r-1}^{1/2}}\right),
	\]
	as required.

	Now for the last statement of the lemma. For any real $z\ge 0$, let 
	\[
	\hat{\rho}(z)=z\frac{n}{r^{1/2}} \frac
	{\Dmin^{1/2}\norm{\svec}_2}{\Dmax\norm{\svec}_1}c\rho.
	\]
Then
\[ \calQ\(\hat{\rho}(z)\) =
\biggl\{\xvec\in U_n\(\hat{\rho}(z)\): \biggl|\, \sum_{j\in [n]} x_j \biggr|\le z\, n r^{-1/2} \rho \biggr\}.
\]
	Note that 
	\begin{align*}
		\frac{n}{r^{1/2}} \frac
		{\Dmin^{1/2}\norm{\svec}_2}{\Dmax\norm{\svec}_1}c
		&=\Theta \left(\frac{n}{r^{1/2}} \frac
		{\norm{\svec}_2}{\Dmin^{1/2}\norm{\svec}_1}c \right)
		= \Theta\left(  \frac{n}{r^{1/2}} \frac{1}{ \check\varLambda^{1/2} \binom{n-1}{r-1}^{1/2}}\frac{n^{1/2} c}{n c}c\right)\\
		&= \Theta\left(\frac{n^{1/2}}{r^{1/2}} \frac{1}{ \check\varLambda^{1/2} \binom{n-1}{r-1}^{1/2}}\check\varLambda^{1/2}\binom{n-2}{r-2}^{\!1/2}\,\right)
		= \Theta\biggl(\biggl(\frac{n(r-1)}{(n-1)r}\biggr)^{\!\!1/2}\,\biggr)=\Theta (1).
	\end{align*}
	Therefore there exists $z_1=\Omega(1)$ such that $\hat{\rho}(z_1)\le \rho$ and $z_1\leq 1$.
	Together with Lemma~\ref{motherL49}, this implies that
	\[
	T\( U_n(\rho_1)\) \subseteq
	\calQ\(\hat{\rho}(z_1)\) \subseteq \calR(\rho),
	\]
	where 
	\[
	\rho_1 =  \dfrac{1}{B} \, \Dmin^{1/2}\, \gamma^{1/2}\, (1 + \kappa)^{-1} \hat{\rho}(z_1)=\Theta\biggl(\check\varLambda^{1/2}\binom{n-1}{r-1}^{\!1/2}\rho\biggr).
	\]	
	Similarly there must exist $z_2=O(1)$ such that $\hat{\rho}(z_2)\ge \rho$ and $z_2\geq 1$.
	Then by Lemma~\ref{motherL49} we have
	\[
	T\( U_n(\rho_2)\) \supseteq
	\calQ\(\hat{\rho}(z_2)\) \supseteq \calR(\rho),
	\]
	where 
	\[\rho_2 = B \Dmax^{\, 1/2}\, \gamma^{-1/2}\, (1 + \kappa)^2\hat{\rho}(z_2)
	=\Theta\biggl(\check\varLambda^{1/2}\binom{n-1}{r-1}^{\!1/2}\rho\biggr).
	\]	
This completes the proof.	
\end{proof}


\bigskip

\section{Appendix: useful identities}\label{appendix}

In this appendix we provide summations that help for the calculations in Section~\ref{s:nearreg}.
We use the notation
\[
N = \binom{n-1}{r-1} = \frac d\lambda, \quad
\W_s = \W_s(W) = \sum_{\ell\in W} \delta_\ell^s, \quad
R_s = \sum_{\ell=1}^n \delta_\ell^s
\]
and recall that $R_1=0$.
We provide approximations for some expressions, assuming that
$(r,\dvec)$ belongs to the first quadrant and $\delta_{\max} = O(d^{3/5})$.
The error bounds are good enough for our applications but are not necessarily tight.

\subsection{Summations over all $W\in\rsets$}\label{appendix:all}
\def\suma{\frac 1N\sum_{W\in\rsets}}
\begin{align*}
	\suma \W_\ell &= R_\ell\hspace*{104mm} (\ell\ge 1),  \\
	\suma \W_1\W_\ell &= {\frac {(n-r)R_{\ell+1}}{n-1}} 
\hspace*{88mm} (\ell\ge 1), \\
	\suma \W_1^3 &= {\frac {(n-r)(n-2 r)R_3}{(n-2)(n-1)}}
  = R_3 + O\(\delta_{\max}\, d^{7/5}\), \\
	\suma \W_1^4 &= {\frac {3(r-1)(n-r)(n-r-1)R_2^2}{(n-3)(n-2)(n-1)}}+{\frac {(n-r)(n^2- 6 r n+6 r^2 +n)R_4}{(n-3)(n-2)(n-1)}} \\
  &= \frac{3(r-1) R_2^2}{n} + R_4 + O\( \delta_{max}\, d^{12/5}\),\\
	\suma \W_2^2 &= {\frac {(r-1) R_2^2}{n-1}} + {\frac {(n-r)R_4}{n-1}}
         = \frac{(r-1)R_2^2}{n} + R_4 + O\(\delta_{\max}\, d^{12/5}\), \\
	\suma \W_1^2\, \W_2 &= 
    {\frac {(r-1)(n-r)R_2^2}{(n-2)(n-1)}}+{\frac {(n-r)(n-2 r)R_4}{(n-2)(n-1)}}\\
  &= \frac{(r-1)R_2^2}{n} + R_4 + O\( \delta_{\max}\, d^{12/5}\).
\end{align*}

\subsection{Summations over all $W\ni j$}\label{appendix:j}
\def\sumj{\frac 1N\sum_{W\ni j}}
\begin{align*}
	\sumj \W_\ell &= {\frac {(r-1) R_\ell}{n-1}}+{\frac {(n-r)\, \delta_j^\ell}{n-1}} \hspace*{8cm} (\ell\ge 1), \\
	\sumj \W_1 \W_\ell &= {\frac {(r-1)  (n-r)\delta_j R_\ell}{(n-2)(n-1)}}+
{\frac {(r-1)(n-r) R_{\ell+1}}{(n-2)(n-1)}}+{\frac { (n-r)(n-2 r)\delta_j^{\ell+1}}{(n-2)(n-1)}} 
\hspace*{6mm} (\ell\ge 1),\\
	\sumj \W_1^3 &= {\frac {3(r-1)  (n-r)(n-r-1)\delta_j R_2}{(n-3)(n-2)(n-1)}} +{\frac {(r-1)(n- r)(n-2 r+1)R_3}{(n-3)(n-2)(n-1)}} \\
	&{\qquad} {} +{\frac { (n-r)(n^2-6 r n+6 r^2 +n)\delta_j^3}{(n-3)(n-2)(n-1)}} \\
  &= \frac{3(r-1)\delta_j R_2 +(r-1)R_3}{n} + \delta_j^3 + O\left(\frac{d^{12/5}}{rn}\right), \\
	\sumj \W_1^4 &= {\frac {3(r-2)(r-1)(n-r)(n-r-1)R_2^2}{(n-4)(n-3)(n-2)(n-1)}} \\
	&{\qquad} +{\frac {6(r-1) (n-r)(n-r-1)(n-2 r)\delta_j^2 R_2}{(n-4)(n-3)(n-2)(n-1)}} \\
	&{\qquad} + {\frac {4(r-1) (n-r)(n-r-1)(n-2 r)\delta_j R_3}{(n-4)(n-3)(n-2)(n-1)}} \\
	&{\qquad} +{\frac {(r-1)(n-r)(n^2-6 r n+6 r^2 +5 n-6 r)R_4}{(n-4)(n-3)(n-2)(n-1)}} \\
	&{\qquad}+{\frac { (n-r)(n-2 r)(n^2-12 r n+12 r^2 +5 n)\delta_j^4}{(n-4)(n-3)(n-2)(n-1)}}\\
  &= O\left(\frac{d^{17/5}}{rn}\right). 
\end{align*}

\subsection{Summations over all $W\supset \{j,k\}$}\label{appendix:jk}
\def\sumjk{\frac 1N\sum_{W\supset \{j,k\}}}
\begin{align*}
	\sumjk \W_\ell &= {\frac {(r-2)(r-1) R_\ell}{(n- 2)(n-1)}}+{\frac {(r-1)(n-r)(\delta_j^\ell +\delta_k^\ell)}{(n-2)(n-1)}} \\
   &= \frac{(r-2)(r-1)R_\ell}{n^2} + \frac{(r-1)(\delta_j^\ell + \delta_k^\ell)}{n}
   + O\left(\frac{\delta_{\max} r\, d^{\ell-3/5}}{n^2}\right) \hspace*{1cm}(\ell\ge 1), \\
	\sumjk \W_1^2 &= {\frac {(r-2)(r-1)(n-r)R_2}{(n-3)(n-2)(n-1)}} \\
	&\quad {} +{\frac {(r-1)(n-r)\((n-2r+1)(\delta_j^2 + \delta_k^2)  +2(n-r-1) \delta_j \delta_k\)}{(n-3)(n-2)(n-1)}}\\
  &= \frac{(r-2)(r-1)R_2}{n^2} + \frac{(r-1)\( \delta_j + \delta_k\)^2}{n} 
   + O\left(\frac{\delta_{\max} r\, d^{7/5}}{n^2}\right).
\end{align*}

\subsection*{Acknowledgement}
We would like to thank the anonymous referee for their helpful comments. 

\nicebreak


\begin{thebibliography}{99}

\itemsep=0pt

\bibitem{Bishop}
Y.\,M.~Bishop, S.\,E.~Fienberg and P.\,W.~Holland,
\textit{Discrete Multivariate Analysis: Theory and Applications},
Springer, Berlin, 2007.

\bibitem{BG}
V.~Blinovsky and C.~Greenhill,
Asymptotic enumeration of sparse uniform hypergraphs with
given degrees, 
\textit{European J.~Combin.} {\bf 51} (2016), 287--296.

\bibitem{BG2}
V.~Blinovsky and C.~Greenhill,
Asymptotic enumeration of sparse uniform linear hypergraphs
with given degrees, 
\textit{Electron.\ J.~Combin.} {\bf 23(3)} (2016), \#P3.17.

\bibitem{CGM}
E.\,R.~Canfield, C.~Greenhill and B.\,D.~McKay, 
Asymptotic enumeration of dense 0-1 matrices with specified line sums, 
\emph{Journal of Combinatorial Theory (Series A)} {\bf 115} (2008), 
32--66.

\bibitem{correlation-immune}
E.\ R.~Canfield, Z.~Gao, C.~Greenhill, B.\ D.~McKay and R.\ W.~Robinson, 
Asymptotic enumeration of correlation-immune boolean functions, 
\emph{Cryptography and Communications} {\bf 2} (2010), 111--126.

\bibitem{Chat2011}
S.~Chatterjee, P.~Diaconis, and A.~Sly. Random graphs with a given degree
sequence. \emph{Ann.\ Appl.\ Probab.}, 21(4):1400--1435, (2011).

\bibitem{DFRS}
A.~Dudek, A.~Frieze, A.~Ruci{\' n}ski and M.~{\v S}ileikis,
Approximate counting of regular hypergraphs,
\textit{Inform. Process. Lett.} {\bf 113} (2013), 19--21.

\bibitem{FKM}
N.~Fountoulakis, M.~Kang and T.~Makai,
Resolution of a conjecture on majority dynamics: Rapid stabilization
in dense random graphs, \textit{Random Structures Algorithms},
\textbf{57} (2020) 1134--1156.

\bibitem{sandwichGIM}
P. Gao, M. Isaev and B.\,D. McKay,
   Sandwiching random regular graphs between binomial random graphs,
   SIAM Conference on Discrete Algorithms (SODA 2020), pp.~690--701. 

\bibitem{golubski}
A.\,J.~Golubski, E.\,E.~Westlund, J.~Vandermeer and M.~Pascual,
Ecological networks over the edge: hypergraph trait-mediated indirect
interaction (TMII) structure,
\textit{Trends in Ecology \& Evolution} {\bf 31} (2016), 344--354.

\bibitem{Higham}
N.\,J.~Higham, \textit{Functions of Matrices},
Society for Industrial and Applied Mathematics, 2008. 

\bibitem{HJ}
R.\,A.~Horn and C.\,R.~Johnson, \emph{Matrix Analysis (2nd edn.)},
Cambridge University Press, Cambridge, 2013.

\bibitem{mother}
M.~Isaev and B.\,D.~McKay,
Complex martingales and asymptotic enumeration,
\textit{Random Structures Algorithms} {\bf 52} (2018), 617--661.


\bibitem{JansonRG}
S.~Janson, T.~\L uczak and A.~Rucinski,
\textit{Random Graphs}, John Wiley \&\ Sons, New York, 2000.

\bibitem{KLW}
N.~Kam{\v c}ev, A.~Liebenau and N.~Wormald, 
Asymptotic enumeration of hypergraphs by degree sequence,
\emph{Advances in Combinatorics}, 2022:1, 33pp.

\bibitem{KS}
M.\,G.~Kendall and A.~Stuart,
\textit{The Advanced Theory of Statistics, vol.~1},
Charles Griffin \& Company, London, 1958.

\bibitem{Kuperberg}
G. Kuperberg, S. Lovett and R. Peled, Probabilistic existence of regular
combinatorial structures, \textit{Geom.\ Funct.\ Anal.} {\bf 27} (2017), 919--972.

\bibitem{LW1} 
A.~Liebenau and N.~Wormald,
Asymptotic enumeration of graphs by degree sequence, and the
degree sequence of a random graph. 
Preprint, 2017. \texttt{arXiv:1702.08373}

\bibitem{LW2}
A.~Liebenau and N.~Wormald,
Asymptotic enumeration of digraphs and bipartite graphs by degree sequence. 
\textit{Random Structures Algorithms}, to appear (2022).

\bibitem{MM}
B.\, D.~McKay and J.\ C.~McLeod, 
Asymptotic enumeration of symmetric integer matrices with uniform row sums, 
\emph{Journal of the Australian Mathematical Society} {\bf 92(3)} (2012),
367--384.

\bibitem{McKW90}
B.\,D.~McKay and N.\,C.~Wormald, Asymptotic enumeration by degree sequence of
graphs of high degree, \emph{European J.\ Combin.} {\bf 11} (1990), 565--580.

\bibitem{meyer}
C.\,D.~Meyer, \textit{Matrix Analysis and Applied Linear Algebra},
Society for Industrial and Applied Mathematics, Philadelphia, 2000.

\bibitem{MNBO}
J.\,V.~Michalowicz, J.\,M.~Nichols, F.~Bucholtz and C.\,C.~Olson,
An Isserlis' theorem for mixed Gaussian variables: application to the auto-bispectral
density, \emph{J.\ Stat.\ Phys.} {136} (2009), 89--102.

\bibitem{morimae}
T.~Morimae, Y.~Takeuchi and M.~Hayashi,
Verification of hypergraph states, \textit{Phys. Rev. A} {\bf 96} (2017), 062321.

\bibitem{PP}
 R.~Pemantle and Y.~Peres,
 Concentration of Lipschitz functionals of determinantal and other
 strong Rayleigh measures,
 \textit{Combin. Prob. Comput.}, \textbf{23} (2014) 140--160.

\bibitem{purkait}
P.~Purkait, T.-J.~Chin, A.~Sadri and D.~Suter, Clustering with
hypergraphs: the case for large hyperedges,
\textit{IEEE Transactions on Pattern Analysis and
Machine Learning} {\bf 39} (2017), 1697--1711.

\bibitem{stasi}
D.~Stasi, K.~Sadeghi, A.~Rinaldo, S.~Petrovi{\' c} and S.\,E.~Fienberg, 
$\beta$-models for random hypergraphs with a given degree sequence,
in \textit{Proceedings of the 21st International Conference on Computational
Statistics (COMPSTAT 2014)} (M.~Gilli, G.~Gonzalez-Rodriguez and A.~Nieto-Reyes,
eds.), Curran Associates, New York, 2014, pp.~593--600.

\bibitem{VM}
 V.\,A.~Vatutin and V.\,G.~Mikhailov,
 Limit theorems for the number of empty cells in an equiprobable
 scheme for group allocation of particles,
 \textit{Theory Probab. Appl.}, \textbf{27} (1982) 734--743.


\bibitem{W-survey}
N.\,C.~Wormald, Asymptotic enumeration of graphs with given degree sequence,
in \textit{Proceedings of the International Congress of Mathematicians, ICM 2018} 
(B.~Sirakov, P.\,N.~de Souza and M.~Viana, eds.), vol.~4,
World Scientific, pp.~3263--3282.

\bibitem{Zhan2002}
X. Zhan, \textit{Matrix Inequalities}, Lecture Notes in Mathematics, Vol. 1790.
Springer, Berlin, 2002.

\end{thebibliography}
\end{document}